\def\ST{\songti\rm\relax}
\def\R{{\mathbb R}}
\def\ST{{\rm s.t.}}
\crefname{hypothesis}{Hypothesis}{Hypotheses}
\title{On Local Minimizers of Quadratically Constrained Nonconvex Homogeneous Quadratic Optimization with at Most Two Constraints\thanks{Submitted to the editors DATE.
\funding{This research was supported by
the Beijing Natural Science Foundation under grant Z180005, and
the National Natural Science Foundation of China under grants 11822103 and 11771056.}}}
\author{Mengmeng Song\thanks{LMIB of the Ministry of Education, School of Mathematical Sciences, Beihang University, Beijing 100191, People's Republic of China,
(\email{songmengmeng@buaa.edu.cn},\ \email{liuhongying@buaa.edu.cn}, \email{yxia@buaa.edu.cn}).}
\and Hongying Liu\footnotemark[2]
\and Yong Xia\footnotemark[2]
\thanks{Corresponding author. }
}
\DeclareMathOperator{\diag}{diag}
\begin{document}

\maketitle

\begin{abstract}
We study nonconvex homogeneous quadratically constrained quadratic optimization with one or two constraints, denoted by (QQ1) and (QQ2), respectively. (QQ2) contains (QQ1), trust region subproblem (TRS) and ellipsoid regularized total least squares problem as special cases. It is known that there is a necessary and sufficient optimality condition for the global minimizer of (QQ2).
In this paper, we first show that any local minimizer of (QQ1) is globally optimal. Unlike its special case (TRS) with at most one local non-global minimizer, (QQ2) may have infinitely many local non-global minimizers. At any local non-global minimizer of (QQ2), both linearly independent constraint qualification and strict complementary condition hold, and the Hessian of the Lagrangian has exactly one negative eigenvalue. As a main contribution, we prove that the standard second-order sufficient optimality condition for any strict local non-global minimizer of (QQ2) remains necessary. Applications and the impossibility of further extension are discussed.
\end{abstract}

\begin{keywords}
{quadratically constrained quadratic optimization, optimality condition, local minimizer, trust region subproblem, total least squares}
\end{keywords}

\begin{AMS}
  90C46, 90C26, 90C20, 90C32
\end{AMS}

\section{Introduction}
We study the following quadratically constrained quadratic optimization with  two constraints: \begin{equation}\label{eq:HQCQP_2}\tag{QQ2}
\min_{x\in\R^n}\{q_0(x):~q_1(x)=1,~q_2(x)\le 1\},
\end{equation}
where $q_i(x)=x^TA_ix$ with symmetric $A_i\in \R^{n\times n}$ for $i=0,1,2$. The special case when $A_2\equiv 0$ is denoted by (QQ1). For non-triviality, we assume $n\ge3$ for \eqref{eq:HQCQP_2}.

Problem \eqref{eq:HQCQP_2} has some geometrical applications. Let $E_i=\{x:~q_i(x)\le 1\}$ ($i=1,2$) be two ellipsoids with a common center $0$.  The problem of checking whether an ellipsoid centered at $0$ covers $E_1\cap E_2$ or $E_1+ E_2$ \cite{Polyak98}, and the problem of evaluating the Chebyshev radius of  $E_1\cap E_2$ \cite{Henrion2001} can be formulated as a quadratic optimization over   $\{x:~q_1(x)+z^2= 1,~q_2(x)\le 1\}$, where $z$ is an additional univariate variable.

Problem \eqref{eq:HQCQP_2} contains some well-known special cases. First, (QQ1) itself is a generalized eigenvalue problem. The second one is the celebrated trust region subproblem with the input symmetric matrix $Q\in\R^{n\times n}$ and vector $b\in\R^n$:
\begin{equation}\label{eq:trs}\tag{TRS}
\min_{x\in\R^n}\{x^TQx+2b^Tx:~x^Tx\le 1\},
\end{equation}
which plays a great role in trust region algorithm for solving nonlinear programming problems \cite{yuan15}.
(TRS) can be homogenized as the following case of \eqref{eq:HQCQP_2}:
\begin{equation}\label{eq:trs_hcdt}
\min_{(x,z)\in\R^{n+1}}\{x^TQx+2b^Txz:~ x^Tx\le 1,~z^2= 1\},
\end{equation}
since both $(x,z)$ and $(-x,-z)$ share the same objective function value.

Another subproblem in nonlinear programming is the generalized Celis-Dennis-Tapia (CDT) subproblem \cite{CDT85}:
\begin{equation}\label{eq:gcdt}\tag{GCDT}
\min\{x^TQ_0x+2b_0^Tx:~x^TQ_1x+2b_1^Tx+c_1\le0,~ x^TQ_2x+2b_2^Tx+c_2\le0\},
\end{equation}
where $b_0,b_1,b_2\in\R^n, c_1,c_2\in\R$, and $Q_i\in\R^{n\times n}$ are symmetric for $i=0,1,2$. We notice that the classical (CDT) requires both $Q_1$ and $Q_2$ to be positive definite. Homogeneous \eqref{eq:gcdt} with $b_0=b_1=b_2=0$ and $c_1, c_2<0$
can be reformulated as
problem \eqref{eq:HQCQP_2}  by rewriting $x^TQ_1x+c_1\le0$ as
$-x^TQ_1x/c_1+z^2=1$ with an additionally introduced univariate variable $z$.

There are some quadratic fractional programming problems that can be reformulated as (QQ1) or (QQ2), for example,  the Rayleigh quotient problem
	\begin{equation}\label{eq:fhtrs}\tag{RQ}
	\min\limits_{0\neq x\in \R^n}   \frac{x^TA_0x}{x^TA_1x}\\
\end{equation} with $A_1$ being positive definite,
the total least squares problem \cite{Beck06}
\begin{equation}\label{eq:TLS1}\tag{TLS}
\min_{E\in\R^{m\times n},r\in\R^m,x\in\R^n} \left\{\|E\|_F^2+\|r\|^2:~(A+E)x=b+r
\right\}= \min_{x\in\R^n}\  \frac{{\|Ax-b\|}^2}{{\|x\|}^2+1},
\end{equation}
and the ellipsoid regularized total least squares problem \cite{Beck06,Beck10}
\begin{equation}\label{eq:TLS}\tag{ETLS}
\min_{x\in\R^n}\left\{\frac{{\|Ax-b\|}^2}{{\|x\|}^2+1}:~
{\|Lx\|}^2\le\rho\right\},
\end{equation}
where $\|\cdot\|_F$ and $\|\cdot\|$ denote the Frobenius norm and Euclidean norm for matrices and vectors, respectively,  $L\in\R^{r\times n}$ is of full row-rank and $\rho$ is a positive parameter,
see more details in Sections \ref{sec:qc1qp} and
 \ref{sec:cytjygjx} of this paper.

Global minimizers of \eqref{eq:HQCQP_2} have been fully characterized in literature. In the early 1980s, necessary and sufficient conditions for global minimizers of \eqref{eq:trs}, a special case of \eqref{eq:HQCQP_2}, were established by Gay \cite{Gay81}, Sorensen \cite{Sorensen82}, and Mor\'{e} and Sorensen \cite{More83}. In 1998, based on the convexity of the joint numerical range
\begin{equation}
\left\{(x^TA_0x,x^TA_1x,x^TA_2x):x\in\R^n\right\}, \label{jointNR}
\end{equation}
Polyak \cite{Polyak98} established the necessary and sufficient condition for the global minimizer of \eqref{eq:HQCQP_2} for the first time. In 2003, Ye and Zhang \cite{Ye03} proved that the semidefinite programming relaxation of \eqref{eq:HQCQP_2} is tight based on the rank-one decomposition approach. Recently, Wang and Xia \cite{Wang19} showed that \eqref{eq:HQCQP_2} with $A_1=I$  can be globally solved by bisection in linear time with respect to the number of nonzero elements of $A_0$ and $A_2$. However, the situations for (CDT) are more complicated.
Though (CDT) can be globally solved in polynomial time \cite{Bienstock16,Consolini17,Sakaue16}, to the best of our knowledge, there is no necessary and sufficient condition for the global minimizer. For sufficient conditions, Ai and Zhang \cite{Ai08} identified cases of (CDT) when and only when strong duality holds. For necessary conditions, if the Lagrangian multiplier is unique at the global optimum of (CDT), Yuan \cite{Yuan90} proved that the Hessian of the corresponding Lagrangian has at most one negative eigenvalue, otherwise there exists a Lagrangian multiplier such that the Hessian of the Lagrangian is positive semidefinite \cite{Li2006}. As an extension of \eqref{eq:HQCQP_2}, Peng and Yuan \cite{Peng97} showed that the Hessian of the Lagrangian at the global minimizer of \eqref{eq:gcdt} has at most one negative eigenvalue if the Jacobian of the constraints is nonzero.

Besides the standard  optimality conditions, there are more intensive characterizations on the
local minimizers of variants and generalizations of \eqref{eq:HQCQP_2}.
Chen and Yuan \cite{chen1999} studied the distribution of local minimizers of (CDT). Peng and Yuan \cite{Peng97} gave delicate necessary optimality conditions for local minimizers of \eqref{eq:gcdt}. We focus on
local non-global minimizers, which play a great role in globally solving problems with additional linear/ball constraints \cite{Beck17,Bin14,Hs13}.
For \eqref{eq:trs}, the first detailed characterization of the local non-global minimizer is due to Mart\'{i}nez \cite{JOSE94}. It was proved that \eqref{eq:trs} has at most one local non-global minimizer, at which both necessary and sufficient optimality conditions were established. Recently, Wang and Xia \cite{Wang20} closed the gap between these two optimality conditions by proving that the second-order sufficient optimality condition remains necessary.

Our goal in this paper is to present an in-depth study on optimality conditions of local non-global minimizers of  \eqref{eq:HQCQP_2}. Unlike its special case \eqref{eq:trs}, constraint qualification does not hold for all non-interior solutions, and there may be infinitely many local non-global minimizers for \eqref{eq:HQCQP_2}. In addition, projection onto the intersection of two quadratic surfaces no longer has a closed-form solution. All these difficulties make the extension from \eqref{eq:trs} to \eqref{eq:HQCQP_2} nontrivial.
We list in the following the main contributions of this paper:
\begin{enumerate}
	\item[-] The feasible region of \eqref{eq:HQCQP_2} is compact if and only if there is a $\mu\in\R$ such that $\mu A_1+A_2$ is positive definite. (Section \ref{s:p})
	\item[-] Any local minimizer of problem (QQ1) is globally optimal. As  applications, both Rayleigh quotient problem and total least squares problem have no local non-global minimizer. (Section \ref{sec:qc1qp})
	\item[-] The linear independent constraint qualification (LICQ) holds at any local non-global minimizer of \eqref{eq:HQCQP_2}.  (Section \ref{s3.2})
	\item[-] At any local non-global minimizer of \eqref{eq:HQCQP_2}, strict complementary condition holds and
the Hessian of the Lagrangian  has exactly one negative eigenvalue. (Section \ref{s4.1})	
	\item[-] The standard second-order sufficient condition is proved to be necessary at any strict local non-global minimizer of \eqref{eq:HQCQP_2}. As an application, any strict local non-global minimizer of \eqref{eq:TLS} enjoys a necessary and sufficient optimality condition.  (Sections \ref{sec:cytjygjx} and \ref{sec43})
\item[-] There exists a strict local non-global minimizer of \eqref{eq:gcdt} and a strict global minimizer of \eqref{eq:HQCQP_2} at which
the standard second-order sufficient condition is no longer  necessary.  (Section \ref{sec:AppDis})
\item[-] Problem \eqref{eq:HQCQP_2} may have a non-strict local non-global minimizer, and hence there
may be infinitely many local non-global minimizers.	(Section \ref{sec:AppDis})
\end{enumerate}
Peng and Yuan \cite{Peng97} raised an open question whether there are sufficient conditions that are weaker than the standard second-order sufficient optimality condition for \eqref{eq:gcdt}. Our result in Section \ref{sec:cytjygjx} gives a negative answer to Peng and Yuan's open question for any strict local non-global minimizer of the homogeneous \eqref{eq:gcdt}.

%

{\bf Notations.} Let $I$ be the identity matrix of proper dimension. $A\succ(\succeq)$ denotes that $A$ is positive (semi)definite. The range space of $A$ is denoted by ${\rm range}(A)$, $\lambda_{\min}(A)$ is the minimal eigenvalue of $A$. Let ${\rm span}\{v_1, v_2, \cdots, v_k\}$ be the subspace consisting of all linear combinations of the vectors $v_1, v_2, \cdots, v_k$.
Denote the diagonal matrix with diagonal components $a_1, a_2,\cdots,a_n$ by $\diag(a_1, a_2,\cdots,a_n)$.
For any smooth vector-valued function $g:\R^n\rightarrow\R^m\ (m\ge1)$, $g',~g'',$ and $g'''$ denote the first, second, and third derivatives of $g$, respectively.


\section{Preliminaries}\label{s:p}
In this section, we present some known results which will be used in the later analysis. A new observation in this section is that the feasible region of \eqref{eq:HQCQP_2} is compact if and only if there is a $\mu\in\R$ such that $\mu A_1+A_2\succ 0$.

\begin{lemma}{\rm (\cite[Lemma 3.10]{Peng97})}\label{le:peng}   If $C\in\R^{n\times n}$ is a symmetric indefinite matrix, then ${\rm span}\{v:\ v^TCv=0\}=\R^n$.
\end{lemma}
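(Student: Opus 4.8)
The plan is to reduce the claim to a statement about eigenvectors by diagonalizing $C$, and then to exploit indefiniteness by building isotropic vectors out of pairs of eigenvectors whose eigenvalues have opposite signs. Since $C$ is symmetric, write $C=Q\Lambda Q^T$ with $Q$ orthogonal and $\Lambda=\diag(\lambda_1,\ldots,\lambda_n)$, and let $e_1,\ldots,e_n$ be the associated orthonormal eigenvectors, which form a basis of $\R^n$. Writing $S=\{v:\ v^TCv=0\}$, it suffices to prove that every $e_i$ lies in ${\rm span}(S)$, since then ${\rm span}(S)\supseteq {\rm span}\{e_1,\ldots,e_n\}=\R^n$ while the reverse inclusion is trivial.

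First I would dispose of the degenerate directions: if $\lambda_i=0$ then $e_i^TCe_i=\lambda_i=0$, so $e_i\in S$ directly. The core of the argument concerns the nonzero eigenvalues. Because $C$ is indefinite, there exist indices with $\lambda_i>0$ and with $\lambda_j<0$. For such a pair, consider $u_\pm=\sqrt{|\lambda_j|}\,e_i\pm\sqrt{\lambda_i}\,e_j$; using orthonormality and $e_i^TCe_k=\lambda_k\,e_i^Te_k$, a direct computation gives $u_\pm^TCu_\pm=|\lambda_j|\lambda_i+\lambda_i\lambda_j=0$, so $u_+,u_-\in S$. Then $u_++u_-=2\sqrt{|\lambda_j|}\,e_i$ and $u_+-u_-=2\sqrt{\lambda_i}\,e_j$ exhibit both $e_i$ and $e_j$ as linear combinations of elements of $S$, hence both lie in ${\rm span}(S)$.

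To finish, I would fix one positive index and one negative index (each guaranteed to exist by indefiniteness) and pair every remaining eigenvector with nonzero eigenvalue against one of opposite sign via the construction above, concluding that all eigenvectors with $\lambda_i\neq0$ lie in ${\rm span}(S)$. Combined with the zero-eigenvalue case, this places the whole basis $\{e_1,\ldots,e_n\}$ in ${\rm span}(S)$, which yields ${\rm span}(S)=\R^n$ and proves the lemma.

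The argument is essentially elementary, so I do not expect a substantive obstacle; the only genuine content is the pairing idea that converts indefiniteness into explicit isotropic vectors. The point demanding care is the bookkeeping over the three sign classes of eigenvalues, namely handling the zero eigenvalues separately and ensuring it is indefiniteness, rather than mere failure of definiteness, that supplies the opposite-sign partner required for each nonzero eigenvector.
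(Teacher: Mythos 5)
Your argument is correct and complete: the diagonalization, the vanishing of the cross term $e_i^TCe_j$ by orthogonality, the cancellation $|\lambda_j|\lambda_i+\lambda_i\lambda_j=0$ for an opposite-sign pair, and the recovery of $e_i,e_j$ from $u_+\pm u_-$ all check out, and the pairing of every nonzero eigenvalue against a fixed one of opposite sign (which indefiniteness guarantees exists) covers the whole basis. Note, however, that the paper itself gives no proof of this statement --- it is quoted verbatim as Lemma 3.10 of Peng and Yuan's paper and used as a black box --- so there is no in-paper argument to compare against; your proof is the standard elementary one and would serve as a self-contained justification. The only stylistic remark is that the zero-eigenvalue case could be folded into the general scheme (any isotropic $e_i$ is already in $S$), but separating it out as you do is perfectly clean.
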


\begin{lemma}[Finsler's lemma, \cite{Finsler36}] \label{thm:F2}
	Let $A,B$ be symmetric matrices in $\R^{n\times n}$.  The following two statements are equivalent.
\begin{enumerate}
	\item[(i)]  $x\neq 0,~x^TBx = 0~\Longrightarrow~x^TAx > 0.$
	\item[(ii)] There is a $\mu\in \mathbb{R}$ such that
	$A + \mu B\succ 0$.
\end{enumerate}
\end{lemma}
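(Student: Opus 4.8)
The plan is to treat the two implications separately, since (ii)$\Rightarrow$(i) is immediate while (i)$\Rightarrow$(ii) carries all the content. For (ii)$\Rightarrow$(i) I need only a one-line substitution: if $A+\mu B\succ 0$ and $x\neq 0$ satisfies $x^TBx=0$, then $x^TAx=x^T(A+\mu B)x-\mu\,x^TBx=x^T(A+\mu B)x>0$, which is exactly (i). For the hard direction I would recast (ii) as the existence of a $\mu$ with $\phi(\mu):=\lambda_{\min}(A+\mu B)>0$, and exploit that $\phi(\mu)=\min_{\|x\|=1}\bigl(x^TAx+\mu\,x^TBx\bigr)$ is a pointwise infimum of functions that are affine in $\mu$, hence \emph{concave} and continuous on $\R$. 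The entire problem then reduces to locating a single point where $\phi$ is positive.

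My principal case is $B$ indefinite. Here $\phi(\mu)\to-\infty$ as $\mu\to+\infty$ (evaluate at a fixed $x$ with $x^TBx<0$) and as $\mu\to-\infty$ (use some $x$ with $x^TBx>0$), so the concave function $\phi$ attains its maximum at a finite $\mu^\ast$. At this maximizer I would extract a minimizing unit vector $z$ of $A+\mu^\ast B$ with $z^TBz=0$, as follows: by the envelope formula for the minimum of a family affine in $\mu$, the one-sided derivatives of $\phi$ at $\mu^\ast$ equal the maximum and the minimum of $x^TBx$ over the $\lambda_{\min}$-eigenspace of $A+\mu^\ast B$ on the unit sphere, and the first-order condition for a concave maximum places $0$ between these two extreme values. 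Since that eigenspace-sphere is connected when its dimension is at least $2$ (and reduces to an antipodal pair $\{\pm v\}$, forcing $v^TBv=0$, when the eigenspace is a line), an intermediate-value argument produces a unit $z$ there with $z^TBz=0$. Then $z\neq 0$ and $z^TBz=0$, so hypothesis (i) gives $\phi(\mu^\ast)=z^TAz+\mu^\ast z^TBz=z^TAz>0$, which proves (ii).

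The step I expect to be the real obstacle is the \emph{semidefinite} case, where $\phi$ need not attain its supremum and the eigenvector argument above breaks down. If $B\succ 0$ (or $B\prec 0$) then (i) is vacuous and $A+\mu B\succ 0$ for all sufficiently large $\mu$ (resp.\ $-\mu$). If $B\succeq 0$ is singular, then $x^TBx=0$ forces $x\in\ker B$, so (i) states precisely that $A$ is positive definite on $\ker B$, i.e.\ $m:=\min_{x\in\ker B,\ \|x\|=1}x^TAx>0$; a compactness argument then shows that any minimizers $x_\mu$ must satisfy $x_\mu^TBx_\mu\to 0$ and hence accumulate in $\ker B$, giving $\phi(\mu)\to m>0$ as $\mu\to+\infty$, and the case $B\preceq 0$ is symmetric via $\mu\to-\infty$. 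An alternative and more uniform route I would keep in reserve is to invoke convexity of the joint range $\{(x^TAx,x^TBx):x\in\R^n\}$ exploited elsewhere in this paper, strictly separate the compact convex image of the unit sphere from the ray $\{(a,0):a\le 0\}$ that (i) keeps disjoint from it, and check that the separating functional has positive first coordinate so that, after normalization, it takes the form $(1,\mu)$ and yields (ii) at once.
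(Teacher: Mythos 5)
The paper does not prove this lemma at all: it is quoted as a classical result with a citation to Finsler, so there is no in-paper argument to compare against. Your proof is correct and self-contained, and it follows the standard modern route: the trivial substitution for (ii)$\Rightarrow$(i), and for (i)$\Rightarrow$(ii) the concavity of $\phi(\mu)=\lambda_{\min}(A+\mu B)=\min_{\|x\|=1}(x^TAx+\mu x^TBx)$ as a pointwise minimum of affine functions. The case analysis is exhaustive ($B$ indefinite; $B\succeq 0$, definite or singular; $B\preceq 0$ symmetrically), and the two delicate steps are handled correctly: in the indefinite case, Danskin's formula for the one-sided derivatives at the finite maximizer $\mu^\ast$ plus connectedness of the unit sphere of the $\lambda_{\min}$-eigenspace (with the antipodal-pair degenerate case when that eigenspace is a line) produces a unit $z$ with $z^TBz=0$, whence $\phi(\mu^\ast)=z^TAz>0$; in the singular semidefinite case, monotonicity of $\phi$ and the compactness argument showing minimizers accumulate in $\ker B$ give $\phi(\mu)\to m>0$. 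One caution about the ``reserve'' alternative: the image of the \emph{unit sphere} under $x\mapsto(x^TAx,x^TBx)$ is guaranteed convex only for $n\ge 3$ (Brickman's theorem; for $n=2$ it can be a non-convex curve), so that separation argument would need to work with the full Dines cone and address its closedness, which is more delicate than you indicate. Since that route is explicitly a backup and your primary argument covers all $n$, this does not affect the validity of the proof.
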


\begin{lemma}{\rm (S-lemma, \cite{Yakubovich71}, \cite[Theorem 2.2]{Polik07})} \label{le:S-Lemma}
Let $q,c : \R^n\rightarrow \R$ be quadratic functions with $c(\bar x) < 0$ for some $\bar x \in \R^n$. The following two statements are equivalent.
\begin{enumerate}
	\item[(i)]  $c(x)\le0~\Longrightarrow~ q(x) \ge 0.$
	\item[(ii)] There is a $\beta\ge0$ such that
	$q(x) +\beta c(x)\ge 0,\forall x\in\R^n$.
\end{enumerate}
\end{lemma}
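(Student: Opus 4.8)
The reverse implication (ii)$\Rightarrow$(i) is immediate and I would dispose of it first: if some $\beta\ge 0$ satisfies $q(x)+\beta c(x)\ge 0$ for all $x$, then every $x$ with $c(x)\le 0$ obeys $q(x)\ge-\beta c(x)\ge 0$. The entire content therefore lies in (i)$\Rightarrow$(ii), and the plan is to manufacture the multiplier $\beta$ by separating, in the plane $\R^2$, the image of the pair $(q,c)$ from the region that (i) forbids. The engine is the convexity of the joint image of two quadratic forms (Dines's theorem), the same kind of convexity that underlies the joint-numerical-range statement \eqref{jointNR}. Since $q,c$ may carry linear and constant terms, I would homogenize: with $\tilde y=(x;t)\in\R^{n+1}$, pick homogeneous quadratic forms $Q,C$ on $\R^{n+1}$ for which $Q(x;1)=q(x)$ and $C(x;1)=c(x)$, and set $D=\{(Q(\tilde y),C(\tilde y)):\tilde y\in\R^{n+1}\}$. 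By Dines's theorem $D$ is a convex cone, whose $t=1$ slice is exactly $G=\{(q(x),c(x)):x\in\R^n\}$.

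Next I would turn (i) into disjointness and separate. Condition (i) says no $x$ has both $c(x)\le 0$ and $q(x)<0$, so $G$ misses the open third quadrant $O=\{(u,v):u<0,\,v<0\}$; a short scaling argument (letting $\|\tilde y\|\to\infty$ along a recession direction) extends this to the $t=0$ part of the cone, giving $D\cap O=\varnothing$. The separating hyperplane theorem then produces a nonzero $(\lambda_0,\lambda_1)$ and a $\gamma$ with $\lambda_0 u+\lambda_1 v\ge\gamma$ on $D$ and $\le\gamma$ on $O$. Because $O$ recedes to $-\infty$ in both coordinates, the second inequality forces $\lambda_0\ge 0$, $\lambda_1\ge 0$, and $\gamma\ge 0$; restricting the first to the $t=1$ slice yields $\lambda_0 q(x)+\lambda_1 c(x)\ge 0$ for all $x$.

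The Slater point finally upgrades this to (ii). Evaluating at $\bar x$ with $c(\bar x)<0$: were $\lambda_0=0$, then $\lambda_1 c(\bar x)\ge 0$ together with $\lambda_1\ge 0$ and $c(\bar x)<0$ would force $\lambda_1=0$, contradicting $(\lambda_0,\lambda_1)\ne 0$. Hence $\lambda_0>0$, and $\beta:=\lambda_1/\lambda_0\ge 0$ gives $q(x)+\beta c(x)\ge 0$, which is (ii). The step I expect to be the main obstacle is the convexity input itself---establishing Dines's theorem and making the homogenization rigorous so that the recession ($t=0$) directions cannot break the separation; the Slater hypothesis $c(\bar x)<0$ does exactly one job here, namely to rule out the degenerate vertical separator $\lambda_0=0$, without which only the weaker, homogeneous conclusion would survive.
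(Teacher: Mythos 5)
Your argument is correct, but note that the paper does not prove this lemma at all: it is quoted verbatim as a classical result with citations to Yakubovich and to P\'olik--Terlaky, so there is no in-paper proof to compare against. What you have written is essentially the standard proof found in those references: homogenize $(q,c)$ to forms $(Q,C)$ on $\R^{n+1}$, invoke Dines's theorem (which the paper itself cites elsewhere, for the hidden convexity of (QQ1)) to get convexity of the joint range $D$, check $D$ misses the open third quadrant $O$, separate, read off the signs of the multipliers from the recession of $O$, and use the Slater point $c(\bar x)<0$ to exclude $\lambda_0=0$. The two points that need care are both handled adequately: the $t=0$ slice of $D$ consists of the leading forms $(q_2(x),c_2(x))$, and your scaling argument (or simply continuity, since these are limits of points $t^2(q(x/t),c(x/t))\in\mathrm{cone}(G)$ and the complement of the open set $O$ is closed) keeps them out of $O$; and the separation of the convex cone $D$ from the open convex cone $O$ can indeed be taken through the origin, which is what lets you drop $\gamma$ and restrict to the $t=1$ slice. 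So the proposal is a sound, self-contained proof of a statement the paper treats as a black box.
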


\begin{theorem}{\rm (Necessary and sufficient condition for the global minimizer of the generalized (TRS), \cite[Theorem 3.2]{More93})}\label{th:More}
	Let $q,c : \R^n\rightarrow \R$ be quadratic functions. Assume $\nabla^2c\neq 0$ and
	\begin{equation}\label{eq:cx}
	\inf\{c(x):~x\in\R^n\}<0<\sup\{c(x):~x\in\R^n\}.
	\end{equation}
A vector $x^*$ is a global minimizer of the problem
$\min\{q(x):~c(x)=0\}$
	if and only if $c(x^*)=0$ and there is a Lagrangian multiplier $\alpha\in\R$ such that the Karush-Kuhn-Tucker (KKT) condition
	$\nabla q(x^*)+\alpha\nabla c(x^*)=0$
	holds and
$\nabla^2 q(x^*)+\alpha\nabla^2 c(x^*)\succeq 0$.
\end{theorem}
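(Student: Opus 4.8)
The plan is to prove the two directions separately: sufficiency is an elementary consequence of convexity of the Lagrangian, while necessity is the real content and amounts to a hidden-convexity (strong duality) statement for a single quadratic equality constraint. Write $q(x)=x^TQx+2g^Tx+\gamma$ and $c(x)=x^TCx+2h^Tx+\eta$; since $c$ is continuous and, by \eqref{eq:cx}, changes sign, the feasible set $\{x:c(x)=0\}$ is nonempty.

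For sufficiency, suppose $c(x^*)=0$, $\nabla q(x^*)+\alpha\nabla c(x^*)=0$, and $\nabla^2 q(x^*)+\alpha\nabla^2 c(x^*)\succeq0$ for some $\alpha$. The Lagrangian $L(x)=q(x)+\alpha c(x)$ is then a convex quadratic with $\nabla L(x^*)=0$, so $x^*$ is an unconstrained global minimizer of $L$. Consequently, for every feasible $x$ we obtain $q(x)=L(x)\ge L(x^*)=q(x^*)+\alpha c(x^*)=q(x^*)$, proving global optimality. This direction uses neither \eqref{eq:cx} nor any constraint qualification.

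For necessity, let $x^*$ be a global minimizer and set $q^*=q(x^*)$, so that global optimality reads $c(x)=0\Rightarrow q(x)-q^*\ge0$. The crux is to upgrade this conditional inequality to the unconstrained one: there exists $\alpha\in\R$ with $q(x)-q^*+\alpha c(x)\ge0$ for all $x\in\R^n$. Granting this, the quadratic $L(x)=q(x)+\alpha c(x)$ satisfies $L(x)\ge q^*=L(x^*)$ everywhere and attains its minimum at $x^*$; a quadratic that attains a finite minimum necessarily has positive semidefinite Hessian (otherwise it decreases without bound along a direction of negative curvature) and vanishing gradient at the minimizer, which are precisely $\nabla^2 q(x^*)+\alpha\nabla^2 c(x^*)\succeq0$ and the KKT condition $\nabla q(x^*)+\alpha\nabla c(x^*)=0$.

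The main obstacle is thus the equality version of the S-lemma, which does not follow from the inequality S-lemma (Lemma~\ref{le:S-Lemma}) directly, because $\{c=0\}$ is only a boundary. I would prove it by homogenization and joint-range convexity: set $y=(x,t)\in\R^{n+1}$ and introduce the homogeneous forms $\hat q(y)=x^TQx+2tg^Tx+(\gamma-q^*)t^2$ and $\hat c(y)=x^TCx+2th^Tx+\eta t^2$, whose $t=1$ slice recovers the affine functions. By Dines' theorem the joint range $D=\{(\hat q(y),\hat c(y)):y\in\R^{n+1}\}$ is a convex cone in $\R^2$, and the sought multiplier is exactly a line through the origin putting $D$ on the side $\{u+\alpha v\ge0\}$, separating $D$ from the open ray $\{(u,0):u<0\}$. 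Assumption \eqref{eq:cx} enters twice and is indispensable: it makes $\hat c$ indefinite, so $D$ meets both half-planes $\{v>0\}$ and $\{v<0\}$, which rules out a \emph{vertical} separator and keeps $\alpha$ finite; and it is the tool, via boundedness of $q$ on $\{c=0\}$, for verifying the implication along the recession directions $t=0$, where it reads $x^TCx=0\Rightarrow x^TQx\ge0$ and where Finsler's lemma (Lemma~\ref{thm:F2}) and Lemma~\ref{le:peng} are the natural instruments. Establishing this $t=0$ behaviour is the delicate step I expect to require the most care.
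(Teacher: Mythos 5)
The paper does not prove this statement at all: it is quoted verbatim from Mor\'e \cite[Theorem 3.2]{More93} and used as a black box, so there is no in-paper argument to compare against. Your proposal is a genuine proof plan, and it is the ``hidden convexity'' route: homogenize, invoke Dines' theorem for the convexity of the joint range of two quadratic forms, separate the resulting convex cone from the open ray $\{(u,0):u<0\}$, and use indefiniteness of $\hat c$ (forced by \eqref{eq:cx}) to rule out a vertical separating line. This is essentially the modern proof of the S-lemma with equality (Xia--Wang--Sheu) rather than Mor\'e's original argument, which proceeds more analytically through properties of the multiplier set and of the parametric family $q+\lambda c$. What your route buys is conceptual clarity and a clean division of labor (sufficiency is trivial convexity; necessity is one separation argument); what it costs is that all the difficulty is pushed into verifying the homogenized implication on the slice $t=0$. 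Your sufficiency direction and the separation/normalization step ($\mu>0$ because $\hat c$ is indefinite) are both correct as written.

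The one place where the proposal is not yet a proof is exactly the step you flag: showing that global optimality forces the recession condition $x^TCx=0\Rightarrow x^TQx\ge 0$. Density of $\{t\neq0\}$ is not enough (the image of $\{t\neq 0\}$ could avoid the bad ray while its closure does not), so this must be argued directly, and the tools you name for it --- Finsler's lemma and Lemma~\ref{le:peng} --- are not the right instruments: Finsler's lemma converts such an implication \emph{into} a multiplier statement, whereas here you must \emph{establish} the implication. The correct argument is an explicit unboundedness construction: if $x_0\neq0$ satisfies $x_0^TCx_0=0$ and $x_0^TQx_0<0$, take a feasible $\bar x$ and move along $\bar x+sx_0+\tau(s)w$; if $\nabla c(\bar x)^Tx_0=0$ the ray stays feasible and $q\to-\infty$; otherwise choose $w$ with $x_0^TCw\neq0$ (correction $\tau(s)$ stays bounded) or, when $Cx_0=0$, choose $w$ with $w^TCw\neq0$ (here $\nabla^2c\neq0$ is used, and $\tau(s)=O(\sqrt{s})$), so that in every case $q$ behaves like $s^2\,x_0^TQx_0\to-\infty$ on the feasible set, contradicting boundedness below. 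With that construction supplied, your plan goes through.
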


\begin{theorem}{\rm (Necessary and sufficient condition for the global minimizer of \eqref{eq:HQCQP_2}, \cite[Theorem 6.1]{Polyak98})}\label{le:HCDT_g2}
Suppose that $n\ge 3$ and
\begin{align}
\exists \mu\in \R^2:&~ \mu_1A_1+\mu_2A_2\succ 0, \label{eq:PolyakA2}\tag{C1}\\
\exists x^0\in \R^n:&~ q_1(x^0)=1, q_2(x^0)<1. \label{eq:PolyakA1}\tag{C2}
\end{align}
Then $x^*$ is a global minimizer of \eqref{eq:HQCQP_2} if and only if there exist $\alpha\in\R$  and $\beta\ge 0$ such that
\begin{eqnarray}
	&&(A_0+\alpha A_1+\beta A_2)x^*=0, \label{eq:global_grad}\\
	&&q_1(x^*)=1, ~q_2(x^*)\le  1,\label{eq:feasible}\\
    &&\beta(q_2(x^*)-1)=0,\label{eq:ghx}\\
	&&A_0+\alpha A_1+\beta A_2\succeq 0. \nonumber
	\end{eqnarray}
\end{theorem}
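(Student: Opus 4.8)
The plan is to split the equivalence and dispatch the easy direction first. For the ``if'' part, suppose $\alpha\in\R$ and $\beta\ge0$ satisfy \eqref{eq:global_grad}--\eqref{eq:ghx} together with $A_0+\alpha A_1+\beta A_2\succeq0$. For any feasible $x$ (so $q_1(x)=1$, $q_2(x)\le1$), positive semidefiniteness gives $q_0(x)+\alpha q_1(x)+\beta q_2(x)\ge0$, hence $q_0(x)\ge-\alpha-\beta q_2(x)\ge-\alpha-\beta$ using $\beta\ge0$ and $q_2(x)\le1$. Left-multiplying \eqref{eq:global_grad} by $x^{*T}$ and invoking \eqref{eq:feasible}--\eqref{eq:ghx} (so that $\beta q_2(x^*)=\beta$) yields $q_0(x^*)=-\alpha-\beta$, whence $q_0(x)\ge q_0(x^*)$ for all feasible $x$. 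This settles sufficiency.

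For necessity I would run a separation argument built on the convexity of the joint numerical range \eqref{jointNR}. Under \eqref{eq:PolyakA2} and $n\ge3$, the cone $\mathcal F=\{(q_0(x),q_1(x),q_2(x)):x\in\R^n\}$ is convex, so its affine slice $\widehat P=\{(u_0,u_2):(u_0,1,u_2)\in\mathcal F\}$ is a convex subset of $\R^2$, nonempty by \eqref{eq:PolyakA1}. Problem \eqref{eq:HQCQP_2} then reduces to the two-dimensional convex program $\min\{u_0:(u_0,u_2)\in\widehat P,\ u_2\le1\}$, with optimal value $v^*=q_0(x^*)$ attained at $(v^*,u_2^*)$, $u_2^*=q_2(x^*)$. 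Condition \eqref{eq:PolyakA1} furnishes a Slater point (an $x^0$ with $q_2(x^0)<1$), so strong duality holds and produces a $\beta\ge0$ with $u_0+\beta(u_2-1)\ge v^*$ for all $(u_0,u_2)\in\widehat P$ and with complementary slackness $\beta(u_2^*-1)=0$, which is exactly \eqref{eq:ghx}.

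It remains to convert this supporting inequality into the gradient and curvature conditions. Unwinding $\widehat P$ gives $q_0(x)+\beta q_2(x)\ge c\,q_1(x)$ for every $x$ with $q_1(x)=1$, where $c:=v^*+\beta u_2^*$; homogeneity upgrades this to $x^T(A_0+\beta A_2-cA_1)x\ge0$ on $\{q_1>0\}$, and since \eqref{eq:PolyakA1} provides $x^0$ with $q_1(x^0)=1>0$, so that $A_1$ has a positive eigenvalue, a continuity argument (every point of $\{q_1=0\}$ is a limit of points of $\{q_1>0\}$) extends it to $\{q_1\ge0\}$. Applying the S-lemma (Lemma~\ref{le:S-Lemma}) with $q(x)=x^T(A_0+\beta A_2-cA_1)x$ and the constraint $-q_1(x)\le0$ then yields a $\gamma\ge0$ with $A_0+\beta A_2-(c+\gamma)A_1\succeq0$. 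Set $\alpha:=-(c+\gamma)$. Evaluating this form at $x^*$ gives $q_0(x^*)+\alpha q_1(x^*)+\beta q_2(x^*)=-\gamma$, which must be nonnegative; hence $\gamma=0$, so $A_0+\alpha A_1+\beta A_2\succeq0$ with $\alpha=-c$. Because $x^*$ makes this semidefinite form vanish, it lies in the kernel, delivering \eqref{eq:global_grad}.

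The hard part will be the convexity of the joint numerical range \eqref{jointNR}: this is exactly where the hypotheses $n\ge3$ and \eqref{eq:PolyakA2} are indispensable, and it is the only genuinely three-dimensional step, since the slice $\widehat P$ is cut from the range of three forms and classical Dines-type convexity for two forms does not apply. Everything downstream---the Slater-based strong duality, the passage from $\{q_1=1\}$ to $\{q_1\ge0\}$, and the S-lemma cleanup that pins $\gamma=0$---is routine once convexity is available.
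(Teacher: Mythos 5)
The paper never proves this theorem: it is imported verbatim from \cite[Theorem 6.1]{Polyak98}, so there is no in-paper proof to compare against. Your argument is, however, a sound reconstruction of Polyak's own route, which is exactly the one the introduction credits (``based on the convexity of the joint numerical range \eqref{jointNR}''). The sufficiency computation is correct. For necessity, the chain holds up: convexity of the cone $\mathcal F$ makes the slice $\widehat P$ convex; \eqref{eq:PolyakA1} supplies a Slater point, so the two-dimensional convex program has a multiplier $\beta\ge 0$ with complementary slackness; homogeneity gives $x^T(A_0+\beta A_2-cA_1)x\ge 0$ on $\{q_1>0\}$, and your density argument for extending to $\{q_1\ge 0\}$ is valid because $q_1(x^0)=1$ forces $A_1$ to have a positive eigenvalue; Lemma~\ref{le:S-Lemma} applies with the Slater point $x^0$; evaluating at $x^*$ pins $\gamma=0$; and $x^{*T}(A_0+\alpha A_1+\beta A_2)x^*=0$ combined with positive semidefiniteness yields \eqref{eq:global_grad}. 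The one ingredient you do not establish is the convexity of \eqref{jointNR} for three quadratic forms under $n\ge 3$ and \eqref{eq:PolyakA2}; you are right that this is where those hypotheses are indispensable, and since it is Polyak's well-known convexity theorem (the very fact Remark~\ref{re:ap} refers to), citing it rather than proving it is legitimate --- but be aware that it carries essentially all of the depth of the result, with everything downstream being, as you note, routine convex duality.
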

\begin{remark}\label{re:ap}
Theorem \ref{le:HCDT_g2} holds true if we replace the assumption \eqref{eq:PolyakA2} with the slightly generalized assumption
\begin{align}	
	&\exists \mu \in\R^3:~ \mu_1A_0+\mu_2A_1+\mu_3A_2\succ 0, \label{eq:A1A22}
\end{align}
which was proposed by Polyak \cite{Polyak98} to guarantee the convexity of the joint numerical range \eqref{jointNR}.
\end{remark}

Based on a similar proof of Theorem \ref{le:HCDT_g2} and Remark \ref{re:ap}, we can show the following characterization of the global minimizer of
 \begin{equation}\label{eq:HQCQP_2ee}
\min_{x\in\R^n}\{q_0(x):~q_1(x)=1, ~q_2(x)=1\}.
\end{equation}

\begin{theorem}{\rm (Necessary and sufficient condition for the global minimizer of \eqref{eq:HQCQP_2ee})}\label{le:HCDT_ee}
Suppose  $n\ge 3$. Under the assumptions \eqref{eq:A1A22}, \eqref{eq:PolyakA1} and
	\begin{align}
	\exists x^1\in\R^n:~ q_1(x^1)=1,~ q_2(x^1)>1, \label{eq:as2ee}\tag{C3}
	\end{align}
$x^*$ is a global minimizer of   \eqref{eq:HQCQP_2ee}
 if and only if there exist $\alpha,\beta\in\R$  such that
	\begin{eqnarray}
	&&(A_0+\alpha A_1+\beta A_2)x^*=0, \nonumber\\
	&&q_1(x^*)=1,\  q_2(x^*)=1,\nonumber\\
	&&A_0+\alpha A_1+\beta A_2\succeq 0. \nonumber
	\end{eqnarray}
\end{theorem}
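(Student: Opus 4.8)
The plan is to mirror the proof of Theorem~\ref{le:HCDT_g2}, exploiting the convexity of the joint numerical range $\mathcal{F}=\{(q_0(x),q_1(x),q_2(x)):x\in\R^n\}$ in \eqref{jointNR}, which is guaranteed by assumption~\eqref{eq:A1A22} as recorded in Remark~\ref{re:ap}. The only structural change from Theorem~\ref{le:HCDT_g2} is that the inequality $q_2(x)\le1$ is replaced by the equality $q_2(x)=1$, so the associated multiplier $\beta$ becomes free in sign and the complementarity relation~\eqref{eq:ghx} disappears. Correspondingly, I expect the extra Slater-type hypothesis~\eqref{eq:as2ee} to be precisely what is needed on the ``other side'' of the constraint.

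Sufficiency is a direct computation requiring neither convexity nor the Slater conditions. Given $\alpha,\beta$ satisfying the three displayed properties, write $M=A_0+\alpha A_1+\beta A_2\succeq0$. For every feasible $x$ one has $q_0(x)=x^TMx-\alpha q_1(x)-\beta q_2(x)=x^TMx-\alpha-\beta\ge-\alpha-\beta$, while the stationarity $Mx^*=0$ gives $q_0(x^*)=-\alpha q_1(x^*)-\beta q_2(x^*)=-\alpha-\beta$; hence $q_0(x)\ge q_0(x^*)$ for every feasible $x$.

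For necessity, let $x^*$ be a global minimizer with value $v^*=q_0(x^*)$, so that $(v^*,1,1)$ lies on the boundary of the convex set $\mathcal{F}$ and no point of $\mathcal{F}$ has the form $(u_0,1,1)$ with $u_0<v^*$. I would separate the infeasible point $(v^*-\epsilon,1,1)$ (for $\epsilon>0$) from $\mathcal{F}$, obtaining a nonzero normal $(\gamma_0,\gamma_1,\gamma_2)$ with $\gamma_0 u_0+\gamma_1 u_1+\gamma_2 u_2\ge\gamma_0(v^*-\epsilon)+\gamma_1+\gamma_2$ on $\mathcal{F}$; testing against $(v^*,1,1)\in\mathcal{F}$ forces $\gamma_0\ge0$. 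After ruling out $\gamma_0=0$ (see below), I normalize $\gamma_0=1$, set $\alpha=\gamma_1$, $\beta=\gamma_2$, let $\epsilon\downarrow0$, and substitute $u_i=q_i(x)$ to obtain $x^TMx\ge v^*+\alpha+\beta$ for all $x$. The homogeneity of $x^TMx$ then forces $M\succeq0$ (else the form is unbounded below under scaling) and, at $x=0$, $v^*+\alpha+\beta\le0$. Evaluating at $x^*$ gives $(x^*)^TMx^*=v^*+\alpha+\beta$, which combined with $M\succeq0$ sandwiches $0\le(x^*)^TMx^*=v^*+\alpha+\beta\le0$; hence $(x^*)^TMx^*=0$, so $Mx^*=0$, and $v^*+\alpha+\beta=0$, yielding exactly the three required conditions.

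The main obstacle is justifying the normalization, i.e.\ ruling out the degenerate ``vertical'' case $\gamma_0=0$, and this is precisely where both Slater hypotheses enter. If $\gamma_0=0$, the inequality reads $\gamma_1 q_1(x)+\gamma_2 q_2(x)\ge\gamma_1+\gamma_2$ for all $x$ with $(\gamma_1,\gamma_2)\neq0$. Testing at $x^0$ from~\eqref{eq:PolyakA1} (where $q_2<1$) gives $\gamma_2\le0$, and at $x^1$ from~\eqref{eq:as2ee} (where $q_2>1$) gives $\gamma_2\ge0$, so $\gamma_2=0$; then $\gamma_1(q_1(x)-1)\ge0$ for all $x$, which is impossible for $\gamma_1\neq0$ since $q_1(tx^0)=t^2$ sweeps values on both sides of $1$. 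This is exactly the reason the single-sided hypothesis used for the inequality constraint in Theorem~\ref{le:HCDT_g2} must here be strengthened to the two-sided pair \eqref{eq:PolyakA1}--\eqref{eq:as2ee}. A secondary technical point is the possible non-closedness of $\mathcal{F}$; I would circumvent it by separating the strictly external point $(v^*-\epsilon,1,1)$ and passing to the limit $\epsilon\downarrow0$, exactly as in Polyak's supporting-hyperplane argument underlying Theorem~\ref{le:HCDT_g2}.
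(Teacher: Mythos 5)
Your proposal is correct and follows exactly the route the paper intends: the paper gives no explicit proof of Theorem~\ref{le:HCDT_ee}, saying only that it follows from ``a similar proof of Theorem~\ref{le:HCDT_g2} and Remark~\ref{re:ap},'' i.e., Polyak's separating-hyperplane argument on the convex joint numerical range \eqref{jointNR}, which is precisely what you reconstruct --- including the correct use of the two-sided pair \eqref{eq:PolyakA1}--\eqref{eq:as2ee} to exclude a vertical normal $(0,\gamma_1,\gamma_2)$. The one small point to tidy is the order of operations at $\epsilon\downarrow 0$: normalize the separating vector to unit norm, pass to the limit first, and only then run your $\gamma_0=0$ exclusion on the limiting normal (equivalently, observe that if $(\alpha_\epsilon,\beta_\epsilon)$ were unbounded after the $\gamma_0=1$ normalization, renormalizing and taking limits would reproduce exactly the vertical case you have already ruled out), after which the homogeneity and complementarity steps go through as you wrote them.
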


The assumption \eqref{eq:PolyakA2} implies that $A_1$ and $A_2$ are simultaneously diagonalizable by congruence, see \cite{Jiang16} and references therein. Nesterov \cite{Nesterov98} observed that his approximation algorithm for quadratic optimization with separable constraints can be applied to \eqref{eq:HQCQP_2} under the assumption \eqref{eq:PolyakA2}.

The minimum of \eqref{eq:HQCQP_2} is attained if its feasible region is compact. In the following,
we show that the assumption \eqref{eq:PolyakA2} is weaker than the compactness assumption.
\begin{theorem}\label{rem:PolyakB}
	(i) Suppose $n\ge3$ and $E=\{x:x^TA_1x=1,x^TA_2x=1\}$ is nonempty. Then $E$ is compact if and only if \eqref{eq:PolyakA2} holds.
	
	(ii) Suppose $n\ge3$ and $F=\{x:x^TA_1x=1,x^TA_2x\le1\}$ is nonempty. Then $F$ is compact if and only if there exists a scalar $\mu\in\R$ such that $\mu A_1+A_2\succ 0$.
\end{theorem}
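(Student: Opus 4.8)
The plan is to prove both ``if'' directions by a one-line norm bound and both ``only if'' directions by contraposition, exhibiting in each case an explicit sequence in the feasible set whose norm diverges. Throughout write $q(x)=(q_1(x),q_2(x))\in\R^2$ and $G=\{q(y):\ \|y\|=1\}$ for the joint numerical range on the unit sphere. For the ``if'' part of (i), if $\mu_1A_1+\mu_2A_2\succ0$ then every $x\in E$ obeys $\lambda_{\min}(\mu_1A_1+\mu_2A_2)\,\|x\|^2\le x^T(\mu_1A_1+\mu_2A_2)x=\mu_1+\mu_2$ (and nonemptiness of $E$ makes $\mu_1+\mu_2>0$), so $E$ is bounded and, being closed, compact. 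The ``if'' part of (ii) is the same computation: $\lambda_{\min}(\mu A_1+A_2)\,\|x\|^2\le \mu+q_2(x)\le\mu+1$ for $x\in F$.

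For the ``only if'' part of (i) I use that $G$ is compact and, for $n\ge3$, convex (Brickman's theorem on the joint numerical range of two quadratic forms); this is where $n\ge3$ is used. Since $\mu_1A_1+\mu_2A_2\succ0$ is equivalent to $\mu^Tg>0$ for all $g\in G$, separating the origin from the compact convex set $G$ shows that \eqref{eq:PolyakA2} fails exactly when $0\in G$, i.e.\ when some unit vector $u$ satisfies $q_1(u)=q_2(u)=0$. Given $E\ne\emptyset$, pick $x_0\in E$; then $g_0:=q(x_0)/\|x_0\|^2=(1,1)/\|x_0\|^2\in G$, and by convexity the segment $[0,g_0]\subset G$. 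Hence for each small $\tau>0$ there is a unit vector $y_\tau$ with $q(y_\tau)=\tau g_0$, and the rescaled vector $z_\tau:=(\|x_0\|/\sqrt{\tau})\,y_\tau$ satisfies $q_1(z_\tau)=q_2(z_\tau)=1$ while $\|z_\tau\|=\|x_0\|/\sqrt{\tau}\to\infty$. Thus failure of \eqref{eq:PolyakA2} forces $E$ unbounded, which is the contrapositive.

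For the ``only if'' part of (ii) I replace the isotropy characterization by Finsler's lemma (Lemma \ref{thm:F2}) applied to $A=A_2,\ B=A_1$: no scalar $\mu$ makes $\mu A_1+A_2\succ0$ precisely when there is $u\ne0$ with $q_1(u)=0$ and $q_2(u)\le0$. Fixing $x_0\in F$ and forming $g_u=q(u)/\|u\|^2=(0,q_2(u)/\|u\|^2)$ and $g_0=q(x_0)/\|x_0\|^2$ in $G$, convexity again yields $[g_u,g_0]\subset G$; the point $g_\tau=(1-\tau)g_u+\tau g_0$ has first coordinate $\tau/\|x_0\|^2>0$, so rescaling its preimage by $(\tau/\|x_0\|^2)^{-1/2}$ gives $z_\tau$ with $q_1(z_\tau)=1$ and $\|z_\tau\|\to\infty$. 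The only thing to verify is $q_2(z_\tau)\le1$: if $q_2(u)<0$ the second coordinate of $g_\tau$ stays negative while the scale factor blows up, so $q_2(z_\tau)\to-\infty$; if $q_2(u)=0$ a direct computation gives $q_2(z_\tau)\equiv q_2(x_0)\le1$. In both cases $z_\tau\in F$ for all small $\tau>0$, so $F$ is unbounded.

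The heart of the proof is the converse direction, and the delicate point is that failure of definiteness need not produce a line inside the feasible set: moving along a fixed isotropic ray $x_0+tu$ changes $q_i$ linearly and leaves the set, and indeed $E$ or $F$ may be unbounded while containing no line (a hyperbola sitting in an affine slice is the model picture). Interpolating $q$-values along a segment of $G$ and rescaling once, rather than moving points linearly, is what sidesteps this. I therefore expect the two genuine obstacles to be (a) the convexity of $G$, which rests on $n\ge3$ and actually fails for $n=2$---where one can build a nonempty compact $E$ with no positive-definite combination, so the statement itself is false for $n=2$---and (b) the degenerate subcase $q_2(u)=0$ in part (ii), which must be isolated from $q_2(u)<0$ as above.
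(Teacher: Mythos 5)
Your proof is correct, but it follows a genuinely different route from the paper's. The paper proves necessity by applying its own global optimality machinery to the auxiliary problem $\min\{-x^Tx\}$ over $E$ (resp.\ $F$): compactness guarantees attainment, and Theorem~\ref{le:HCDT_ee} (resp.\ Theorem~\ref{le:HCDT_g2} with Remark~\ref{re:ap}) yields $-I+\mu_1A_1+\mu_2A_2\succeq0$, hence a positive definite combination; since those theorems require the Slater-type conditions \eqref{eq:PolyakA1} and \eqref{eq:as2ee}, the paper must separately treat the degenerate cases where these fail, which it does via Theorem~\ref{th:More} and Lemma~\ref{thm:F2}. You instead work contrapositively and constructively: characterize failure of \eqref{eq:PolyakA2} (resp.\ of $\exists\mu:\mu A_1+A_2\succ0$) by the existence of an isotropic direction --- via separation from the compact convex unit-sphere range $G$ in (i) and via Finsler's lemma in (ii) --- and then use Brickman's convexity of $G$ to interpolate $q$-values along a segment and rescale, producing an explicit unbounded sequence in $E$ or $F$. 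Your approach buys a uniform treatment with no case split on \eqref{eq:PolyakA1}/\eqref{eq:as2ee}, an explicit witness of unboundedness, a correct isolation of the degenerate subcase $q_2(u)=0$ in (ii), and a transparent explanation of where $n\ge3$ enters (convexity of $G$ fails for $n=2$, and your two-point counterexample shows the theorem itself fails there); the cost is importing Brickman's theorem, which the paper does not state as a lemma, whereas the paper's argument reuses only results it has already established. Both arguments ultimately rest on the same convexity phenomenon for joint numerical ranges, so neither is more general than the other.
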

\begin{proof}
	(i) Sufficiency. $E$ is clearly closed. Under the assumption \eqref{eq:PolyakA2}, we have
\begin{align}
E&\subseteq\{x:x^T(\mu_1A_1+\mu_2A_2)x=\mu_1+\mu_2\}\nonumber\\ &\subseteq\{x:x^T(\mu_1A_1+\mu_2A_2)x\le\mu_1+\mu_2\},\nonumber
	\end{align}
and hence $E$ is bounded.
	
Necessity.  We first assume that \eqref{eq:PolyakA1} and \eqref{eq:as2ee} hold.
Since $E$ is compact, the global minimum of
	\begin{equation}    \label{eq:eq}
	\min_{x\in\R^n}\{-x^Tx:~x^TA_1x=1,~ x^TA_2x=1\}
	\end{equation}
	is attainable.
Let $x^*$ be a global minimizer. The assumption \eqref{eq:A1A22} holds for problem \eqref{eq:eq} with $\mu=(-1,0,0)^T$. According to
	Theorem \ref{le:HCDT_ee}, there exist $\mu_1,\ \mu_2\in\R$ such that $-I+\mu_1 A_1+\mu_2 A_2\succeq 0$, which implies that $\mu_1 A_1+\mu_2 A_2\succ 0$.

Next, we assume that \eqref{eq:PolyakA1} does not hold, that is,
$\min\{x^TA_2x:~x^TA_1x=1\}\ge1$. As we have assumed that $E\neq\emptyset$,  it holds that \begin{equation}\label{eq:A1A2}
	\min\{x^TA_2x:~x^TA_1x=1\}=1.
	\end{equation}
Actually, any vector in $E$ is a global minimizer of problem \eqref{eq:A1A2}.
By Theorem \ref{th:More}, $x^*$ is a global minimizer of
	\eqref{eq:A1A2} if and only if there exists a scalar $\alpha\in\R$ such that
	\begin{eqnarray}
	&&(A_2+\alpha A_1)x^*=0,\label{A:1}\\
	&&x^{*T}A_1x^*= 1,\label{A:2}\\
	&&A_2+\alpha A_1\succeq 0.\label{A:3}
	\end{eqnarray}
It follows from \eqref{eq:A1A2}-\eqref{A:2} that $\alpha=-1$. Then \eqref{A:3} reduces to
\begin{equation}
A_2-A_1\succeq 0,\label{A:4}
\end{equation}
which implies that
\begin{equation}
(A_2-A_1)x=0~\Longleftrightarrow~x^T(A_2-A_1)x=0.
\label{A:44}
\end{equation}
Therefore, we have
\begin{eqnarray*}
E
=\{x:~(A_2- A_1)x=0,~	x^TA_1x= 1\}=\{Vy:~y^TV^TA_1Vy= 1\},
\end{eqnarray*}
where the columns of $V\in\R^{n\times r}$ form the basis of the null space of $A_2- A_1$.
Since $V$ is of full column-rank, $E$ is nonempty and bounded if and only if $
\{y:~y^TV^TA_1Vy= 1\}
$
is nonempty and bounded, which is equivalent to $V^TA_1V\succ 0$. That is,
\begin{equation}
x\neq0,~(A_2-A_1)x=0~\Longrightarrow~ x^TA_1x>0.
\label{A:34}
\end{equation}
According to Lemma \ref{thm:F2}, it follows from \eqref{A:44}-\eqref{A:34} that there exists a scalar $\mu\in\R$ such that
\begin{equation}
A_1+\mu(A_2-A_1)\succ 0.\label{A:5}
\end{equation}
Therefore, \eqref{eq:PolyakA2} holds with the setting $\mu_1=1-\mu$ and $\mu_2=\mu$. The proof under the assumption that  \eqref{eq:as2ee} does not hold is similar and hence is omitted.
	
(ii) The proof of sufficiency is similar to that of (i). We now prove the necessity. We first assume that \eqref{eq:PolyakA1} holds.
It follows from the compactness of $F$ that there exists a global minimizer of
	\begin{equation*}
	\min_{x\in\R^n}\{-x^Tx:~x^TA_1x=1,~x^TA_2x\le1\},
	\end{equation*}
denoted by $x^*$. The assumption \eqref{eq:A1A22} holds. According to Theorem \ref{le:HCDT_g2} and Remark \ref{re:ap},  there are $\mu_1\in\R, \mu_2\ge0$ such that
$-I+\mu_1 A_1+\mu_2 A_2\succeq 0$, which implies \eqref{eq:PolyakA2}. Without loss of generality, we assume $\mu_2>0$, since in case $\mu_2=0$  it holds that $\mu_1 A_1+\epsilon A_2\succ 0$ for any sufficient small $\epsilon>0$.
Then we have
\[
\tfrac{\mu_1}{\mu_2} A_1+ A_2\succ 0.
\]
Now we assume that \eqref{eq:PolyakA1} does not hold. Then \eqref{eq:A1A2} holds based on a similar proof for the necessity part in (i).
Moreover, also similar to the proof of (i), there is a $\mu\in\R$ such that \eqref{A:4} and \eqref{A:5} hold.
Therefore, for all $\lambda\ge\mu$, it holds that
$A_1+\lambda(A_2-A_1)\succ 0$. Taking a $\lambda>0$ yields that
\[
\tfrac{1-\lambda}{\lambda}A_1+A_2\succ 0.
\]
\end{proof}

We list in the end of this section
the standard second-order necessary and sufficient optimality conditions for local minimizers of \eqref{eq:HQCQP_2}, respectively.
\begin{lemma}{\rm(Standard second-order necessary   condition, see \cite[Chapter 9]{Fletcher1987}, \cite[Chapter 11]{Luenberger1984})}\label{le:cl_ne}
	Let $x^*$ be a local  minimizer of \eqref{eq:HQCQP_2} with $q_2(x^*)=1$ where LICQ holds. There exist $\alpha\in\R$  and $\beta\ge 0$ satisfying \eqref{eq:global_grad}-\eqref{eq:ghx}. Moreover, if $\beta>0$,
\begin{equation}\label{eq:cl_ne}
v^T(A_0+\alpha A_1+\beta A_2)v \ge 0, \ \forall v {\rm\  such\ that\ } v^TA_1x^*=0~ {\rm and }~v^TA_2x^*=0.
\end{equation}
If $\beta=0$,
\begin{equation}\label{eq:cl_ne2}
v^T(A_0+\alpha A_1+\beta A_2)v \ge 0, \ \forall v {\rm\  such\ that\ } v^TA_1x^*=0~ {\rm and }~v^TA_2x^*\le0.
\end{equation}
\end{lemma}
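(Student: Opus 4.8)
The plan is to obtain both assertions as the specialization to homogeneous quadratics of the classical first- and second-order Karush--Kuhn--Tucker necessary conditions for a problem with one equality and one active inequality constraint \cite{Fletcher1987,Luenberger1984}. Since $q_i(x)=x^TA_ix$, we have $\nabla q_i(x)=2A_ix$ and $\nabla^2 q_i(x)=2A_i$ for $i=0,1,2$. At $x^*$ the equality $q_1(x^*)=1$ holds by feasibility and, by hypothesis, the inequality is active, $q_2(x^*)=1$. Thus locally \eqref{eq:HQCQP_2} is a standard equality/active-inequality constrained problem, and the textbook machinery applies once a constraint qualification is in force, which here is exactly the assumed LICQ.

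First I would dispatch the first-order part. Because LICQ holds at $x^*$, the KKT conditions are necessary: there exist a free multiplier $\alpha\in\R$ for the equality and a sign-constrained multiplier $\beta\ge0$ for the active inequality with $\nabla q_0(x^*)+\alpha\nabla q_1(x^*)+\beta\nabla q_2(x^*)=0$. Dividing by $2$ yields $(A_0+\alpha A_1+\beta A_2)x^*=0$, which is \eqref{eq:global_grad}; the feasibility relations \eqref{eq:feasible} hold by assumption, and complementary slackness \eqref{eq:ghx} is automatic since $q_2(x^*)=1$.

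The core is the second-order part. The standard second-order necessary condition asserts that the Hessian of the Lagrangian, namely $\nabla^2 L=2(A_0+\alpha A_1+\beta A_2)$, is positive semidefinite over the critical cone at $x^*$; as the factor $2$ is immaterial for positive semidefiniteness, this reproduces \eqref{eq:cl_ne}--\eqref{eq:cl_ne2} once the cone is made explicit. I would recall its description through the multiplier sign. A strictly active inequality ($\beta>0$) is treated as an equality, so the cone is the subspace $\{v:\ v^TA_1x^*=0,\ v^TA_2x^*=0\}$ and semidefiniteness there is precisely \eqref{eq:cl_ne}; a weakly active inequality ($\beta=0$) retains its one-sided linearization, so the cone is $\{v:\ v^TA_1x^*=0,\ v^TA_2x^*\le0\}$ and the condition becomes \eqref{eq:cl_ne2}. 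The dichotomy is justified by the stationarity identity $A_0x^*=-\alpha A_1x^*-\beta A_2x^*$: on $\{v:\ v^TA_1x^*=0,\ v^TA_2x^*\le0\}$ one gets $v^TA_0x^*=-\beta\,v^TA_2x^*\ge0$, so intersecting with the non-ascent requirement $v^TA_0x^*\le0$ that the critical cone imposes forces $\beta\,v^TA_2x^*=0$, eliminating the inequality direction exactly when $\beta>0$.

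I expect no genuine obstacle here, because the objective and constraints are quadratic and all derivatives are constant or linear, so the classical conditions transcribe directly. The only point demanding care is confirming that LICQ supplies the regularity required by both the multiplier rule and the second-order test---LICQ equates the tangent cone of the feasible set with its linearization at $x^*$---and then matching the abstract critical cone to the correct one of \eqref{eq:cl_ne} and \eqref{eq:cl_ne2} according to the sign of $\beta$.
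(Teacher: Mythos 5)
Your proposal is correct and coincides with what the paper intends: Lemma~\ref{le:cl_ne} is stated without proof as the textbook second-order necessary condition, and your derivation---KKT multipliers from LICQ, positive semidefiniteness of the Lagrangian Hessian $2(A_0+\alpha A_1+\beta A_2)$ on the critical cone, and the identification of that cone as \eqref{eq:cl_ne} when $\beta>0$ versus \eqref{eq:cl_ne2} when $\beta=0$ via the stationarity identity---is exactly the standard argument being cited.
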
	
\begin{lemma}{\rm(Standard second-order sufficient   condition, see \cite[Chapter 9]{Fletcher1987}, \cite[Chapter 11]{Luenberger1984})}\label{le:cl_su}
Let $x^*$ be a feasible solution of \eqref{eq:HQCQP_2} with $q_2(x^*)=1$. If there exist $\alpha\in\R$  and $\beta>0$ satisfying \eqref{eq:global_grad} and
\begin{equation}\label{eq:cl_su}
v^T(A_0+\alpha A_1+\beta A_2)v > 0, \ \forall v\neq0 {\rm\  such\ that\ } v^TA_1x^*=0~ {\rm and }~v^TA_2x^*=0.
\end{equation}
Then $x^*$ is a strict local minimizer of \eqref{eq:HQCQP_2}.
\end{lemma}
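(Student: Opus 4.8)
The plan is to argue by contradiction, exploiting the fact that homogeneity collapses the Lagrangian into a pure quadratic form whose second-order expansion at $x^*$ is exact. Set $H = A_0 + \alpha A_1 + \beta A_2$, so that the gradient condition \eqref{eq:global_grad} reads $Hx^* = 0$. Suppose $x^*$ were not a strict local minimizer; then I would choose a sequence of feasible points $x_k \to x^*$ with $x_k \neq x^*$ and $q_0(x_k) \le q_0(x^*)$, write $x_k = x^* + t_k d_k$ with $t_k = \|x_k - x^*\| \to 0^+$ and $\|d_k\| = 1$, and pass to a subsequence so that $d_k \to d$ with $\|d\| = 1$. The goal is to show that $d$ lies in the critical cone $\{v: v^TA_1x^* = 0,\ v^TA_2x^* = 0\}$ while simultaneously $d^THd \le 0$, which contradicts \eqref{eq:cl_su}.

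First I would show $d$ is in the critical cone. Expanding the equality constraint $q_1(x_k) = 1 = q_1(x^*)$ gives $2d_k^TA_1x^* + t_kd_k^TA_1d_k = 0$; dividing by $t_k$ and letting $k \to \infty$ yields $d^TA_1x^* = 0$. Similarly, the objective inequality $q_0(x_k) \le q_0(x^*)$ and the active constraint $q_2(x_k) \le 1 = q_2(x^*)$ give, in the limit, $d^TA_0x^* \le 0$ and $d^TA_2x^* \le 0$. Now $Hx^* = 0$ rewrites $A_0x^* = -\alpha A_1x^* - \beta A_2x^*$, so that $d^TA_0x^* = -\beta\, d^TA_2x^*$, the $A_1$ term dropping out. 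Combining $d^TA_0x^* \le 0$ with $\beta > 0$ forces $d^TA_2x^* \ge 0$, and together with $d^TA_2x^* \le 0$ this pins down $d^TA_2x^* = 0$. Hence $d$ lies in the critical cone, and $d \neq 0$ since $\|d\| = 1$.

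Second I would exploit the Lagrangian. For any feasible $x$, using $q_1(x) = 1$ and $\beta(q_2(x) - 1) \le 0$ (which holds since $q_2(x) \le 1$ and $\beta > 0$), one has $x^THx - (\alpha + \beta) = q_0(x) + \beta(q_2(x) - 1) \le q_0(x)$, while at $x^*$ equality holds: $x^{*T}Hx^* - (\alpha + \beta) = q_0(x^*)$. Because $Hx^* = 0$, the expansion $x_k^THx_k = x^{*T}Hx^* + t_k^2\, d_k^THd_k$ has no linear term and no remainder, so $t_k^2\, d_k^THd_k = x_k^THx_k - x^{*T}Hx^* \le q_0(x_k) - q_0(x^*) \le 0$. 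Dividing by $t_k^2$ and passing to the limit gives $d^THd \le 0$, contradicting $d^THd > 0$ from \eqref{eq:cl_su}. This contradiction establishes that $x^*$ is a strict local minimizer.

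The delicate point is the critical-cone membership, specifically upgrading $d^TA_2x^* \le 0$ to $d^TA_2x^* = 0$; this is the only place the strict complementarity $\beta > 0$ is used, and it is essential, matching the case distinction in the necessary condition Lemma \ref{le:cl_ne}. Everything else is made unusually clean by homogeneity: since $q_0, q_1, q_2$ are quadratic and $Hx^* = 0$, the Lagrangian difference is exactly the quadratic form $t_k^2\, d_k^THd_k$ with no higher-order terms to control, so no separate remainder estimate is needed.
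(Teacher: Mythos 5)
Your proof is correct. Note that the paper does not prove this lemma at all: it is stated as the textbook second-order sufficient condition with citations to Fletcher and Luenberger, so there is no in-paper argument to compare against. Your contradiction argument is a clean, self-contained verification tailored to the homogeneous setting. Both halves check out: the limiting direction $d$ of the normalized sequence $d_k=(x_k-x^*)/t_k$ satisfies $d^TA_1x^*=0$ exactly (from the equality constraint), and the chain $d^TA_0x^*\le 0$, $A_0x^*=-\alpha A_1x^*-\beta A_2x^*$, $\beta>0$ correctly upgrades $d^TA_2x^*\le 0$ to $d^TA_2x^*=0$, placing $d$ in the critical cone of \eqref{eq:cl_su}; and the Lagrangian bound $x^THx-(\alpha+\beta)\le q_0(x)$ for feasible $x$, with equality at $x^*$ and with the exact (remainder-free) expansion $x_k^THx_k=t_k^2\,d_k^THd_k$ coming from $Hx^*=0$, delivers $d^THd\le 0$ and the contradiction. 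Your closing observation is also the right one to emphasize: the argument is a genuine simplification of the general nonlinear-programming proof precisely because all functions are quadratic forms, so the second-order Taylor expansion of the Lagrangian is exact and no uniformity or remainder estimates are needed; the only place $\beta>0$ enters is in pinning down $d^TA_2x^*=0$, which mirrors the case split in Lemma \ref{le:cl_ne}.
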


\section{Situations without local non-global minimizer}\label{sec:QC1QP}
In this section, we identify the situations when \eqref{eq:HQCQP_2} has no local non-global minimizer. As a key lemma, we first show that any local minimizer of (QQ1) is globally optimal.

\subsection{Local minimizers of (QQ1)}\label{sec:qc1qp}
We show that any local minimizer of (QQ1) is  globally optimal.  As applications, not only the inequality constraint of \eqref{eq:HQCQP_2} is binding at any local non-global minimizer, but also both  \eqref{eq:fhtrs} and \eqref{eq:TLS1} have no local non-global minimizer.

The necessary and sufficient optimality condition for the global minimizer of (QQ1) is a corollary of Theorem \ref{th:More}. Essentially, the hidden convexity of (QQ1) is due to  Dines' theorem \cite{Dines41}.

\begin{lemma}\label{le:HGTRS-g}
	Suppose that problem (QQ1) has a feasible solution. Then $x^*$ is a global minimizer of (QQ1) if and only if there exists a scalar $\alpha\in\R$ such that
	\begin{eqnarray}
	&&(A_0+\alpha A_1)x^*=0,\label{eq:gHGTRS}\\
	&&
	x^{*T}A_1x^*= 1,\label{eq:fHGTRS}\\
	&&A_0+\alpha A_1\succeq 0.\label{eq:sHGTRS}
	\end{eqnarray}
\end{lemma}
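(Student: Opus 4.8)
The plan is to obtain Lemma \ref{le:HGTRS-g} as a direct specialization of Theorem \ref{th:More}. I would identify (QQ1) with the equality-constrained problem $\min\{q(x):c(x)=0\}$ by setting the quadratic functions $q(x):=q_0(x)=x^TA_0x$ and $c(x):=q_1(x)-1=x^TA_1x-1$. Under this identification the relevant derivative data are $\nabla q(x)=2A_0x$, $\nabla^2q=2A_0$, $\nabla c(x)=2A_1x$, and $\nabla^2c=2A_1$. Hence the KKT condition $\nabla q(x^*)+\alpha\nabla c(x^*)=0$ of Theorem \ref{th:More} reads $2(A_0+\alpha A_1)x^*=0$, and its curvature condition $\nabla^2q(x^*)+\alpha\nabla^2c(x^*)\succeq0$ reads $2(A_0+\alpha A_1)\succeq0$; dividing by the common factor $2$, these become exactly \eqref{eq:gHGTRS} and \eqref{eq:sHGTRS}, while $c(x^*)=0$ is precisely \eqref{eq:fHGTRS}. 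In particular the Lagrange multiplier $\alpha$ is the same in both formulations, so the equivalence will follow once the hypotheses of Theorem \ref{th:More} are confirmed.

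The real work therefore reduces to verifying the two standing assumptions of Theorem \ref{th:More} for $c$, and I expect both to follow from feasibility of (QQ1) alone. First I would check $\nabla^2c=2A_1\neq0$: if $A_1=0$ then $q_1\equiv0$, so the constraint $q_1(x)=1$ would be infeasible, contradicting the hypothesis that (QQ1) has a feasible solution. Next I would verify the sign condition \eqref{eq:cx}, namely $\inf\{c(x)\}<0<\sup\{c(x)\}$. The lower estimate is immediate from $c(0)=-1<0$. For the upper estimate, I would take any feasible point $x^*$, so that $q_1(x^*)=1$, and scale it: since $c(tx^*)=t^2q_1(x^*)-1=t^2-1\to+\infty$ as $t\to\infty$, we get $\sup\{c(x)\}=+\infty>0$. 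Both hypotheses thus hold without any definiteness assumption on $A_1$.

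With these two points in place, Theorem \ref{th:More} applies and yields the claimed equivalence directly. I do not anticipate a genuine obstacle: the statement is a clean corollary of the generalized (TRS) characterization, and the only step deserving care is confirming \eqref{eq:cx}, which is exactly what makes mere feasibility of (QQ1)---rather than compactness or positive (semi)definiteness of $A_1$---the correct hypothesis. The deeper reason that such a convex-type optimality characterization is available for a nonconvex homogeneous problem is the hidden convexity of (QQ1), traceable to Dines' theorem on the convexity of the joint range $\{(x^TA_0x,x^TA_1x):x\in\R^n\}$; however, this convexity is already absorbed into Theorem \ref{th:More}, so the proof itself requires no separate appeal to Dines' theorem.
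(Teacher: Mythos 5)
Your proposal is correct and follows exactly the paper's route: the paper likewise verifies $A_1\neq 0$ from feasibility, checks condition \eqref{eq:cx} via $c(0)=-1<0$ and a scaled feasible point (the paper uses $2x_0$, giving $c(2x_0)=3>0$, where you let $t\to\infty$), and then applies Theorem \ref{th:More}. No gaps; the two arguments are essentially identical.
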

\begin{proof}
Let $x_0$ be a feasible solution for (QQ1). Then it holds that $A_1\neq0$ and $$0^TA_10-1<0<(2x_0)^TA_1(2x_0)-1.$$
That is, the assumption \eqref{eq:cx} holds.
Applying Theorem \ref{th:More} to (QQ1) yields \eqref{eq:gHGTRS}-\eqref{eq:sHGTRS}.
\end{proof}

As independently proved in \cite[Lemma 2.1]{Golub06} and  \cite[Lemma 3.2]{JOSE94},
problem (QQ1) with $A_1=I$ has no local non-global minimizer. In this section, we show that this property holds for the general (QQ1).

\begin{theorem}\label{th:HGTRS-l}
Any local minimizer of problem	(QQ1) is  globally optimal.
\end{theorem}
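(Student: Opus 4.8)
The plan is to verify that any local minimizer $x^*$ of (QQ1) satisfies the three conditions \eqref{eq:gHGTRS}--\eqref{eq:sHGTRS} characterizing global minimizers in Lemma \ref{le:HGTRS-g}. Feasibility \eqref{eq:fHGTRS} is immediate, the first-order stationarity \eqref{eq:gHGTRS} will come from a constraint qualification argument, and the only real work is upgrading the subspace positive semidefiniteness supplied by the second-order necessary condition to the full positive semidefiniteness \eqref{eq:sHGTRS}; here the homogeneity of both $q_0$ and $q_1$ is decisive.

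First I would observe that LICQ holds everywhere on the feasible set: if $A_1x^*=0$ then $q_1(x^*)=x^{*T}A_1x^*=0\neq1$, so necessarily $A_1x^*\neq0$ and the single constraint gradient $2A_1x^*$ is nonzero. Consequently $x^*$ is a KKT point, yielding a multiplier $\alpha\in\R$ with $(A_0+\alpha A_1)x^*=0$, which is exactly \eqref{eq:gHGTRS}. Write $M=A_0+\alpha A_1$ for brevity and note that $Mx^*=0$, so $x^*$ lies in the kernel of $M$.

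Next I would invoke the standard second-order necessary optimality condition for the equality-constrained problem (QQ1), giving $v^TMv\ge0$ for every $v$ in the tangent space $T=\{v:\ v^TA_1x^*=0\}$. The key step, and the place where homogeneity enters, is to deduce $M\succeq0$ on all of $\R^n$. For an arbitrary $w\in\R^n$, set $t=w^TA_1x^*$ and $v=w-tx^*$; using $x^{*T}A_1x^*=1$ one checks that $v^TA_1x^*=t-t=0$, so $v\in T$. Because $Mx^*=0$, we have $Mw=Mv$ and $x^{*T}Mv=(Mx^*)^Tv=0$, whence $w^TMw=v^TMv\ge0$. Since $w$ was arbitrary, $M=A_0+\alpha A_1\succeq0$, which is \eqref{eq:sHGTRS}.

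With \eqref{eq:gHGTRS}--\eqref{eq:sHGTRS} all verified, Lemma \ref{le:HGTRS-g} certifies that $x^*$ is a global minimizer, completing the argument. The main obstacle is conceptual rather than computational: one must recognize that the absence of linear terms forces stationarity to read $Mx^*=0$ (rather than $Mx^*=-b$ as in \eqref{eq:trs}), so the ``missing'' curvature direction $x^*$ lies in $\ker M$ and contributes nothing to $w^TMw$. This is precisely the structural reason why (QQ1), unlike \eqref{eq:trs}, admits no local non-global minimizer.
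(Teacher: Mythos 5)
Your proof is correct, and the endgame is genuinely different from (and slicker than) the paper's. Both proofs share the same opening: LICQ holds at every feasible point because $A_1x^*=0$ would force $q_1(x^*)=0\neq1$, the KKT condition gives $(A_0+\alpha A_1)x^*=0$, and the second-order necessary condition gives $v^T(A_0+\alpha A_1)v\ge0$ on the tangent hyperplane $P=\{v:\ v^TA_1x^*=0\}$. Where you diverge is in upgrading semidefiniteness on $P$ to semidefiniteness on all of $\R^n$. The paper argues by contradiction through the eigenstructure of $M=A_0+\alpha A_1$: if $M$ had a (necessarily unique) negative eigenvalue, the zero eigenspace would be forced into $P$, hence $x^*\in P$, contradicting $x^{*T}A_1x^*=1$. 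You instead decompose an arbitrary $w$ as $w=v+tx^*$ with $t=w^TA_1x^*$ and $v\in P$ (valid precisely because $x^{*T}A_1x^*=1\neq0$, so $\R^n=P\oplus{\rm span}\{x^*\}$), and observe that $Mx^*=0$ kills both the cross term and the $t^2$ term, giving $w^TMw=v^TMv\ge0$ directly. This is airtight and shorter; it also isolates the structural point cleanly, namely that the complement of the tangent space is spanned by a kernel vector of the Hessian of the Lagrangian, which is exactly what fails for the inhomogeneous problem \eqref{eq:trs}. The paper's eigenvector route proves the same fact in a more roundabout way; the only thing it "buys" is the explicit observation that a hypothetical negative eigenvector cannot lie in $P$, which is not needed here.
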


\begin{proof}
Let $x^*$ be a local minimizer of (QQ1). It follows from the feasibility of $x^*$ that  $x^*\neq 0$ and $A_1x^*\neq 0$.
By the second-order necessary optimality condition, there exists a scalar $\alpha\in\R$ satisfying
\eqref{eq:gHGTRS}-\eqref{eq:fHGTRS}, and
\begin{equation}\label{eq:2ed-con}
v^T(A_0+\alpha A_1)v\ge 0, ~ \forall v\in P:=\{v\in\R^n: v^TA_1x^*=0\}.
\end{equation}
We claim that $A_0+\alpha A_1$ has no negative eigenvalue, i.e., \eqref{eq:sHGTRS} holds. According to Lemma \ref{le:HGTRS-g},  $x^*$ is a global minimizer of (QQ1). Suppose, on the contrary, $A_0+\alpha A_1\not\succeq 0$. Then it follows from \eqref{eq:2ed-con} that $A_0+\alpha A_1$ has exactly one negative eigenvalue, denoted by $\lambda_1(<0)$,  and the corresponding eigenspace is span$\{v_1\}$, where $v_1$ is a unit eigenvector. It follows from \eqref{eq:gHGTRS} and $x^*\neq 0$ that $A_0+\alpha A_1$ has a zero eigenvalue.
The corresponding orthogonal unit-eigenvectors are
denoted by $v_2, \cdots, v_k$ ($k\ge 2$).
Since $\text{span}\{v_1, v_2\}$ is of dimension two and $P$ is an $(n-1)$-dimensional subspace,  $\text{span}\{v_1, v_2\}\cap P$ is a subspace of dimension at least one, i.e.,
\begin{equation}
\text{dim}( \text{span}\{v_1, v_2\}\cap P) \ge 1. \label{dim1}
\end{equation}
For any nonzero
$v\in\text{span}\{v_1, v_2\}\cap P$, let $v=a_1v_1+a_2v_2$ for $a_1,a_2\in\R$. Since $(A_0+\alpha A_1)v_2=0$,  we have
$
v^T(A_0+\alpha A_1)v = a_1^2v_1^T(A_0+\alpha A_1)v_1$.
It follows from \eqref{eq:2ed-con} and the fact
$v_1^T(A_0+\alpha A_1)v_1<0$ that $a_1=0$. We obtain
\begin{equation}
\text{span}\{v_1, v_2\}\cap P \subseteq \text{span}\{v_2\}.\label{dim2}
\end{equation}
Combining \eqref{dim1} with \eqref{dim2} yields that
\[
\text{span}\{v_1, v_2\}\cap P=\text{span}\{v_2\}.
\]
Then it holds that $v_2^TA_1x^*=0$.
Similarly, we have
\begin{eqnarray}
v_j^TA_1x^*=0, \ \forall j=2, \cdots, k.\label{eq:Htrs-tagent}
\end{eqnarray}
It follows from \eqref{eq:gHGTRS} that  $x^*\in\text{span}\{v_2, \cdots, v_k\}$. Then there exist $b_2,\cdots,b_k\in \R$ such that $x^*=b_2v_2+ \cdots+b_k v_k$. According to \eqref{eq:Htrs-tagent}, we have
$$
x^{*T}A_1x^*=\sum_{i=2}^k b_iv_i^TA_1x^*=0,
$$
which contradicts \eqref{eq:fHGTRS}.
\end{proof}

\begin{corollary}\label{cor:c1p1_ne}
Any local minimizer of problem
\begin{equation}\label{QC1QP_I}
	\min_{x\in\R^n}\{q_0(x):~q_1(x)\le1\},
\end{equation}
is globally optimal.
\end{corollary}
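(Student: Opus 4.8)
The plan is to split into two cases according to whether the inequality constraint is active at the local minimizer $x^*$, and in each case reduce matters to results already established for (QQ1).

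First I would treat the interior case $q_1(x^*)<1$. Here $x^*$ lies in the interior of the feasible region, so it is an unconstrained local minimizer of the homogeneous quadratic $q_0$. The first- and second-order necessary conditions then give $A_0x^*=0$ and $A_0\succeq0$; since $q_0(x^*)=x^{*T}A_0x^*=0$ while $q_0(x)=x^TA_0x\ge0$ for every $x\in\R^n$, the point $x^*$ attains the global minimum of $q_0$ over all of $\R^n$, hence over the feasible region, and is globally optimal.

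Next I would treat the active case $q_1(x^*)=1$. Because $\{x:q_1(x)=1\}\subseteq\{x:q_1(x)\le1\}$ and contains $x^*$, the point $x^*$ is also a local minimizer of (QQ1). By Theorem \ref{th:HGTRS-l} it is then a global minimizer of (QQ1), so Lemma \ref{le:HGTRS-g} supplies a scalar $\alpha$ with $(A_0+\alpha A_1)x^*=0$, $x^{*T}A_1x^*=1$, and $A_0+\alpha A_1\succeq0$. It remains to promote global optimality from the equality-constrained problem to the inequality-constrained one, which only requires $\alpha\ge0$: for any feasible $x$ one computes $q_0(x)-q_0(x^*)+\alpha(q_1(x)-1)=x^T(A_0+\alpha A_1)x\ge0$ (using $q_0(x^*)=-\alpha$), so $q_0(x)-q_0(x^*)\ge\alpha(1-q_1(x))\ge0$ once $\alpha\ge0$ and $q_1(x)\le1$.

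The crux is therefore establishing $\alpha\ge0$, which is the only place the inequality structure, as opposed to the equality structure of (QQ1), must enter. I would argue this via the Karush--Kuhn--Tucker first-order necessary condition for the inequality-constrained local minimizer: since $q_1(x^*)=1$ forces $A_1x^*\neq0$ (so LICQ holds at the single active constraint), there is a nonnegative multiplier $\lambda$ with $(A_0+\lambda A_1)x^*=0$; subtracting this from $(A_0+\alpha A_1)x^*=0$ gives $(\alpha-\lambda)A_1x^*=0$, and $A_1x^*\neq0$ forces $\alpha=\lambda\ge0$. Equivalently, one can read off the sign of $\alpha$ directly by exhibiting a first-order feasible descent direction along which $q_0$ decreases whenever $\alpha<0$, contradicting local minimality. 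I expect this multiplier-sign step to be the only genuine obstacle, since Theorem \ref{th:HGTRS-l} already does the heavy lifting of upgrading the second-order condition on the tangent space $\{v:v^TA_1x^*=0\}$ to positive semidefiniteness of $A_0+\alpha A_1$ on all of $\R^n$.
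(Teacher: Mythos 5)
Your proposal is correct and follows essentially the same route as the paper: the interior case via $A_0\succeq 0$, and the active case by viewing $x^*$ as a local (hence global) minimizer of (QQ1), then matching the multiplier from Lemma \ref{le:HGTRS-g} with the nonnegative KKT multiplier of the inequality-constrained problem (using $A_1x^*\neq 0$) to conclude $\alpha\ge 0$ and finish with the same algebraic inequality. No substantive differences.
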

\begin{proof}
Let $x^*$ be a local minimizer of \eqref{QC1QP_I}. If $q_1(x^*)<1$, it holds that $A_0\succeq 0$ and hence $x^*$ is a global minimizer. Now we assume that
\begin{equation}\label{eq:QC1QP_bi}
	q_1(x^*)=1.
\end{equation}
Then $A_1x^*\neq0$ and LICQ holds at $x^*$. According to the KKT condition, there exists a scalar $\alpha\ge0$ such that
\begin{equation}\label{eq:alp}
	(A_0+\alpha A_1)x^*=0.
\end{equation}
Meanwhile, by \eqref{eq:QC1QP_bi}, $x^*$ is a local minimizer of (QQ1). It follows from Theorem \ref{th:HGTRS-l} that $x^*$ is a global minimizer of (QQ1). Therefore, by Lemma \ref{le:HGTRS-g}, there exists a scalar $\beta\in\R$ such that
\begin{eqnarray}
	&&(A_0+\beta A_1)x^*=0,\label{eq:beta}\\
	&&A_0+\beta A_1\succeq 0.\label{eq:beta_succ}
\end{eqnarray}
It follows from \eqref{eq:alp}-\eqref{eq:beta}, and the fact $A_1x^*\neq0$ that $\alpha=\beta$.
Then, for any $x\in\R^n$ such that $x^TA_1x\le1$, we have
\[
x^TA_0x=x^T(A_0+\beta A_1)x-\beta x^TA_1x\ge0-\beta=x^{*T}A_0x^*,
\]
where the inequality follows from \eqref{eq:beta_succ} and $\beta=\alpha\ge0$, and the last equality is due to \eqref{eq:QC1QP_bi} and \eqref{eq:beta}. Hence, $x^*$ is a global minimizer of \eqref{QC1QP_I}.
\end{proof}

\begin{corollary}\label{cor:HGTRS-l}
If $x^*$ is a local minimizer of \eqref{eq:HQCQP_2} with $q_2(x^*)<1$, then $x^*$ is a global minimizer of \eqref{eq:HQCQP_2}.
\end{corollary}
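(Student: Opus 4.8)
The plan is to exploit the fact that the inequality constraint is inactive at $x^*$ in order to reduce the problem locally to (QQ1), and then to transfer the resulting global optimality certificate back to the full problem \eqref{eq:HQCQP_2}. First I would observe that, since $q_2(x^*)<1$ and $q_2$ is continuous, there is a neighborhood of $x^*$ in which every point satisfying $q_1(x)=1$ automatically satisfies $q_2(x)<1$. On this neighborhood the feasible sets of \eqref{eq:HQCQP_2} and of (QQ1) therefore coincide, so $x^*$ is a local minimizer of (QQ1). Theorem \ref{th:HGTRS-l} then immediately upgrades this conclusion to: $x^*$ is a \emph{global} minimizer of (QQ1).

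Next I would appeal to Lemma \ref{le:HGTRS-g} to obtain a multiplier $\alpha\in\R$ satisfying $(A_0+\alpha A_1)x^*=0$, $x^{*T}A_1x^*=1$, and $A_0+\alpha A_1\succeq 0$. The key algebraic step, mirroring the end of the proof of Corollary \ref{cor:c1p1_ne}, is that the quantity $\alpha\,x^TA_1x$ is constant and equal to $\alpha$ on the entire equality surface $\{x:q_1(x)=1\}$. Thus for any feasible $x$ of \eqref{eq:HQCQP_2} (indeed for any $x$ with $q_1(x)=1$) one writes
\[
q_0(x)=x^T(A_0+\alpha A_1)x-\alpha\,x^TA_1x\ge -\alpha,
\]
using $A_0+\alpha A_1\succeq 0$, while $q_0(x^*)=x^{*T}(A_0+\alpha A_1)x^*-\alpha=-\alpha$. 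This yields $q_0(x)\ge q_0(x^*)$ for every feasible $x$, i.e.\ $x^*$ is a global minimizer of \eqref{eq:HQCQP_2}.

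The subtle point, and what I expect to be the main obstacle, is to avoid invoking the global optimality characterization of Theorem \ref{le:HCDT_g2} directly, since that would require verifying Polyak's regularity assumptions \eqref{eq:PolyakA2} and \eqref{eq:PolyakA1} for \eqref{eq:HQCQP_2}. The direct quadratic estimate above sidesteps these hypotheses entirely: because the inequality constraint plays no role once the complementary multiplier is forced to vanish by $q_2(x^*)<1$, the positive semidefinite certificate supplied by (QQ1) is already a global certificate for \eqref{eq:HQCQP_2}. Care must also be taken to confirm $x^*\neq 0$ and $A_1x^*\neq 0$, so that $x^*$ is genuinely feasible for (QQ1), but this follows from $q_1(x^*)=1$ exactly as in the proof of Theorem \ref{th:HGTRS-l}.
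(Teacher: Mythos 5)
Your proposal is correct and follows the same route as the paper: restrict to (QQ1) near $x^*$ (the feasible sets coincide there since $q_2(x^*)<1$), apply Theorem \ref{th:HGTRS-l} to conclude $x^*$ is a global minimizer of (QQ1), and transfer this back to \eqref{eq:HQCQP_2}. The only difference is in the final step, where you re-derive global optimality through the explicit multiplier certificate of Lemma \ref{le:HGTRS-g}; the paper instead just observes that the feasible region of \eqref{eq:HQCQP_2} is a subset of that of (QQ1), so a global minimizer of (QQ1) that is feasible for \eqref{eq:HQCQP_2} is automatically a global minimizer of \eqref{eq:HQCQP_2} --- this makes your certificate computation (and your concern about avoiding Polyak's assumptions) unnecessary, though not incorrect.
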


\begin{proof}
Since $q_2(x^*)<1$, $x^*$ is a local minimizer of (QQ1). According to Theorem \ref{th:HGTRS-l}, $x^*$ is a global minimizer of (QQ1). Since $x^*$ belongs to the feasible region of \eqref{eq:HQCQP_2}, which is a subset of that of (QQ1),  $x^*$ remains a global minimizer of \eqref{eq:HQCQP_2}.
\end{proof}

\begin{corollary}
The Rayleigh quotient problem \eqref{eq:fhtrs} with $A_1\succ0$ has no local non-global minimizer.
\end{corollary}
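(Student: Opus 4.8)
The plan is to exploit the degree-zero homogeneity of the Rayleigh quotient to reduce \eqref{eq:fhtrs} to (QQ1) and then invoke Theorem \ref{th:HGTRS-l}. Since $A_1\succ 0$, every nonzero $x$ satisfies $x^TA_1x>0$, so the objective $f(x)=x^TA_0x/(x^TA_1x)$ is well defined on $\R^n\setminus\{0\}$ and scale invariant, i.e. $f(tx)=f(x)$ for all $t\neq 0$. For any $x\neq 0$ the normalized point $y=x/\sqrt{x^TA_1x}$ lies on the ellipsoid $\{y:y^TA_1y=1\}$ and satisfies $f(y)=f(x)=y^TA_0y$. This identity shows that the infimum of $f$ over $\R^n\setminus\{0\}$ coincides with the minimum of (QQ1), which is attained because $A_1\succ 0$ makes the constraint surface a compact ellipsoid.

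First I would take an arbitrary local minimizer $x^*$ of \eqref{eq:fhtrs} and normalize it to $\bar x=x^*/\sqrt{x^{*T}A_1x^*}$. Using scale invariance, $\bar x$ is again a local minimizer of \eqref{eq:fhtrs} with the same objective value, and it is feasible for (QQ1). I would then check that $\bar x$ is in fact a local minimizer of (QQ1): for feasible points $x$ of (QQ1) sufficiently close to $\bar x$ we have $q_0(x)=f(x)\ge f(\bar x)=q_0(\bar x)$, since such points lie in the punctured neighborhood on which $\bar x$ minimizes $f$.

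Next, Theorem \ref{th:HGTRS-l} applies directly and tells me that $\bar x$ is a global minimizer of (QQ1). Combining this with the equivalence of the two minima established in the first step, $f(x^*)=f(\bar x)=q_0(\bar x)$ equals the global minimum of \eqref{eq:fhtrs}, so $x^*$ is itself a global minimizer. Since $x^*$ was an arbitrary local minimizer, \eqref{eq:fhtrs} admits no local non-global minimizer.

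The main obstacle I anticipate is the bookkeeping around scale invariance: a local minimizer of \eqref{eq:fhtrs} is only optimal within some punctured neighborhood, so I must verify carefully that intersecting this neighborhood with the constraint surface $\{x:x^TA_1x=1\}$ still yields a genuine local minimizer of (QQ1) rather than merely a critical point. The hypothesis $A_1\succ 0$ is exactly what guarantees both that the normalization map is everywhere well defined and that the reduction to (QQ1) is lossless, so this step should go through without further assumptions.
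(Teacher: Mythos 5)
Your proposal is correct and follows essentially the same route as the paper: normalize the local minimizer onto the surface $\{x: x^TA_1x=1\}$, use scale invariance to see it is a local (hence, by Theorem \ref{th:HGTRS-l}, global) minimizer of (QQ1), and pull global optimality back to \eqref{eq:fhtrs}. You merely spell out the verification that the normalized point is a local minimizer of (QQ1), which the paper leaves as ``one can verify.''
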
	
\begin{proof}
Let $x^*\neq 0$ be a local minimizer of \eqref{eq:fhtrs}. One can verify that
\[
y^*=\frac{x^*}{\sqrt{x^{*T}A_1x^*}}
\]
is a local minimizer of (QQ1). By Theorem \ref{th:HGTRS-l}, it holds that $y^*$ is a global minimizer of (QQ1). Then for any $x\neq 0$, we have
$$
\frac{x^TA_0x}{x^TA_1x}=\left({\frac{x}{\sqrt{x^{T}A_1x}}}\right)^TA_0\left(\frac{x}{\sqrt{x^{T}A_1x}}\right)\ge y^{*T}A_0y^*=\frac{{x^*}^TA_0x^*}{{x^*}^TA_1x^*},
$$
where the inequality follows from the global optimality of $y^*$ and the feasibility of  $x/\sqrt{x^{T}A_1x}$ for (QQ1). Hence $x^*$ is a global minimizer of \eqref{eq:fhtrs}.
\end{proof}

\begin{corollary}
The total least squares problem  \eqref{eq:TLS1} has no local non-global minimizer.
\end{corollary}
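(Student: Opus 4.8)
The plan is to homogenize \eqref{eq:TLS1} so that it becomes an instance of the Rayleigh quotient problem \eqref{eq:fhtrs} with a positive definite denominator matrix, and then to invoke the immediately preceding corollary. Introduce an auxiliary scalar variable $z\in\R$, write $y=(x,z)\in\R^{n+1}$, and set $M=[A,\,-b]^T[A,\,-b]\succeq 0$. The first step is to observe that the (TLS) objective $g(x):=\|Ax-b\|^2/(\|x\|^2+1)$ equals the Rayleigh quotient $R(y):=y^TMy/(y^Ty)$ evaluated at $y=(x,1)$, since $y^TMy=\|Ax-bz\|^2$ and $y^Ty=\|x\|^2+z^2$; and, more usefully, that for every $z\neq 0$ one has $R(x,z)=g(x/z)$ after cancelling the common factor $z^2$ from numerator and denominator. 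In particular $R$ is invariant under nonzero scaling, $R(ty)=R(y)$ for $t\neq 0$.

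The core step is to lift a local minimizer of \eqref{eq:TLS1} to a local minimizer of \eqref{eq:fhtrs}. Suppose $x^*$ is a local minimizer of \eqref{eq:TLS1} and set $y^*=(x^*,1)$. For every $(x,z)$ sufficiently close to $y^*$ one has $z\neq 0$ and, by continuity of $(x,z)\mapsto x/z$ near $z=1$, the point $x/z$ lies close to $x^*$; local optimality of $x^*$ then gives
\[
R(x,z)=g(x/z)\ge g(x^*)=R(y^*).
\]
Hence $y^*$ is a local minimizer of $R$, which is exactly \eqref{eq:fhtrs} with $A_0=M$ and $A_1=I\succ 0$.

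It remains to descend back to (TLS). By the preceding corollary, the Rayleigh quotient problem with a positive definite $A_1$ has no local non-global minimizer, so $y^*$ is in fact a global minimizer of $R$. Consequently, for every $x\in\R^n$,
\[
g(x)=R(x,1)\ge R(y^*)=g(x^*),
\]
which shows that $x^*$ is a global minimizer of \eqref{eq:TLS1}, as desired.

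The main obstacle will be the lifting step. One must verify carefully that the scale-invariance of $R$ together with the continuity of $(x,z)\mapsto x/z$ near $z=1$ genuinely transports \emph{local} (not merely global) minimality from $g$ on $\R^n$ to $R$ on a full neighborhood in $\R^{n+1}$; in particular, the chosen neighborhood of $y^*$ must be kept inside the region $z\neq 0$ on which the identity $R(x,z)=g(x/z)$ is valid, and one should confirm that the image of this neighborhood under $x\mapsto x/z$ stays within the (TLS)-neighborhood on which $x^*$ is known to be optimal. Once this is in place, the reduction to \eqref{eq:fhtrs} and the application of the previous corollary are immediate.
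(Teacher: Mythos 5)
Your proof is correct and follows essentially the same route as the paper: both homogenize \eqref{eq:TLS1} by adjoining a scalar $z$ and reduce to the homogeneous one-constraint problem, the paper via the Charnes--Cooper normalization onto the sphere $y^Ty+z^2=1$ (an instance of (QQ1)), and you via the scale-invariant Rayleigh quotient, whose no-local-non-global-minimizer property the paper's preceding corollary derives from (QQ1) by that very same normalization. Your transfer of local minimality through $(x,z)\mapsto x/z$ is the same step the paper carries out with a sequence-based contradiction argument, and both versions are valid.
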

\begin{proof}
Introducing the generalized Charnes-Cooper transformation \cite{schaible1974}
\begin{equation}\label{eq:CG}
y(x)=\frac{x}{\sqrt{1+x^Tx}}\in\R^n,\ z(x)=\frac{1}{\sqrt{1+x^Tx}}\in\R
\end{equation}
reformulates the right side of \eqref{eq:TLS1} as
\begin{equation}\label{eq:CG-1}
\min_{y\in\R^n, z\in\R}
\{
\|Ay-bz\|^2:~ y^Ty+z^2=1\}.
\end{equation}
Let $x^*$ be a local minimizer of \eqref{eq:TLS1}.
Define $y^*=y(x^*)$ and $z^*=z(x^*)$.
Then $z^*\neq 0$ and $x^*=y^*/z^*$. We claim that $(y^{*},z^*)$ is a local minimizer of \eqref{eq:CG-1}. Suppose, on the contrary,
there exist $(y_k,z_k)$  ($k=1,2,\cdots$) such that
$$
y_k^Ty_k+z_k^2=1,~z_k\neq 0~ (k=1,2,\cdots),~ \lim_{k\rightarrow+\infty}(y_k^T,z_k)^T=(y^{*T},z^*)^T
$$
and $\|Ay_k-bz_k\|^2<\|Ay^*-bz^*\|^2$. Define $x_k=y_k/{z_k}$. Then $\lim_{k\to\infty}x_k=x^*$ and
\[
\frac{\|Ax_k-b\|^2}{\|x_k\|^2+1}=\|Ay_k-bz_k\|^2<\|Ay^*-bz^*\|^2=
\frac{\|Ax^*-b\|^2}{\|x^*\|^2+1},
\]
which contradicts the fact that $x^*$ is a local minimizer of \eqref{eq:TLS1}.
By Theorem \ref{th:HGTRS-l}, $(y^{*},z^*)$ is a global minimizer of \eqref{eq:CG-1}. Then for any given $x$, let $(y,z)=(y(x),z(x))$ and we have
$$\frac{\|Ax-b\|^2}{\|x\|^2+1}=\|Ay-bz\|^2\ge \|Ay^*-bz^*\|^2=\frac{\|Ax^*-b\|^2}{\|x^*\|^2+1},$$
where the inequality follows from the global optimality of $(y^{*T},z^*)^T$ and the feasibility of  $(y^T,z)^T$ for \eqref{eq:CG-1}. Hence, $x^*$ is a global minimizer of \eqref{eq:TLS1}.
\end{proof}

\subsection{LICQ and local minimizers of \eqref{eq:HQCQP_2}}\label{s3.2}
We prove that any local minimizer of \eqref{eq:HQCQP_2} at which  LICQ does not hold must be a global minimizer.

We  additionally assume \eqref{eq:as2ee}. Otherwise, if the assumption \eqref{eq:as2ee} does not hold, we have
$$\{x:~q_1(x)=1,~ q_2(x)\le 1\}=\{x:~q_1(x)=1\}.$$
Then \eqref{eq:HQCQP_2} reduces to (QQ1), and hence it has no local non-global minimizer according to Theorem \ref{th:HGTRS-l}.

\begin{lemma}\label{re:indefinite}
Under the assumptions \eqref{eq:PolyakA1}-\eqref{eq:as2ee},  $A_2-A_1$ is indefinite.
\end{lemma}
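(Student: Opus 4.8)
The plan is to exhibit two vectors at which the quadratic form associated with $A_2-A_1$ takes opposite signs; since a symmetric matrix whose quadratic form attains both a strictly positive and a strictly negative value is indefinite, this will suffice, and no spectral computation is needed.

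First I would invoke the two witnesses supplied directly by the hypotheses. Assumption \eqref{eq:PolyakA1} provides $x^0$ with $q_1(x^0)=1$ and $q_2(x^0)<1$, while assumption \eqref{eq:as2ee} provides $x^1$ with $q_1(x^1)=1$ and $q_2(x^1)>1$. Because $q_i(x)=x^TA_ix$, evaluating the form of $A_2-A_1$ at these two points and using $q_1(x^0)=q_1(x^1)=1$ gives
\[
x^{0T}(A_2-A_1)x^0 = q_2(x^0)-q_1(x^0) = q_2(x^0)-1 < 0,
\]
\[
x^{1T}(A_2-A_1)x^1 = q_2(x^1)-q_1(x^1) = q_2(x^1)-1 > 0.
\]

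The strictly negative value at $x^0$ shows that $A_2-A_1$ is not positive semidefinite, and the strictly positive value at $x^1$ shows that it is not negative semidefinite; hence $A_2-A_1$ is indefinite. Equivalently, by the Rayleigh quotient characterization these two evaluations force $\lambda_{\min}(A_2-A_1)<0$ and a positive eigenvalue of $A_2-A_1$ to coexist.

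There is essentially no obstacle here: the conclusion is an immediate consequence of the defining strict inequalities in \eqref{eq:PolyakA1} and \eqref{eq:as2ee}. The only point worth recording is that \emph{both} strict inequalities ($q_2(x^0)<1$ and $q_2(x^1)>1$) are genuinely used—were either relaxed to an equality, the corresponding form value could vanish and the conclusion would weaken from ``indefinite'' to merely ``not definite.'' This lemma is clearly meant as a short structural fact to be fed into the subsequent LICQ analysis of \eqref{eq:HQCQP_2}.
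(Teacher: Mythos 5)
Your proof is correct and follows exactly the paper's own argument: evaluate the quadratic form of $A_2-A_1$ at the witnesses $x^0$ and $x^1$ from \eqref{eq:PolyakA1} and \eqref{eq:as2ee}, obtaining $(x^0)^T(A_2-A_1)x^0<0<(x^1)^T(A_2-A_1)x^1$, which forces indefiniteness. No differences worth noting.
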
	
\begin{proof}
The assumptions \eqref{eq:PolyakA1}-\eqref{eq:as2ee} imply that
\[
(x^0)^T(A_2-A_1)x^0<0< (x^1)^T(A_2-A_1)x^1.
\]
Therefore, $A_2-A_1$ is indefinite.
\end{proof}
	
\begin{theorem}\label{th:LICQ}
Suppose that Polyak's assumptions \eqref{eq:PolyakA2}-\eqref{eq:PolyakA1} and the assumption \eqref{eq:as2ee} hold. Let $x^*$ be a local minimizer of \eqref{eq:HQCQP_2} satisfying $q_2(x^*)=1$. If LICQ fails to hold at $x^*$, then $x^*$ is a global minimizer of \eqref{eq:HQCQP_2}.
\end{theorem}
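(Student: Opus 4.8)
The plan is to verify the global optimality conditions of Theorem~\ref{le:HCDT_g2} by extracting multipliers $\alpha\in\R$ and $\beta\ge0$ directly from the local optimality of $x^*$, the guiding principle being that the failure of LICQ forces a very rigid structure. First I would record that $q_1(x^*)=q_2(x^*)=1$ gives $A_1x^*\ne0$ and $A_2x^*\ne0$, so the failure of LICQ means $A_2x^*=\lambda A_1x^*$ for some $\lambda\ne0$; contracting with $x^*$ and using $q_2(x^*)=\lambda\,q_1(x^*)$ yields $\lambda=1$, i.e. $(A_2-A_1)x^*=0$. Thus $\nabla q_2(x^*)$ is parallel to $\nabla q_1(x^*)$, so the active inequality constraint is tangent to the manifold $M_1=\{x:q_1(x)=1\}$ at $x^*$, and its effect along $M_1$ is second-order and governed by the quadratic form $v\mapsto v^T(A_2-A_1)v$ on the tangent space $T=\{v:v^TA_1x^*=0\}$.

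Next I would exploit the splitting $\R^n=T\oplus\mathrm{span}\{x^*\}$, which is valid since $x^{*T}A_1x^*=1\ne0$. Because $(A_2-A_1)x^*=0$, for $u=v+sx^*$ with $v\in T$ one has $u^T(A_2-A_1)u=v^T(A_2-A_1)v$, so $A_2-A_1$ takes exactly the same set of values on $T$ as on $\R^n$. Since $A_2-A_1$ is indefinite by Lemma~\ref{re:indefinite}, it follows that $(A_2-A_1)|_T$ is indefinite as well; in particular the open cone $\{v\in T:v^T(A_2-A_1)v<0\}$ is nonempty, which simultaneously supplies the Slater point needed later and, via a short eigenvector-perturbation argument, spans $T$.

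The core step is to extract first- and second-order information \emph{without} a constraint qualification, which I would do with explicit feasible curves on $M_1$. For $v\in T$, a standard implicit-function-theorem construction gives a smooth curve $x(t)\subset M_1$ with $x(0)=x^*$ and leading direction $v$; staying on $M_1$ forces its acceleration $w$ to satisfy $w^TA_1x^*=-v^TA_1v$, whence $q_2(x(t))=1+t^2\,v^T(A_2-A_1)v+o(t^2)$ and the linear part of $q_0$ along the curve is $2t\,v^TA_0x^*$. For $v$ in the negative cone the curve is feasible for small $t$ of either sign, so local optimality forces $v^TA_0x^*=0$; as this cone spans $T$, we obtain $A_0x^*\perp T$, i.e. $A_0x^*+\gamma A_1x^*=0$ for some $\gamma\in\R$. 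Feeding stationarity back, the $t^2$-coefficient of $q_0$ becomes $v^T(A_0+\gamma A_1)v$, and feasibility plus local optimality (and then continuity) yield $v^T(A_0+\gamma A_1)v\ge0$ for every $v\in T$ with $v^T(A_2-A_1)v\le0$.

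Finally I would convert this into a global semidefiniteness certificate. Applying the S-lemma (Lemma~\ref{le:S-Lemma}) on $T$, with the Slater point from the second paragraph, produces $\beta\ge0$ such that $M(\beta):=(A_0+\gamma A_1)+\beta(A_2-A_1)\succeq0$ on $T$. Since $M(\beta)x^*=0$ and $\R^n=T\oplus\mathrm{span}\{x^*\}$, the cross terms vanish and $M(\beta)\succeq0$ on all of $\R^n$. Setting $\alpha=\gamma-\beta$ gives $A_0+\alpha A_1+\beta A_2=M(\beta)$, so all conditions of Theorem~\ref{le:HCDT_g2} hold, namely $(A_0+\alpha A_1+\beta A_2)x^*=0$, $A_0+\alpha A_1+\beta A_2\succeq0$, $\beta\ge0$, and $\beta(q_2(x^*)-1)=0$, certifying that $x^*$ is a global minimizer. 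The main obstacle is precisely this last lift from $T$ to $\R^n$: local optimality only controls the Hessian of the Lagrangian on the degenerate subspace $T$ under a sign restriction, and the two ingredients that rescue it are the S-lemma, which removes the sign restriction on $T$, together with the fact that $x^*$ lies in the kernel of $M(\beta)$, which is exactly what propagates positive semidefiniteness off of $T$.
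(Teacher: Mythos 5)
Your proposal is correct and follows essentially the same route as the paper: reduce the failure of LICQ to $(A_2-A_1)x^*=0$, extract first- and second-order conditions from feasible curves whose directions lie in the negative cone of the indefinite form $A_2-A_1$ (whose spanning property substitutes for the missing constraint qualification), and invoke the S-lemma to produce the multiplier $\beta\ge0$ feeding into Theorem~\ref{le:HCDT_g2}. The only real difference is technical rather than conceptual: the paper uses the explicitly normalized curve $(x^*+tv)/\sqrt{(x^*+tv)^TA_1(x^*+tv)}$ for arbitrary $v\in\R^n$ (together with Lemma~\ref{le:peng} for the spanning step), so its second-order inequality and S-lemma application live on all of $\R^n$, which lets it skip the final lift of positive semidefiniteness from the tangent space $T$ back to $\R^n$ via $M(\beta)x^*=0$ that your version requires.
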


\begin{proof}
	Since $q_2(x^*)=1$ and LICQ fails to hold at $x^*$, the gradients $A_1x^*$ and $A_2x^*$ are linearly dependent. Without loss of generality, we assume that there is a $\beta\in\R$ such that  $A_1x^*=\beta A_2x^*$. It follows from $q_1(x^*)=q_2(x^*)=1$ that $\beta=1$ and
\begin{eqnarray}\label{eq:x_depen}
	(A_2-A_1)x^*=0.
\end{eqnarray}
According to Lemma \ref{re:indefinite}, $A_2-A_1$ is indefinite. Then there is a nonzero $v\in\R^n$ satisfying
\begin{eqnarray}\label{eq:v_fea}
	v^T(A_2-A_1)v\le 0.
\end{eqnarray}
Given any such $v$, the parametric curve
$$
x(t)\coloneqq\frac{x^*+tv}{\sqrt{(x^*+tv)^TA_1(x^*+tv)}}
$$
is well defined, since ${x^*}^TA_1x^*=1$ and then  $(x^*+tv)^TA_1(x^*+tv)>0$ for sufficiently small $|t|$. It follows from the definition of $x(t)$ that
\begin{eqnarray}\label{eq:v_fea11}
	x(t)^TA_1x(t)=1.
\end{eqnarray}
Combining the inequality \eqref{eq:v_fea} with the equation \eqref{eq:x_depen} yields that
\begin{equation}\label{eq:v_fea1-2}
	x(t)^T(A_2-A_1)x(t)\le 0.
\end{equation}
It follows from \eqref{eq:v_fea11} and \eqref{eq:v_fea1-2}   that
\begin{eqnarray}\label{eq:v_fea12}
	x(t)^TA_2x(t)\le 1.
\end{eqnarray}
According to \eqref{eq:v_fea11} and \eqref{eq:v_fea12},
$x(t)$ is feasible for \eqref{eq:HQCQP_2} for sufficiently small $|t|$.

Consider the objective function value $q_0$ over the local feasible solution curve $x(t)$:
\[
\phi(t)\coloneqq q_0(x(t))=x(t)^TA_0x(t).
\]
Elementary analysis shows that
\[ \phi'(t)=2x(t)^TA_0x'(t),~\phi''(t)=2x(t)^TA_0x''(t)+2x'(t)^TA_0x'(t).
\]
We can further verify that $x(0)=x^*$ and
\[
x'(0)= v-(v^TA_1x^*)x^*,~x''(0)=[3{(v^TA_1x^*)}^2-v^TA_1v]x^*-2(v^TA_1x^*)v.
\]
Then, we have
\begin{eqnarray}\label{eq:Q}
	&& \phi'(0)=2v^T[A_0+wA_1]x^*,\label{eq:q'}\\
	&&\phi''(0)=-8(v^TA_1x^*)v^T(A_0+wA_1)x^*+2v^T(A_0+wA_1)v, \label{eq:q''}
	\end{eqnarray}
where $w=-x^{*T}A_0x^*$. Since $x^*$ is a local minimizer of \eqref{eq:HQCQP_2}, the origin $0$ is a local minimizer of $\phi(t)$. It holds that $\phi'(0)=0$ and $\phi''(0)\ge 0$. By \eqref{eq:v_fea} and \eqref{eq:q'}, we have
\[
v^T(A_2-A_1)v\le 0 ~\Longrightarrow~ 	v^T(A_0+wA_1)x^*=0,
\]
which implies that
\begin{eqnarray}\label{eq:v_firstorder}
	v\in {\rm span}\{v:\ v^T(A_2-A_1)v\le0\} ~\Longrightarrow~ 	v^T(A_0+wA_1)x^*=0.
\end{eqnarray}
Since $A_2-A_1$ is indefinite, by Lemma \ref{le:peng}, we have
\[
\R^n={\rm span}\{v:\ v^T(A_2-A_1)v=0\} \subseteq
{\rm span}\{v:\ v^T(A_2-A_1)v\le0\}\subseteq\R^n.
\]
Combining this fact with \eqref{eq:v_firstorder} yields that
	\begin{eqnarray}
	(A_0+wA_1)x^*=0. \label{eq:x_firstorder}
	\end{eqnarray}
By substituting \eqref{eq:x_firstorder} into \eqref{eq:q''}, we obtain
	\begin{eqnarray}
	\phi''(0)=	2v^T(A_0+wA_1)v.\nonumber
	\end{eqnarray}
It follows from \eqref{eq:v_fea} and $\phi''(0)\ge0$  that
	\begin{eqnarray}
	v^T(A_2-A_1)v\le 0 ~\Longrightarrow~ v^T(A_0+wA_1)v\ge 0.\nonumber
	\end{eqnarray}
By Lemma \ref{le:S-Lemma}, there exists a scalar $\beta\ge 0$ such that
\begin{equation}\nonumber
A_0+wA_1+\beta(A_2-A_1)\succeq 0.
\end{equation}
According to \eqref{eq:x_depen} and \eqref{eq:x_firstorder}, we obtain \eqref{eq:global_grad}
	with $\alpha=w-\beta$ and $\beta\ge 0$. Therefore, according to Theorem \ref{le:HCDT_g2},  $x^*$ is a global minimizer of \eqref{eq:HQCQP_2}.
\end{proof}

\section{Optimality conditions for local non-global minimizers of \eqref{eq:HQCQP_2}}\label{sec:OCjbfqj}

For \eqref{eq:HQCQP_2}, we first study necessary optimality conditions for  local non-global minimizers, and then establish a necessary and sufficient condition for the strict local non-global minimizer.
\subsection{Necessary conditions for local non-global minimizers}\label{s4.1}
Let $x^*$ be a local non-global minimizer of \eqref{eq:HQCQP_2}. According to Corollary \ref{cor:HGTRS-l}, we have
\begin{equation}\label{eq:kxxlj}
	q_1(x^*)= q_2(x^*)=1.
\end{equation}
Theorem \ref{th:LICQ} implies that LICQ holds at $x^*$. We first show that
 strict complementary condition holds at $x^*$.
According to Lemma \ref{le:cl_ne}, the Hessian of the Lagrangian at $x^*$ has at most two negative eigenvalues. We further prove that it has exactly one negative eigenvalue. 
\begin{theorem}\label{th:HCDT_l1}
Suppose that Polyak's assumptions \eqref{eq:PolyakA2}-\eqref{eq:PolyakA1} hold.
Strict complementary condition holds at the local non-global minimizer of \eqref{eq:HQCQP_2}.
\end{theorem}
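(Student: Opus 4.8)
The plan is to reduce strict complementarity to showing that the inequality multiplier is strictly positive. By Corollary \ref{cor:HGTRS-l}, a local non-global minimizer $x^*$ must satisfy $q_2(x^*)=1$, so that together with feasibility $q_1(x^*)=q_2(x^*)=1$; by Theorem \ref{th:LICQ}, LICQ holds at $x^*$, which forces the KKT multipliers $\alpha\in\R$ and $\beta\ge0$ in \eqref{eq:global_grad}--\eqref{eq:ghx} to be unique. Since the inequality constraint is active at $x^*$, strict complementarity is precisely the assertion $\beta>0$. I would therefore argue by contradiction: suppose $\beta=0$ and show that $x^*$ is then forced to be a global minimizer, contradicting its non-global status.

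Assuming $\beta=0$, write $M:=A_0+\alpha A_1$, so that \eqref{eq:global_grad} reads $Mx^*=0$. The plan is to upgrade the standard second-order necessary condition into global positive semidefiniteness of $M$. Since $\beta=0$, Lemma \ref{le:cl_ne} supplies \eqref{eq:cl_ne2}, namely $v^TMv\ge0$ for every $v$ with $v^TA_1x^*=0$ and $v^TA_2x^*\le0$. First I would remove the inequality on the cone by sign-symmetry: if $v^TA_1x^*=0$ but $v^TA_2x^*>0$, then $-v$ lies in the cone, and $v^TMv=(-v)^TM(-v)\ge0$; hence $v^TMv\ge0$ on the entire hyperplane $H=\{v:v^TA_1x^*=0\}$. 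Next I would remove the equality constraint using $Mx^*=0$: given an arbitrary $v\in\R^n$, set $c=v^TA_1x^*$ and $w=v-cx^*$; because $x^{*T}A_1x^*=q_1(x^*)=1$ we have $w^TA_1x^*=0$, so $w\in H$, and the cross terms vanish since $Mx^*=0$, giving $v^TMv=w^TMw\ge0$. Thus $M=A_0+\alpha A_1\succeq0$.

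With $Mx^*=0$, $q_1(x^*)=1$, and $A_0+\alpha A_1\succeq0$ in hand, Lemma \ref{le:HGTRS-g} certifies that $x^*$ is a global minimizer of (QQ1). Since the feasible region of \eqref{eq:HQCQP_2} is contained in that of (QQ1), $x^*$ would also be a global minimizer of \eqref{eq:HQCQP_2}, contradicting the assumption that it is non-global. The main obstacle is the middle step: the second-order necessary condition only gives positive semidefiniteness of $M$ on the critical cone cut out by $v^TA_1x^*=0$ and $v^TA_2x^*\le0$, and the crux is to exploit both the sign-symmetry of this cone and the kernel relation $Mx^*=0$ to propagate semidefiniteness to all of $\R^n$. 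Everything else is routine substitution.
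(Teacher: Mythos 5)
Your proof is correct and follows essentially the same route as the paper's: assume $\beta=0$, upgrade the second-order necessary condition \eqref{eq:cl_ne2} to $A_0+\alpha A_1\succeq 0$ by combining sign-symmetry of the quadratic form with the projection $v\mapsto v-(v^TA_1x^*)x^*$ (which uses $q_1(x^*)=q_2(x^*)=1$ and $(A_0+\alpha A_1)x^*=0$), and then derive a contradiction with non-global optimality. The only cosmetic differences are that you apply the two extension steps in the opposite order and close the argument via Lemma \ref{le:HGTRS-g} for (QQ1) together with feasible-set containment, whereas the paper invokes Theorem \ref{le:HCDT_g2} directly; both are valid.
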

\begin{proof}
Let $x^*$ be a local non-global minimizer of \eqref{eq:HQCQP_2}. As mentioned above, we have
\eqref{eq:kxxlj} and LICQ holds at $x^*$. By Lemma  \ref{le:cl_ne}, there exist $\alpha\in\R$ and $\beta\ge0$ such that \eqref{eq:global_grad}-\eqref{eq:ghx} hold.
Now it is sufficient to prove $\beta>0$. Suppose, on the contrary, it holds that $\beta=0$. We first claim that \begin{equation}\label{eq:succeq1}
v^T(A_0+\alpha A_1)v\ge0,\ \forall  v\in\R^n {\rm\  such \ that\ } v^T(A_2-A_1)x^*\le0.
\end{equation}
Notice that
\[
v^T(A_0+\alpha A_1)v=(-v)^T(A_0+\alpha A_1)(-v).
\]
It follows from \eqref{eq:succeq1} that
\begin{equation}\label{eq:succeq}
	A_0+\alpha A_1\succeq 0.
\end{equation}
Since \eqref{eq:global_grad}-\eqref{eq:ghx} and  \eqref{eq:succeq} hold with $\alpha\in\R$, $\beta=0$,
according to Theorem \ref{le:HCDT_g2},
$x^*$ is a global minimizer of \eqref{eq:HQCQP_2}. We obtain a contradiction as $x^*$ is assumed to be a local non-global minimizer.

Now we prove \eqref{eq:succeq1}. Suppose, on  the contrary, \eqref{eq:succeq1} does not hold, there exists a vector $v\in\R^n$ such that
\begin{eqnarray}
	&&v^T(A_2-A_1)x^*\le0,\label{eq:v'2}\\
	&& v^T(A_0+\alpha A_1)v<0.\label{eq:v''2}
\end{eqnarray}
Define $\bar v=v-(v^TA_1x^*)x^*$.
It follows from \eqref{eq:kxxlj} and  \eqref{eq:v'2} that
$\bar v^TA_1x^*=0$ and $\bar v^TA_2x^*\le0$.
By \eqref{eq:global_grad},  \eqref{eq:v''2} and $\beta=0$, we obtain
\[
\bar v^T(A_0+\alpha A_1)\bar v<0,
\]
which contradicts the second-order necessary condition \eqref{eq:cl_ne2}.
\end{proof}	
	
\begin{theorem}\label{th:HCDT_l}
Suppose that Polyak's assumptions \eqref{eq:PolyakA2}-\eqref{eq:PolyakA1} hold. Let $x^*$ be a local non-global minimizer of \eqref{eq:HQCQP_2}.
There exist $\alpha\in\R$ and $\beta>0$ such that
\eqref{eq:global_grad}-\eqref{eq:ghx} hold. Then $A_0+\alpha A_1+\beta A_2$ has exactly one negative eigenvalue.
\end{theorem}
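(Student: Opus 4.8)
The plan is to bound the number of negative eigenvalues of $H:=A_0+\alpha A_1+\beta A_2$ from both sides. Since $x^*$ is a local non-global minimizer, Theorem~\ref{th:HCDT_l1} gives strict complementarity ($\beta>0$) and Theorem~\ref{th:LICQ} gives LICQ, so the gradients $g_1:=A_1x^*$ and $g_2:=A_2x^*$ are linearly independent; in particular $g_2-g_1=(A_2-A_1)x^*\neq0$. The stationarity condition \eqref{eq:global_grad} reads $Hx^*=0$. For the lower bound I would argue that $H$ has at least one negative eigenvalue: otherwise $H\succeq0$, and then \eqref{eq:global_grad}--\eqref{eq:ghx} together with $H\succeq0$ are exactly the hypotheses of Theorem~\ref{le:HCDT_g2}, forcing $x^*$ to be a global minimizer, a contradiction.

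The heart of the proof is a strengthened second-order necessary condition, namely that $H\succeq0$ on the \emph{codimension-one} hyperplane $(g_2-g_1)^\perp$ (the textbook condition of Lemma~\ref{le:cl_ne} only gives this on the codimension-two space $\{v:v^Tg_1=v^Tg_2=0\}$). To obtain it I exploit homogeneity. For small $p$ set $y(p):=(x^*+p)/\sqrt{q_1(x^*+p)}$, which is well defined and satisfies $q_1(y(p))=1$. Writing $c_0:=q_0(x^*)$ and using that $q_0,q_1,q_2$ are homogeneous of degree two, one checks $q_2(y(p))\le1\iff G(p)\le0$ and $q_0(y(p))<c_0\iff F(p)<0$, where $G(p):=(x^*+p)^T(A_2-A_1)(x^*+p)$ and $F(p):=(x^*+p)^T(A_0-c_0A_1)(x^*+p)$. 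A direct computation using $-c_0=\alpha+\beta$ and $Hx^*=0$ yields the exact identity $F(p)+\beta G(p)=p^THp$, i.e. $F(p)=p^THp-\beta G(p)$.

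Given this identity, suppose for contradiction that some $w$ with $(g_2-g_1)^Tw=0$ satisfies $w^THw<0$. I would take the perturbation $p=\epsilon w+\epsilon^2 r$ and choose $r$ so that the scalar $(g_2-g_1)^Tr$ makes the leading coefficient of $G$ equal to a small negative number $-\delta$; then $G(p)=-\epsilon^2\delta+O(\epsilon^3)<0$ (feasibility) while $F(p)=\epsilon^2\big(w^THw+\beta\delta\big)+O(\epsilon^3)<0$ once $\delta<-w^THw/\beta$ and $\epsilon$ is small. Thus $y(p)$ is feasible with $q_0(y(p))<c_0$, arbitrarily close to $x^*$, contradicting local minimality. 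Hence $w^THw\ge0$ for every $w\in(g_2-g_1)^\perp$, so $H$ has at most one negative eigenvalue; together with the lower bound, exactly one.

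I would expect the main obstacle to be the passage from the standard codimension-two second-order condition to the sharper codimension-one statement. The usable descent directions are exactly those tangent to the active inequality ($(g_2-g_1)^Tp=0$), since moving strictly into $q_2<1$ reintroduces a first-order increase of $q_0$ through the KKT balance $\nabla F(0)=-\beta\,\nabla G(0)$; the role of the second-order correction $r$ and of homogeneity is precisely to neutralize this first-order term while extracting the negative curvature $w^THw$. Care is needed to verify the exact quadratic identity and that $y(p)$ remains feasible to the relevant order.
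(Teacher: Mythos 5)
Your proof is correct, and its overall skeleton matches the paper's: both get ``at least one negative eigenvalue'' from Theorem~\ref{le:HCDT_g2} (if $H:=A_0+\alpha A_1+\beta A_2\succeq0$ then $x^*$ would be global), and ``at most one'' by showing $H$ is positive semidefinite on the codimension-one hyperplane $\{v:\,v^T(A_2-A_1)x^*=0\}$, which is a genuine hyperplane because LICQ gives $(A_2-A_1)x^*\neq0$. Where you genuinely diverge is in how that hyperplane claim is established. The paper obtains it in two lines from the already-stated second-order necessary condition: given $v$ in the hyperplane with $v^THv<0$, the shifted vector $\bar v=v-(v^TA_1x^*)x^*$ lies in the codimension-two critical subspace $\{u:\,u^TA_1x^*=u^TA_2x^*=0\}$ and satisfies $\bar v^TH\bar v=v^THv<0$ because $Hx^*=0$, contradicting \eqref{eq:cl_ne} of Lemma~\ref{le:cl_ne}. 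You instead rebuild a feasible descent curve from scratch: the normalization $y(p)=(x^*+p)/\sqrt{q_1(x^*+p)}$, the exact identity $F(p)+\beta G(p)=p^THp$ (which checks out, since $q_0(x^*)=-\alpha-\beta$ follows from $x^{*T}Hx^*=0$ and $q_1(x^*)=q_2(x^*)=1$), and the correction $\epsilon^2 r$ with $2r^T(A_2-A_1)x^*$ chosen to make the $\epsilon^2$-coefficient of $G$ equal to $-\delta$ with $0<\delta<-w^THw/\beta$ are all sound, and they do produce feasible points arbitrarily close to $x^*$ with strictly smaller objective. Your route is more self-contained---it uses only local minimality rather than the packaged necessary condition---at the cost of more computation; the paper's shift trick buys brevity by exploiting $Hx^*=0$ to upgrade the codimension-two condition to the codimension-one one for free.
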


\begin{proof}
 First, \eqref{eq:kxxlj} holds at $x^*$.
By Theorem \ref{th:HCDT_l1}, there exist $\alpha\in\R$ and $\beta>0$ such that
\eqref{eq:global_grad}-\eqref{eq:ghx} hold. Lemma  \ref{le:cl_ne} implies  \eqref{eq:cl_ne}.
We claim that
\begin{equation}\label{eq:th4.2}
	v^T(A_0+\alpha A_1+\beta A_2)v\ge0,\ \forall v\in\R^n {\rm\  such \ that\ } v^T(A_2-A_1)x^*=0,
\end{equation}
which implies that $A_0+\alpha A_1+\beta A_2$ has at most one negative eigenvalue. Suppose, on the contrary, there is a $v\in\R^n$ such that
\begin{eqnarray}
&&v^T(A_2-A_1)x^*=0,\label{eq:v'} \\
&& v^T(A_0+\alpha A_1+\beta A_2)v<0.\label{eq:v''}
\end{eqnarray}
Define $
\bar v=v-(v^TA_1x^*)x^*$.
By \eqref{eq:kxxlj} and  \eqref{eq:v'}, we have $\bar v^TA_1x^*=\bar v^TA_2x^*=0$. It follows from \eqref{eq:global_grad} and \eqref{eq:v''} that
\[
\bar v^T(A_0+\alpha A_1+\beta A_2)\bar v<0,
\]
which contradicts \eqref{eq:cl_ne}.

On the other hand, since $x^*$ is a non-global minimizer of \eqref{eq:HQCQP_2}, $A_0+\alpha A_1+\beta A_2$ has at least one negative eigenvalue by Theorem \ref{le:HCDT_g2}. The proof is complete.
\end{proof}

\begin{remark}
For \eqref{eq:gcdt},
the Hessian of the Lagrangian at the local minimizer has at most two negative eigenvalues \cite{Peng97}. There exists an example  to show the tightness of this over-estimation \cite{Peng97}.
Combining Lemma \ref{le:HGTRS-g} and
Theorem \ref{th:HCDT_l} implies that
the homogeneous \eqref{eq:HQCQP_2} has a tighter necessary optimality condition.
\end{remark}

\subsection{Necessary and sufficient condition for the strict local non-global minimizer}\label{sec:cytjygjx}
In this subsection, we show that
 the standard second-order sufficient optimality condition  is necessary for any strict local non-global minimizer of \eqref{eq:HQCQP_2}.


\begin{theorem}\label{th:HCDT_l_1}
	Suppose that Polyak's assumptions \eqref{eq:PolyakA2}-\eqref{eq:PolyakA1} hold. Then $x^*$ is a strict local non-global minimizer of \eqref{eq:HQCQP_2} if and only if there exist $\alpha\in\R$ and $\beta>0$ satisfying 	\eqref{eq:global_grad}-\eqref{eq:ghx}, \eqref{eq:cl_su} and $A_0+\alpha A_1+\beta A_2\nsucceq 0$.
\end{theorem}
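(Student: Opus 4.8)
The plan is to prove the two implications separately, handling sufficiency first since it is the shorter half. Throughout write $M:=A_0+\alpha A_1+\beta A_2$. For \textbf{sufficiency}, assume $\alpha\in\R$ and $\beta>0$ satisfy \eqref{eq:global_grad}--\eqref{eq:ghx}, \eqref{eq:cl_su} and $M\nsucceq 0$. Since \eqref{eq:ghx} with $\beta>0$ forces $q_2(x^*)=1$ and \eqref{eq:feasible} gives feasibility, Lemma~\ref{le:cl_su} applies verbatim and yields that $x^*$ is a \emph{strict} local minimizer; it remains only to show it is non-global. I would first argue that LICQ holds at $x^*$: if $A_1x^*$ and $A_2x^*$ were linearly dependent, then (using $q_1(x^*)=q_2(x^*)=1$, so $A_ix^*\neq0$) one gets $(A_2-A_1)x^*=0$, whence the critical cone in \eqref{eq:cl_su} collapses to the $(n-1)$-dimensional hyperplane $\{v:v^TA_1x^*=0\}$. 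Then $M$ is positive definite on a hyperplane, hence has at most one nonpositive eigenvalue, while $Mx^*=0$ supplies a zero eigenvalue; thus $M\succeq0$, contradicting $M\nsucceq0$. So $A_1x^*,A_2x^*$ are independent and the KKT multipliers are unique. If $x^*$ were global, Theorem~\ref{le:HCDT_g2} would furnish $\bar\alpha\in\R,\bar\beta\ge0$ with $(A_0+\bar\alpha A_1+\bar\beta A_2)x^*=0$ and $A_0+\bar\alpha A_1+\bar\beta A_2\succeq0$; subtracting \eqref{eq:global_grad} and using uniqueness gives $\bar\alpha=\alpha,\bar\beta=\beta$, whence $M\succeq0$, a contradiction.

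For \textbf{necessity}, let $x^*$ be a strict local non-global minimizer. Almost everything except the strict inequality in \eqref{eq:cl_su} is already in hand: Corollary~\ref{cor:HGTRS-l} gives \eqref{eq:kxxlj}, i.e. $q_1(x^*)=q_2(x^*)=1$, hence \eqref{eq:feasible} and \eqref{eq:ghx}; Theorem~\ref{th:LICQ} gives LICQ; Theorem~\ref{th:HCDT_l1} gives strict complementarity, producing $\alpha\in\R,\beta>0$ with \eqref{eq:global_grad}; Theorem~\ref{th:HCDT_l} shows that $M$ has exactly one negative eigenvalue, so $M\nsucceq0$; and Lemma~\ref{le:cl_ne} (with $\beta>0$) gives the \emph{non-strict} form $v^TMv\ge0$ on the critical cone $C:=\{v:v^TA_1x^*=0,\ v^TA_2x^*=0\}$. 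Thus the entire content of the theorem reduces to upgrading this to the strict inequality: I must show $v^TMv>0$ for every nonzero $v\in C$.

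The key reduction I would exploit is that on the intersection manifold $\mathcal M:=\{x:q_1(x)=1,\,q_2(x)=1\}$ one has the exact identity $q_0(x)-q_0(x^*)=x^TMx$, since $q_0=x^TMx-\alpha q_1-\beta q_2$ and $q_0(x^*)=-\alpha-\beta$. Hence strict local minimality forces $x^TMx>0$ for every $x\in\mathcal M$ near $x^*$ with $x\neq x^*$. Arguing by contradiction, suppose \eqref{eq:cl_su} fails, so there is $0\neq v_0\in C$ with $v_0^TMv_0=0$; because $M\succeq0$ on $C$, the vector $v_0$ minimizes $v^TMv$ over $C$, so $Mv_0\in\mathrm{span}\{A_1x^*,A_2x^*\}$. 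Using LICQ, I would build a smooth feasible curve $x(t)\in\mathcal M$ with $x(0)=x^*$ and $x'(0)=v_0$, solving at each order the two linear conditions (solvable since $A_1x^*,A_2x^*$ are independent) that keep $x(t)$ on $\mathcal M$. Expanding $g(t):=x(t)^TMx(t)$ and using $Mx^*=0$ and $v_0^TMv_0=0$ gives $g(0)=g'(0)=g''(0)=0$, and the cubic coefficient is pinned to $0$ by mere local minimality (it equals a fixed multiple of $c_1v_0^TA_1v_0+c_2v_0^TA_2v_0$, where $Mv_0=c_1A_1x^*+c_2A_2x^*$). The analysis then descends to the quartic term, where the residual freedom in the curve, combined with the fact that $M$ has \emph{exactly} one negative eigenvalue, is used to exhibit a feasible $y\in\mathcal M$, $y\neq x^*$, arbitrarily close to $x^*$ with $y^TMy\le0$, i.e. $q_0(y)\le q_0(x^*)$, contradicting strictness.

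The \textbf{main obstacle} is precisely this last construction: converting the degenerate direction $v_0$ into a genuine non-increasing feasible perturbation. The subtlety is that the existence of a null direction of $M|_C$ does not by itself destroy strictness (a degenerate minimum can still be strict, as $t\mapsto t^4$ shows), so one cannot stop at second order; one must push the curve expansion to higher order and use quantitatively that $M$ has only one negative eigenvalue (Theorem~\ref{th:HCDT_l}), all the while lacking a closed-form projection onto $\mathcal M$. I expect the bulk of the effort here, likely through a careful reparametrization or a low-dimensional reduction onto $\mathrm{span}\{v_0\}$ together with the negative eigenvector of $M$, and this step is the technical heart of the result.
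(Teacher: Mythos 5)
Your overall route is the paper's route: the sufficiency half via Lemma~\ref{le:cl_su} plus Theorem~\ref{le:HCDT_g2} (your uniqueness-of-multipliers argument for non-globality is in fact more explicit than the paper's one-line claim, and correct), and the necessity half by assembling Corollary~\ref{cor:HGTRS-l}, Theorems~\ref{th:LICQ}, \ref{th:HCDT_l1}, \ref{th:HCDT_l} and Lemma~\ref{le:cl_ne}, then arguing by contradiction from a degenerate direction $v_0$ in the critical cone. Your observations that $Mv_0\in\mathrm{span}\{A_1x^*,A_2x^*\}$, that $x^{*T}Mv_0=0$ forces $Mv_0=\theta(A_2-A_1)x^*$, and that the cubic coefficient of $q_0$ along a feasible curve with $x'(0)=v_0$ is a fixed nonzero multiple of $\theta\, v_0^T(A_2-A_1)v_0$ (independent of the residual freedom in the curve) all match the paper's computation in its Case~(c).

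The genuine gap is the step you yourself flag as the ``main obstacle.'' Once local minimality forces $\theta\, v_0^T(A_2-A_1)v_0=0$, you are left with exactly two subcases, and you have not carried out either. (i) If $v_0^T(A_2-A_1)v_0=0$, no quartic analysis is needed: the arc $x(t)=(x^*+tv_0)/\sqrt{(x^*+tv_0)^TA_1(x^*+tv_0)}$ lies entirely on $\mathcal M$ and satisfies $x(t)^TMx(t)\equiv0$, so $q_0$ is exactly constant along it, contradicting strictness. (ii) If $Mv_0=0$ but $v_0^T(A_2-A_1)v_0\neq0$, the paper's construction is the content you are missing: take the eigenvector $w$ of $M$ for its unique negative eigenvalue; from \eqref{eq:th4.2} one gets $w^T(A_2-A_1)x^*\neq0$ (this is precisely where ``exactly one negative eigenvalue'' enters quantitatively), so the implicit function theorem solves $(x^*+tv_0+sw)^T(A_2-A_1)(x^*+tv_0+sw)=0$ for $s=s(t)$ with $s(t)=O(t^2)$ and, crucially, $s(t)\neq0$ for $t\neq0$ because $v_0^T(A_2-A_1)v_0\neq0$; normalizing by $A_1$ gives a feasible curve with $q_0(x(t))-q_0(x^*)$ proportional to $s(t)^2\,w^TMw<0$, contradicting local minimality. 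Without verifying $w^T(A_2-A_1)x^*\neq0$ and exhibiting this curve, the assertion that the quartic term ``is used to exhibit a feasible $y$ with $y^TMy\le0$'' remains a statement of intent rather than a proof, so as written the necessity half is incomplete at its decisive point.
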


\begin{proof}
	The sufficiency directly follows from Lemma \ref{le:cl_su} and Theorem \ref{le:HCDT_g2}. Now we prove the necessity. Let $x^*$ be a strict local non-global minimizer of \eqref{eq:HQCQP_2}. Then \eqref{eq:kxxlj} holds, and  \eqref{eq:cl_ne} follows from Lemma  \ref{le:cl_ne}.
By Theorem \ref{th:HCDT_l1}, there exist $\alpha\in\R$ and $\beta>0$ such that
\eqref{eq:global_grad}-\eqref{eq:ghx} hold.
The corresponding Lagrangian reads as follows
\[
L(x):=q_0(x)+\alpha(q_1(x)-1)+\beta(q_2(x)-1)
=x^TGx-\alpha-\beta,
\]
where $G=A_0+\alpha A_1+\beta A_2$.  Our goal is to prove \eqref{eq:cl_su}. Suppose, on the contrary,  there exists a nonzero vector $\bar v$ such that
	\begin{eqnarray}
		&&\bar v^TA_1x^*=0,~ \bar v^TA_2x^*=0,\label{eq:fbarv}\\
		&&\bar v^TG\bar v=0.  \label{eq:sbarv}
	\end{eqnarray}
	According to \eqref{eq:kxxlj} and \eqref{eq:fbarv}, $\bar v$ and $x^*$ are linearly independent. We break down the remainder of the proof into  three cases.
	
Case (a). We first assume that
	\begin{equation}
		\bar v^T(A_2-A_1)\bar v=0.\label{eq:lqv}
	\end{equation}
The following parametric curve
\[
x(t):=\frac{x^*+t\bar v}{\sqrt{(x^*+t\bar v)^TA_1(x^*+t\bar v)}}
=\frac{x^*+t\bar v}{\sqrt{1+t^2\bar v^TA_1 \bar v}}
\]
is well defined for any sufficiently small $|t|$, where the equality holds due to \eqref{eq:kxxlj} and \eqref{eq:fbarv}. Since $\bar v$ and $x^*$ are linearly independent, we have $x(t)\neq x^*$ for $t\neq0$. It follows from \eqref{eq:kxxlj}, \eqref{eq:fbarv} and \eqref{eq:lqv} that
	\begin{eqnarray}\label{eq:ysqlxd}
		&&x(t)^T(A_2-A_1)x(t)=0.
	\end{eqnarray}
By the definition of $x(t)$ and \eqref{eq:ysqlxd}, we have
	\begin{equation}\label{eq:t_fea}
	q_1(x(t))=q_2(x(t))=1.
	\end{equation}
Notice that it follows from \eqref{eq:global_grad} and \eqref{eq:sbarv} that
\[
(x^*+t\bar v)^TG(x^*+t\bar v)=0.
\]
Therefore, for any sufficiently small $|t|$, we have
$$
q_0(x(t))=L(x(t))= x(t)^TGx(t)-(\alpha+\beta)=-(\alpha+\beta)
	=L(x^*)=q_0(x^*),
$$
which contradicts the fact that $x^*$ is a strict minimizer of \eqref{eq:HQCQP_2}.
	
Case (b).  We assume
	\begin{equation}
		\bar v^T(A_2-A_1)\bar v\neq 0, \label{eq:flqv}
	\end{equation}
and $ \bar v$ is an eigenvector of $G$ corresponding to its zero eigenvalue, i.e.,
	\begin{equation}\label{eq:beta0}
		G\bar v=0.
	\end{equation}
By \eqref{eq:global_grad} and \eqref{eq:beta0}, the two linear independent vectors $x^*$ and $\bar v$ are both eigenvectors of $G$ corresponding to zero eigenvalue.
Notice that $G$ has exactly one negative eigenvalue by Theorem \ref{th:HCDT_l}. Let $w\neq 0$ be an eigenvector of $G$ corresponding to the unique negative eigenvalue. We must have
	\begin{equation}\label{eq:hcdt_v1}
		w^T(A_2-A_1)x^*\neq 0,
	\end{equation}
	otherwise it follows from \eqref{eq:th4.2} that $w^TGw\ge0$, which contradicts the definition of $w$.
	
	Define $f:\R^2\mapsto \R$ as
	\begin{equation*}
		f(s, t)\coloneqq{(x^*+sw+t\bar v)}^T(A_2-A_1)(x^*+sw+t\bar v).
	\end{equation*}
	It follows from \eqref{eq:kxxlj} and \eqref{eq:hcdt_v1} that
	$$
	f(0, 0)= 0,~ \frac{\partial f}{\partial s}(0, 0)=2w^T(A_2-A_1)x^*\neq 0.
	$$
	According to the implicit function theorem, there exist
	two open intervals $I,I'\subset\R$ with $0\in I\cap I'$ and
	a continuous function  $s:I\mapsto I'$ such that
	$s(0)=0$, and for any $t\in I$, the point $s(t)$ is the unique point in $I'$ satisfying
	\begin{equation}\label{eq:yhszdhs}
		f(s(t),t)=0.
	\end{equation}
According to \eqref{eq:kxxlj} and \eqref{eq:fbarv}, expanding $f(s,t)$ gives
	\[
	f(s,t)=	{t}^2\bar v^T(A_2-A_1)\bar v+{s}^2 w^T(A_2-A_1)w+2sw^{T}(A_2-A_1)x^*+2st\bar v^T(A_2-A_1)w.
	\]
Then it follows from \eqref{eq:flqv} and \eqref{eq:yhszdhs} that
	\begin{equation}
		s(t)\neq0,~\forall t\neq0. \label{st0}
	\end{equation}
Define
	\begin{eqnarray}
		x(t):=\frac{x^*+t\bar v+s(t)w}{\sqrt{[x^*+t\bar v+s(t)w]^TA_1[x^*+t\bar v+s(t)w]}}.\nonumber
	\end{eqnarray}
Since $\lim_{t\to 0}s(t)=0$ and
$\lim_{t\to 0}(x^*+t\bar v+s(t)w)^TA_1(x^*+t\bar v+s(t)w)=1$, $x(t)$ is well defined for any sufficiently small $|t|$. By the definition of $x(t)$ and \eqref{eq:yhszdhs}, elementary computations show that \eqref{eq:ysqlxd}-\eqref{eq:t_fea} hold.
For any sufficiently small $|t|$, we have
	\begin{equation}\label{eq:kxqxmbz2}
		q_0(x(t))=L(x(t))=\frac{{s(t)}^2w^TGw}{2[x^*+t\bar v+s(t)w]^TA_1[x^*+t\bar v+s(t)w]}-(\alpha+\beta),
	\end{equation}
	where the second equality follows from \eqref{eq:global_grad} and \eqref{eq:beta0}.
	For any sufficiently small $|t|$ and $t\neq 0$,
	it follows from \eqref{st0}-\eqref{eq:kxqxmbz2} and the definition of $w$ that
	\[
	q_0(x(t))<-(\alpha+\beta)=L(x^*)=q(x^*),
	\]	
	which contradicts that $x^*$ is a local minimizer of \eqref{eq:HQCQP_2}.
	
Case (c).  We assume \eqref{eq:flqv} and
	\begin{equation}\label{eq:betaNon0}
		G\bar v\neq 0.
	\end{equation}
	Since the assumption \eqref{eq:PolyakA2} implies that $A_1$ and $A_2$ are simultaneously diagonalizable by congruence, up to a linear transformation,  we assume that $A_1$ and $A_2$ are both diagonal,
	\begin{equation}\label{eq:ygjxdcy3}
		A_1=\diag(\gamma_1,\cdots,\gamma_n), ~ A_2=\diag(\lambda_1,\cdots,\lambda_n).
	\end{equation}
	Since $A_1x^*$ and $A_2x^*$ are linearly independent, then $(A_1-A_2)x^*\neq0$. Then, there is an index $i_0\in\{1,\cdots,n\}$ such that $\gamma_{i_0}\neq\lambda_{i_0}$ and $x^*_{i_0}\neq 0$. Without loss of generality,  we can assume that $i_0=1$ and
	\begin{eqnarray}\label{eq:wlog}
		&&x^*_1>0,~ \gamma_1\neq0,~ \lambda_1=0,
	\end{eqnarray}
since $-x^*$ remains a local non-global minimizer, and in case of $\lambda_1\neq 0$ we can replace $A_2$ with
	\[ \tfrac{\gamma_1}{\gamma_1-\lambda_1}A_2-\tfrac{\lambda_1}{\gamma_1-\lambda_1}A_1.
	\]
	Let
	$\bar I={\rm diag}(0,1,\cdots,1)$ and $e_1=(1,0,\cdots,0)^T$. Define
	\[
	y(t)= \frac{\bar I(x^*+t\bar v)}{\sqrt{(x^*+t\bar v)^TA_2(x^*+t\bar v)}},~
	x(t)=y(t)+\sqrt{\frac{1-y(t)^TA_1y(t)}{\gamma_1}}\ e_1.
	\]
	Then $y(t)$ is well defined for sufficiently small $|t|$ as $x^{*T}A_2x^*=1$. Notice that
\begin{eqnarray}
&&y(0)=(0,x_2^*,\cdots,x_n^*)^T,\nonumber\\
&&\frac{1-y(0)^TA_1y(0)}{\gamma_1}=x_1^{*2}> 0,\label{y0A1}\\
&&x(0)=y(0)+\sqrt{\frac{1-y(0)^TA_1y(0)}{\gamma_1}}\ e_1=\bar Ix^*+x_1^*e_1=x^*.\nonumber
\end{eqnarray}
It follows from \eqref{y0A1} that $x(t)$ is well defined for any sufficiently small $|t|$.  Moreover, $x(t)$ is a feasible solution of \eqref{eq:HQCQP_2} by verifying that
	\begin{eqnarray}
		&&x(t)^TA_2x(t)=y(t)^TA_2y(t)=1,\nonumber\\
		&&x(t)^TA_1x(t)=y(t)^TA_1y(t)+\gamma_1 {\left(\sqrt{\frac{1-y(t)^TA_1y(t)}{\gamma_1}}\right)}^2=1,
		\nonumber
	\end{eqnarray}
using the fact $\lambda_1=0$.
	For any sufficiently small $|t|$, we define
	\[\phi(t)\coloneqq q_0(x(t))=L(x(t))=x(t)^TGx(t)-(\alpha+\beta).\]
	Elementary analysis shows that
	\begin{eqnarray}
		\phi'(0)&=&2x(0)^TGx'(0),\nonumber\\
		\phi''(0)&=&2x(0)^TGx''(0)+2x'(0)^TGx'(0),\nonumber\\
		\phi'''(0)&=&2x(0)^TGx'''(0)+6x'(0)^TGx''(0),\nonumber
	\end{eqnarray}
which can be further simplified to
	\begin{eqnarray}
		\phi'(0)&=&2\bar v^TGx^*=0,\nonumber\\
		\phi''(0)&=&2\bar v^TG\bar v=0,\nonumber\\
		\phi'''(0)&=&\frac{6\bar v^T(A_2-A_1)\bar v}{\gamma_1x_1^*}\ e_1^TG\bar v,\label{eq:F'''}
	\end{eqnarray}
	by using the equalities \eqref{eq:global_grad}, \eqref{eq:fbarv}-\eqref{eq:sbarv}, and
	\begin{eqnarray}
		y'(0)&=&\bar I\bar v,~ y''(0)=-(\bar v^TA_2\bar v) \bar Ix^*,\nonumber\\
		x'(0)&=& \bar v,~
		x''(0)=-(\bar v^TA_2\bar v) x^*+\frac{\bar v^T(A_2-A_1)\bar v}{\gamma_1x_1^*}\ e_1.\nonumber
	\end{eqnarray}
	
Let $W\in \R^{n\times(n-2)}$ be the matrix whose columns form a basis of the subspace
	\[
	\{v:~ v^TA_1x^*=0,~ v^TA_2x^*=0\}.
	\]
	That is, $W^TA_1x^*=W^TA_2x^*=0$, and $W$ is of full column-rank.
	For $\bar v\neq0$ satisfying \eqref{eq:fbarv}-\eqref{eq:sbarv}, there exists a nonzero $\bar u\in\R^{n-2}$ such that
	$\bar v=W\bar u$ and
	\begin{equation}\label{eq:ygjxdcy}
		\bar u^TW^TGW\bar u=0.
	\end{equation}
	By Lemma \ref{le:cl_ne}, we have
	\begin{equation}\label{eq:ygjxdcy1}
		W^TGW= W^T(A_0+\alpha A_1+\beta A_2)W\succeq 0.
	\end{equation}
	It implies from \eqref{eq:ygjxdcy}-\eqref{eq:ygjxdcy1} that
	\begin{equation*}
		0=W^TGW\bar u=W^TG\bar v.
	\end{equation*}
	Therefore, by the definition of $W$, there exist $\theta_1, \theta_2\in\R$ such that
	\begin{equation}\label{eq:ygjxdcy2}
		G\bar v=\theta_1A_1x^*+\theta_2A_2x^*.
	\end{equation}
	Left-multiplying both sides of the equality by $x^{*T}$ yields that
	\[
	x^{*T}G\bar v=\theta_1x^{*T}A_1x^*+\theta_2x^{*T}A_2x^*=\theta_1+\theta_2,
	\]
	where the second equality follows from \eqref{eq:kxxlj}.
	By \eqref{eq:global_grad}, we have $x^{*T}G=0$ and hence $\theta_1+\theta_2=0$.
	Replacing $\theta_1$ with $-\theta_2$ in \eqref{eq:ygjxdcy2} yields that
	\begin{equation}
		G\bar v=\theta_2(A_2-A_1)x^*.\label{Gv}
	\end{equation}
	It follows from  \eqref{eq:betaNon0} and \eqref{Gv} that $\theta_2\neq 0$. Substituting \eqref{Gv} into \eqref{eq:F'''} gives
	\[
	\phi'''(0)=\frac{6\bar v^T(A_2-A_1)\bar v}{\gamma_1x_1^*}\ e_1^TG\bar v=-6\theta_2\bar v^T(A_2-A_1)\bar v,
	\]
	where the second equality follows from \eqref{eq:ygjxdcy3}-\eqref{eq:wlog}. Then it implies from \eqref{eq:flqv} and the fact $\theta_2\neq 0$ that $\phi'''(0)\neq 0$, which contradicts  the fact that $x^*$ is a local minimizer of \eqref{eq:HQCQP_2}.
\end{proof}

The above proof of Theorem \ref{th:HCDT_l_1} leads to a characterization of the strictness of the local non-global minimizer of \eqref{eq:HQCQP_2}.
\begin{corollary}\label{re:nonst}
	Suppose that Polyak's assumptions \eqref{eq:PolyakA2}-\eqref{eq:PolyakA1} hold. Let $x^*$ be a local non-global minimizer of \eqref{eq:HQCQP_2}. Then $x^*$ is non-strict if and only if there exists a nonzero vector $\bar v$ satisfying
	\eqref{eq:fbarv}-\eqref{eq:lqv}.
\end{corollary}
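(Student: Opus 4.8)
The plan is to extract the corollary directly from the trichotomy already established in the proof of Theorem \ref{th:HCDT_l_1}, without running any new construction. The starting point is to note that, since $x^*$ is assumed to be a local non-global minimizer, Theorems \ref{th:HCDT_l1} and \ref{th:HCDT_l} supply the KKT data automatically: there are $\alpha\in\R$ and $\beta>0$ with \eqref{eq:global_grad}-\eqref{eq:ghx}, and $G:=A_0+\alpha A_1+\beta A_2$ has exactly one negative eigenvalue, so $G\nsucceq0$. By Theorem \ref{th:HCDT_l_1}, the sole remaining clause separating strict from non-strict minimizers is the second-order sufficient condition \eqref{eq:cl_su}. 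I would therefore first reduce the claim to the equivalence: $x^*$ is non-strict if and only if \eqref{eq:cl_su} fails.

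Next I would translate the failure of \eqref{eq:cl_su} into the existence of the desired direction. Since $\beta>0$, Lemma \ref{le:cl_ne} already gives $v^TGv\ge0$ for every $v$ in the subspace $\{v:v^TA_1x^*=0,\ v^TA_2x^*=0\}$. Hence \eqref{eq:cl_su} can only fail through a nonzero $\bar v$ lying in this subspace with $\bar v^TG\bar v=0$; that is, \eqref{eq:cl_su} fails precisely when some nonzero $\bar v$ satisfies \eqref{eq:fbarv}-\eqref{eq:sbarv}. It thus remains to show that, for a local non-global minimizer, the existence of $\bar v$ obeying \eqref{eq:fbarv}-\eqref{eq:sbarv} is equivalent to the existence of $\bar v$ obeying \eqref{eq:fbarv}-\eqref{eq:lqv}.

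The reverse implication is immediate, as \eqref{eq:fbarv}-\eqref{eq:lqv} contains \eqref{eq:fbarv}-\eqref{eq:sbarv}. For the forward implication, which is the heart of the matter, I would argue that any $\bar v$ satisfying \eqref{eq:fbarv}-\eqref{eq:sbarv} must automatically satisfy \eqref{eq:lqv}. Given such a $\bar v$, the proof of Theorem \ref{th:HCDT_l_1} partitions the possibilities into Case (a), where \eqref{eq:lqv} holds, and the complementary situation \eqref{eq:flqv}, which splits into Case (b) ($G\bar v=0$) and Case (c) ($G\bar v\neq0$). The key point to emphasize is that the contradictions derived in Cases (b) and (c)---a feasible curve along which $q_0$ strictly decreases, and a feasible curve with $\phi'(0)=\phi''(0)=0$ but $\phi'''(0)\neq0$---are contradictions with $x^*$ being a \emph{local} minimizer, and hence hold here regardless of strictness. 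Since $x^*$ is a genuine local minimizer, Cases (b) and (c) are excluded, forcing \eqref{eq:flqv} to fail; thus \eqref{eq:lqv} holds for this $\bar v$.

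Finally, to close the corollary I would invoke the Case (a) construction in the other direction: given nonzero $\bar v$ with \eqref{eq:fbarv}-\eqref{eq:lqv}, the curve $x(t)=(x^*+t\bar v)/\sqrt{1+t^2\bar v^TA_1\bar v}$ remains feasible, differs from $x^*$ for $t\neq0$, and yields $q_0(x(t))=q_0(x^*)$ for all small $|t|$, exhibiting $x^*$ as non-strict. The main obstacle is conceptual rather than computational: one must recognize that the case analysis of Theorem \ref{th:HCDT_l_1} already separates the strict-minimality obstruction (Case (a)) from the local-minimality obstructions (Cases (b), (c)), so that no new estimates are needed---only the observation that the latter obstructions cannot coexist with local minimality.
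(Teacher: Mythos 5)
Your proposal is correct and follows essentially the same route as the paper's own proof: both reduce the question to whether \eqref{eq:cl_su} fails, observe that failure forces a nonzero $\bar v$ with \eqref{eq:fbarv}--\eqref{eq:sbarv}, and then note that Cases (b) and (c) of the proof of Theorem \ref{th:HCDT_l_1} contradict mere local minimality (not just strictness), so only Case (a) survives and its curve construction exhibits non-strictness. Your write-up is somewhat more explicit about these logical steps than the paper's terse argument, but the underlying idea is identical.
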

	\begin{proof}
	Let $x^*$ be a local non-global minimizer of \eqref{eq:HQCQP_2}.
There exist $\alpha\in\R$ and $\beta>0$ such that \eqref{eq:global_grad}-\eqref{eq:ghx} and \eqref{eq:cl_ne}.
If \eqref{eq:cl_su} holds, then $x^*$ is a strict local solution. We assume
\eqref{eq:cl_su} does not hold.
According to the proof of Theorem \ref{th:HCDT_l_1}, Cases (b) and (c) cannot happen as $x^*$ is a local non-global minimizer. We only have Case (a) where \eqref{eq:fbarv}-\eqref{eq:lqv} hold and
$x^*$ is a non-strict local non-global minimizer.
%
%
%
%
\end{proof}

\subsection{Applications}\label{sec43}
In this subsection, we apply the main result established in Section \ref{sec:cytjygjx} to the special cases \eqref{eq:trs} and \eqref{eq:TLS}.

First we can recover the recently established necessary and sufficient optimality condition for the local non-global minimizer of \eqref{eq:trs} due to Wang and Xia \cite{Wang20}.
\begin{theorem}[\cite{Wang20}]
$y^*$ is a  local non-global minimizer of \eqref{eq:trs} if and only if there exists a unique Lagrangian multiplier $\mu>0$ such that
\begin{eqnarray}
	&&(Q+\mu I)y^*+b=0,\label{eq:trs1}\\
	&&y^{*T}y^*=1,\label{eq:trs2}\\
	&&v^T(Q+\mu I)v>0,\ \forall v\neq 0 {\rm \ such\ that\ } v^Ty^*=0,\label{eq:trs3}\\
	&&Q+\mu I\nsucceq0.\label{eq:trs4}
\end{eqnarray}
\end{theorem}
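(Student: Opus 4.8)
The plan is to derive the characterization of local non-global minimizers of \eqref{eq:trs} as a direct corollary of Theorem \ref{th:HCDT_l_1}, by fitting the homogenized trust region subproblem \eqref{eq:trs_hcdt} into the framework of \eqref{eq:HQCQP_2}. Recall from the introduction that \eqref{eq:trs} homogenizes to
\[
\min_{(x,z)\in\R^{n+1}}\{x^TQx+2b^Txz:~x^Tx\le 1,~z^2=1\},
\]
so I would set the augmented variable to be $\tilde x=(x^T,z)^T\in\R^{n+1}$ and read off the three symmetric matrices
\[
A_0=\begin{pmatrix} Q & b\\ b^T & 0\end{pmatrix},\quad
A_1=\begin{pmatrix} 0 & 0\\ 0 & 1\end{pmatrix},\quad
A_2=\begin{pmatrix} I & 0\\ 0 & 0\end{pmatrix},
\]
so that $q_1(\tilde x)=z^2$ and $q_2(\tilde x)=x^Tx$, and \eqref{eq:trs_hcdt} becomes exactly \eqref{eq:HQCQP_2} with $n$ replaced by $n+1$. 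First I would verify that Polyak's assumptions \eqref{eq:PolyakA2}--\eqref{eq:PolyakA1} hold for this instance: the combination $A_1+A_2=I$ is positive definite, giving \eqref{eq:PolyakA2}, and the point $\tilde x^0=(0,\dots,0,1)^T$ satisfies $q_1=1$, $q_2=0<1$, giving \eqref{eq:PolyakA1}. I would also note $n+1\ge 3$ since the problem is nontrivial.

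Next I would establish the correspondence between local non-global minimizers of the two problems. Because $(x,z)$ and $(-x,-z)$ yield the same objective and both constraints, a local non-global minimizer $y^*$ of \eqref{eq:trs} lifts to $\tilde x^*=(y^{*T},1)^T$ (and its negative), which is a local non-global minimizer of the homogenized problem; conversely any local non-global minimizer $\tilde x^*=(x^{*T},z^*)^T$ has $z^{*2}=1$, so after a sign flip $z^*=1$ and $y^*=x^*$ recovers a local non-global minimizer of \eqref{eq:trs}. The key point, which I would argue carefully, is that global minimizers correspond to global minimizers and strict local minimizers correspond to strict local minimizers under this two-to-one lifting, so ``local non-global'' is preserved in both directions. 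With this dictionary in hand, I would apply Theorem \ref{th:HCDT_l_1} to $\tilde x^*$ and translate each condition back. The stationarity $(A_0+\alpha A_1+\beta A_2)\tilde x^*=0$ unpacks, using $z^*=1$, into the two blocks $(Q+\beta I)y^*+b=0$ and $b^Ty^*+\alpha=0$; the first block is exactly \eqref{eq:trs1} with $\mu:=\beta>0$, while the second merely determines $\alpha$. The feasibility $q_2(\tilde x^*)=1$ gives \eqref{eq:trs2}.

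For the second-order part I would compute the critical cone. The tangent directions $v=(w^T,s)^T$ satisfying $v^TA_1\tilde x^*=0$ and $v^TA_2\tilde x^*=0$ are precisely those with $s=0$ and $w^Ty^*=0$, so on this cone $v^T(A_0+\alpha A_1+\beta A_2)v$ reduces to $w^T(Q+\beta I)w$; hence \eqref{eq:cl_su} becomes exactly \eqref{eq:trs3}, and the indefiniteness $A_0+\alpha A_1+\beta A_2\nsucceq 0$ transfers to \eqref{eq:trs4} after checking the reduced block. I would finally establish \emph{uniqueness} of $\mu$: \eqref{eq:trs3} together with \eqref{eq:trs4} forces $Q+\mu I$ to have exactly one nonpositive eigenvalue that is negative, so $-\mu$ must lie strictly between $\lambda_{\min}(Q)$ and the second smallest eigenvalue, which pins down $\mu$ uniquely given \eqref{eq:trs1}. \textbf{The main obstacle} I anticipate is the bookkeeping in the local-minimizer correspondence: one must show that strictness and the non-global property survive the homogenization in both directions, since a neighborhood in the lifted $(n+1)$-space must be related back to a neighborhood of $y^*$ after eliminating $z$ via $z^{*2}=1$. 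This is where the continuity argument used in the total least squares corollary (via the reformulation \eqref{eq:CG-1}) would serve as a template, and care is needed because the lift is two-to-one rather than a bijection.
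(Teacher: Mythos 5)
Your overall route is the same as the paper's: homogenize \eqref{eq:trs} to \eqref{eq:trs_hcdt} with the matrices \eqref{eq:trsJZ}, check Polyak's assumptions, apply Theorem \ref{th:HCDT_l_1} to $\tilde x^*=(y^{*T},1)^T$, and translate the four conditions block by block (the translation of $A_0+\alpha A_1+\beta A_2\nsucceq 0$ into \eqref{eq:trs4} via the congruence that eliminates the last row and column is exactly what the paper does). However, there is a genuine gap: Theorem \ref{th:HCDT_l_1} characterizes \emph{strict} local non-global minimizers, whereas the statement you are proving concerns \emph{all} local non-global minimizers of \eqref{eq:trs}. Your dictionary only asserts that local non-global minimizers lift to local non-global minimizers and that strictness is preserved by the lift; it never establishes that the lifted point is strict in the first place. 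Without that, the necessity direction cannot invoke Theorem \ref{th:HCDT_l_1} at all. You flag "strictness surviving the homogenization" as the main obstacle, but that is not the missing piece — the missing piece is a proof that a \emph{non-strict} local non-global minimizer cannot occur for this instance.

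The paper closes this gap with Corollary \ref{re:nonst}: a local non-global minimizer of \eqref{eq:HQCQP_2} is non-strict if and only if there is a nonzero $\bar v$ with $\bar v^TA_1x^*=\bar v^TA_2x^*=0$, $\bar v^TG\bar v=0$ and $\bar v^T(A_2-A_1)\bar v=0$. For the data \eqref{eq:trsJZ}, $\bar v^TA_1x^*=0$ forces $\bar v_{n+1}=0$, and then $\bar v^T(A_2-A_1)\bar v=\|\bar v\|^2-2\bar v_{n+1}^2=\|\bar v\|^2=0$ forces $\bar v=0$, a contradiction; hence every local non-global minimizer of \eqref{eq:trs_hcdt} is automatically strict and Theorem \ref{th:HCDT_l_1} applies. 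You need this (or an equivalent argument) to make the necessity direction go through. As a minor point, your uniqueness argument for $\mu$ via eigenvalue locations is more elaborate than needed: \eqref{eq:trs1} with $y^*\neq 0$ already pins down $\mu$, since two multipliers $\mu_1,\mu_2$ would give $(\mu_1-\mu_2)y^*=0$.
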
	
\begin{proof}
First, $y^*$ is a strict local non-global minimizer of \eqref{eq:trs} if and only if $x^*=(y^{*T},1)^T$ is that of problem \eqref{eq:trs_hcdt}, which is a special case of $(n+1)$-dimensional \eqref{eq:HQCQP_2} with
	\begin{eqnarray} \label{eq:trsJZ}
		A_0=\bmatrix Q& b\\ b^T & 0\endbmatrix,  A_1=\bmatrix 0& 0\\ 0 & 1\endbmatrix,
		A_2=\bmatrix I& 0\\ 0 & 0\endbmatrix.
	\end{eqnarray}
Then Polyak's assumptions \eqref{eq:PolyakA2}-\eqref{eq:PolyakA1} hold. According to Theorem \ref{th:HCDT_l_1}, $x^*$ is a strict local non-global minimizer of \eqref{eq:trs_hcdt} if and only if there exist $\alpha\in\R, \beta>0$ such that
	 \begin{eqnarray}
	 	&&(Q+\beta I) y^*+b=0,\  b^T y^*+\alpha=0, \label{eq:global_grad1}\\
	 	&&y^{*T}y^*\le1,\  \beta(y^{*T}y^*-1)=0,\label{eq:feasible1}\\
		&&v^T(Q+\beta I)v> 0, \ \forall  v\neq 0 {\rm\  such\ that\ } v^Ty^*=0, \label{eq:nonglobal_hessian1}\\
		&&\bmatrix Q+\beta I& b\\ b^T & \alpha\endbmatrix\nsucceq 0.\label{eq:nsucceq}
	 \end{eqnarray}
Since $\beta>0$, \eqref{eq:feasible1} is equivalent to $y^{*T}y^*=1$. By \eqref{eq:global_grad1}, we have
\[
\bmatrix  I& 0\\ y^{*T} & 1\endbmatrix
	\bmatrix Q+\beta I& b\\ b^T & \alpha\endbmatrix
\bmatrix  I& y^{*}\\ 0 & 1\endbmatrix
=
	\bmatrix Q+\beta I& 0\\ 0 & 0 \endbmatrix,
\]
which implies that \eqref{eq:nsucceq} holds if and only if \eqref{eq:trs4} is true.
Therefore, the necessary and sufficient condition  \eqref{eq:global_grad1}-\eqref{eq:nsucceq} are equivalent to \eqref{eq:trs1}-\eqref{eq:trs4} with $\mu=\beta$.

The remaining  proof is to show the strictness of the  local non-global minimizer $y^*$ of \eqref{eq:trs}. If this is not true, according to Corollary \ref{re:nonst},  there exists a nonzero vector $v\in\R^{n+1}$ such that
\begin{eqnarray}
	&&0={\bar v}^TA_1x^*={\bar v}_{n+1}=0,\nonumber\\
	&&0={\bar v}^T(A_2-A_1){\bar v}= {\bar v}^T{\bar v}-2{\bar v}_{n+1}^2=0,\nonumber
\end{eqnarray}
which is certainly a contradiction.
\end{proof}

The second application of Theorem \ref{th:HCDT_l_1} is to fully characterize the strict local non-global minimizer of \eqref{eq:TLS}, 
thanks to the generalized Charnes-Cooper reformulation (see \eqref{eq:CG}):
\begin{equation}\label{eq:CG-Q}
\min_{y\in\R^n, z\in\R}
\{\|Ay-bz\|^2:~y^Ty+z^2=1,~y^TL^TLy-\rho z^2\le0\}.
\end{equation}
Problem \eqref{eq:CG-Q} is a special case of $\eqref{eq:HQCQP_2}$ as the second constraint $y^TL^TLy-\rho z^2\le0$ can be equivalently replaced with $(y^Ty+z^2)+(y^TL^TLy-\rho z^2)\le1$.
For \eqref{eq:TLS}, we make the following commonly used assumption due to Beck et al. \cite{Beck06}:
\begin{equation}
r=n~{\rm or}~\lambda_{\min}\left( \left[\begin{array}{cc}F^TA^TAF& F^TA^Tb\\b^TAF& \|b\|^2\end{array}\right]\right)<\lambda_{\min}\left(F^TA^TAF\right),\label{as_etrs}
\end{equation}
where $F\in \R^{n\times (n-r)}$ is a matrix whose columns form an orthogonal basis of the null space of matrix $L$. The assumption (\ref{as_etrs}) is sufficient to guarantee the existence of the global minimizer of \eqref{eq:TLS}, see also \cite{Beck09}. As a new observation in this paper, we show that the assumption (\ref{as_etrs}) can help to better characterize the local minimizers of \eqref{eq:CG-Q}.
\begin{lemma}\label{le:z0}
Let $(y^{*},z^*)$ be any local minimizer of \eqref{eq:CG-Q}.
Under the assumption \eqref{as_etrs}, it holds that $z^*\neq 0$.
\end{lemma}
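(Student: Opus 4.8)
We must show that under assumption \eqref{as_etrs}, any local minimizer $(y^*, z^*)$ of \eqref{eq:CG-Q} has $z^* \neq 0$. The plan is to argue by contradiction: suppose $z^* = 0$, and derive a contradiction with local optimality by exploiting the structure of \eqref{as_etrs}.

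**The setup and the role of the constraints.** First I would record what $z^* = 0$ forces. On the feasible set $y^Ty + z^2 = 1$, setting $z^* = 0$ gives $\|y^*\| = 1$, and the objective becomes $\|Ay^*\|^2$ while the second constraint $y^{*T}L^TL y^* - \rho (z^*)^2 \le 0$ reads $y^{*T}L^TL y^* \le 0$, which (since $L^TL \succeq 0$) forces $Ly^* = 0$, i.e. $y^*$ lies in the null space of $L$. Hence $y^* = F\xi$ for some unit vector $\xi \in \R^{n-r}$ (using that the columns of $F$ are an orthonormal basis of $\ker L$), and the objective at this point is $\xi^T F^TA^TAF\, \xi$. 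The natural line of attack is to perturb $z$ slightly away from $0$ while staying feasible, and to read off from \eqref{as_etrs} that this strictly decreases the objective, contradicting local minimality.

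**Constructing the descent perturbation.** The key computation I would carry out is to construct a feasible curve through $(y^*, 0)$ along which the objective strictly decreases. The second alternative in \eqref{as_etrs} asserts precisely that the augmented matrix $\left[\begin{smallmatrix} F^TA^TAF & F^TA^Tb \\ b^TAF & \|b\|^2 \end{smallmatrix}\right]$ has a smaller minimal eigenvalue than $F^TA^TAF$ alone; intuitively this means one can reduce $\|A(F\eta) - bz\|^2$ below the purely-$\eta$ value by turning on a nonzero $z$. Concretely I would look for a perturbation of the form $(y,z) = (F\eta, z)$ with $\eta$ near $\xi$ and small $z$, which automatically keeps $Ly = 0$ so the inequality constraint stays slack (it reads $0 - \rho z^2 \le 0$, satisfied for all $z$), and then rescale to restore $\|y\|^2 + z^2 = 1$. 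The eigenvalue inequality in \eqref{as_etrs} guarantees the existence of a direction $(\eta, z)$ in the $F$-block with $z \neq 0$ whose Rayleigh quotient against the augmented matrix is strictly below $\lambda_{\min}(F^TA^TAF)$, and in the case $r = n$ the space $\ker L$ is trivial so $y^* = 0$ is impossible given $\|y^*\|=1$, immediately disposing of that case. I would translate that eigenvector into a feasible curve and verify the objective drops.

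**The main obstacle.** The delicate part is the global minimizer of the reduced problem versus local behaviour: even if $y^* = F\xi$ is \emph{not} the global optimum of the $F$-restricted quadratic, I must produce a descent direction that is admissible for the \emph{full} problem and that genuinely beats $\|Ay^*\|^2$ locally. The cleanest route is probably to note that when $z^* = 0$ the point is forced to be a minimizer of $\min\{\,\|AF\eta\|^2 : \|\eta\| = 1\,\}$ (otherwise one already has a better feasible point with $z = 0$ nearby), so $\xi^T F^TA^TAF \xi = \lambda_{\min}(F^TA^TAF)$; then the strict inequality in \eqref{as_etrs} says that admitting $z \neq 0$ yields an augmented Rayleigh quotient strictly below this value, so the global minimum of \eqref{eq:CG-Q} restricted to $\ker L \times \R$ is strictly less than $\|Ay^*\|^2$, and the corresponding minimizing direction provides an arbitrarily-close feasible point with strictly smaller objective. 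Making this last eigenvalue-to-curve translation rigorous — handling the normalization and confirming the perturbed point stays in the feasible region and in every neighbourhood of $(y^*,0)$ — is where the real work lies, but it reduces to elementary linear algebra once the eigenvector is in hand.
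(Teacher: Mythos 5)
Your setup is right and matches the paper's: $z^*=0$ forces $Ly^*=0$ via $L^TL\succeq 0$, the case $r=n$ dies immediately because $L$ is then invertible while $\|y^*\|=1$, and in the remaining case everything reduces to the problem restricted to $\ker L\times\R$, i.e.\ to $\min\{\|AF\eta-bz\|^2:\ \eta^T\eta+z^2=1\}$, whose optimal value is $\lambda_{\min}$ of the augmented matrix in \eqref{as_etrs}. But there is a genuine gap at the two places where you pass from ``the global minimum of the restricted problem is smaller'' to ``there is a strictly better feasible point arbitrarily close to $(y^*,0)$.'' You justify the claim that $\xi=F^Ty^*$ must attain $\lambda_{\min}(F^TA^TAF)$ by saying ``otherwise one already has a better feasible point with $z=0$ nearby,'' and you later assert that the minimizing eigenvector of the augmented matrix ``provides an arbitrarily-close feasible point.'' Neither is true for a general nonconvex problem: a point can be a strict local minimizer of a quadratic form on a sphere-like set without being global, and the eigenvector attaining $\lambda_{\min}$ of the augmented matrix need not be anywhere near $(\xi,0)$. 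What makes these steps valid here is precisely that the restricted problem is an instance of (QQ1) (a homogeneous quadratic form minimized over a quadratic-form equality constraint), for which Theorem \ref{th:HGTRS-l} says every local minimizer is global. Your argument never invokes this, and without it the descent curve you promise to construct cannot be produced by ``elementary linear algebra once the eigenvector is in hand''; building it from scratch amounts to reproving Theorem \ref{th:HGTRS-l} for the sphere.

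Once that theorem is brought in, the proof closes exactly as in the paper and your eigenvector gymnastics become unnecessary: $(F^Ty^*,0)$ is a local minimizer of the restricted problem (any nearby feasible point of the restricted problem is a nearby feasible point of \eqref{eq:CG-Q}, since $Ly=0$ keeps the inequality constraint satisfied), hence by Theorem \ref{th:HGTRS-l} it is a global minimizer, so its value $\|AF\xi\|^2\ge\lambda_{\min}(F^TA^TAF)$ equals the optimal value $\lambda_{\min}$ of the augmented matrix — contradicting the strict inequality in \eqref{as_etrs}. I recommend you restructure the second half of your argument around this reduction rather than around an explicit descent perturbation.
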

\begin{proof}
Suppose, on the contrary, $z^*=0$. It follows from the second constraint of \eqref{eq:CG-Q} that $y^{*T}L^TLy^*=0$, which is equivalent to $Ly^*=0$.

If the first condition of the assumption \eqref{as_etrs} holds, i.e., $r=n$, $L$ is invertible as $L$ is of full row-rank. Then $Ly^*=0$ implies that $y^*=0$. This leads to a contradiction as the origin $(0,0)$ does not satisfy the first constraint of \eqref{eq:CG-Q}.

Suppose the second condition of the assumption \eqref{as_etrs} holds. It follows from $Ly^*=0$ that $y^*\in\{Ft:~t\in\R^{n-r}\}$. Since $(y^{*}, 0)$ is a local minimizer of \eqref{eq:CG-Q}, $(t^{*},0)$ with $t^{*}=F^Ty^{*}$ (noting $F^TF=I$) is a local minimizer of
the following restricted problem of \eqref{eq:CG-Q}:
\begin{equation}\label{eq:CG-Q1}
	\min_{t\in\R^{n-r}, z\in\R}
	\{\|AFt-bz\|^2:~t^Tt+z^2=1\}=\lambda_{\min}\left( \left[\begin{array}{cc}F^TA^TAF& F^TA^Tb\\b^TAF& \|b\|^2\end{array}\right]\right).
\end{equation}
Notice that \eqref{eq:CG-Q1} is a special case of (QQ1). According to Theorem \ref{th:HGTRS-l}, $(t^{*},0)$ is a global minimizer of \eqref{eq:CG-Q1}. That is,  $t^*$ globally solves the reduced problem of \eqref{eq:CG-Q1}:
\[	
\min_{t\in\R^{n-r} }
\{\|AFt\|^2:~t^Tt=1\}=\lambda_{\min}\left(F^TA^TAF\right),
\]
which contradicts the second condition of the assumption \eqref{as_etrs}.
%
\end{proof}

\begin{theorem}
There is a necessary and sufficient optimality condition for the strict local non-global minimizer of \eqref{eq:TLS}.
\end{theorem}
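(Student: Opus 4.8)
The plan is to reduce the statement to Theorem~\ref{th:HCDT_l_1} through the generalized Charnes--Cooper reformulation~\eqref{eq:CG-Q}, which is an $(n+1)$-dimensional instance of~\eqref{eq:HQCQP_2}. Writing $\|Ay-bz\|^2=(y^T,z)A_0(y^T,z)^T$ with $A_0=\left[\begin{smallmatrix} A^TA & -A^Tb \\ -b^TA & \|b\|^2\end{smallmatrix}\right]$, the equality constraint gives $A_1=I_{n+1}$, and rewriting the inequality as $(y^Ty+z^2)+(y^TL^TLy-\rho z^2)\le1$ gives $A_2=\left[\begin{smallmatrix} I+L^TL & 0 \\ 0 & 1-\rho\end{smallmatrix}\right]$. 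First I would verify Polyak's assumptions for this instance: since $A_1=I\succ0$, assumption~\eqref{eq:PolyakA2} holds with $(\mu_1,\mu_2)=(1,0)$; and the point $(y,z)=(0,1)$ satisfies $q_1=1$ and $q_2=1-\rho<1$ (using $\rho>0$), so~\eqref{eq:PolyakA1} holds. Hence Theorem~\ref{th:HCDT_l_1} applies verbatim to~\eqref{eq:CG-Q} and furnishes an explicit necessary and sufficient condition (existence of $\alpha\in\R$ and $\beta>0$ with \eqref{eq:global_grad}--\eqref{eq:ghx}, \eqref{eq:cl_su}, and $A_0+\alpha A_1+\beta A_2\nsucceq0$) for a point to be a strict local non-global minimizer of~\eqref{eq:CG-Q}.

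Next I would transfer this characterization back to~\eqref{eq:TLS}. The map~\eqref{eq:CG} is a bijection from $\R^n$ onto the open half $\{(y,z):y^Ty+z^2=1,\ z>0\}$ of the feasible sphere, with inverse $x=y/z$, and it preserves the objective value; moreover both $(y,z)$ and $(-y,-z)$ are feasible for~\eqref{eq:CG-Q} with the same objective. As in the proof that~\eqref{eq:TLS1} has no local non-global minimizer, this correspondence carries (strict) local minimizers of~\eqref{eq:TLS} to (strict) local minimizers of~\eqref{eq:CG-Q} and conversely, and it matches global minimizers with global minimizers, so ``non-global'' is preserved in both directions. The crucial point making the inverse well defined is Lemma~\ref{le:z0}: under~\eqref{as_etrs} every local minimizer $(y^*,z^*)$ of~\eqref{eq:CG-Q} has $z^*\neq0$, so after replacing $(y^*,z^*)$ by $-(y^*,z^*)$ if necessary we may take $z^*>0$ and set $x^*=y^*/z^*$.

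Combining the two steps yields the desired equivalence for~\eqref{eq:TLS}: $x^*$ is a strict local non-global minimizer of~\eqref{eq:TLS} if and only if $(y^*,z^*)=(y(x^*),z(x^*))$ satisfies the conditions of Theorem~\ref{th:HCDT_l_1} for~\eqref{eq:CG-Q}, which can then be written out directly in terms of the original data $A,b,L,\rho$ by substituting the matrices $A_0,A_1,A_2$ above. The main obstacle I anticipate is not the algebra but the bookkeeping of the correspondence: one must check carefully that local minimality and, in particular, strictness are preserved in both directions of the Charnes--Cooper map (a neighborhood of $x^*$ corresponds to a neighborhood on the sphere near $(y^*,z^*)$, with the antipodal copy $-(y^*,z^*)$ staying bounded away), and that a local non-global minimizer of~\eqref{eq:CG-Q} genuinely has $z^*\neq0$ so that it possesses a finite preimage. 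Once these facts are secured, the explicit KKT and second-order conditions follow immediately.
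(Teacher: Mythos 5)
Your proposal is correct and follows essentially the same route as the paper: reduce \eqref{eq:TLS} to the homogeneous problem \eqref{eq:CG-Q} via the generalized Charnes--Cooper map \eqref{eq:CG}, invoke Lemma~\ref{le:z0} to guarantee $z^*\neq 0$ so the correspondence of (strict, non-global) local minimizers is a genuine bijection, and then apply Theorem~\ref{th:HCDT_l_1}. The only difference is that you spell out the matrices $A_0,A_1,A_2$ and verify Polyak's assumptions explicitly, which the paper leaves implicit.
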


\begin{proof}
Let $(y^T,z)^T$ be a feasible solution of \eqref{eq:CG-Q} with $z\neq 0$.
Then the well-defined vector $x=y/z$ is  a feasible solution of \eqref{eq:TLS}. On the other hand, if $x$ is feasible for \eqref{eq:TLS}, then $(y^T,z)^T$ with $z\neq0$ obtained by the generalized Charnes-Cooper transformation \eqref{eq:CG} is feasible for \eqref{eq:CG-Q}.
Consequently, \eqref{eq:CG} is a one-to-one continuous mapping between the feasible set of \eqref{eq:TLS} and the set of feasible solutions of \eqref{eq:CG-Q}  satisfying $z>0$. Moreover, it preserves the objective function value unchanged.
Thus, $(y^*, z^*)$ is a strict local non-global minimizer of \eqref{eq:CG-Q} (noting that $z^*\neq0$ due to Lemma \ref{le:z0}), if and only if, ${y^*}/{z^*}$ is that of \eqref{eq:TLS}.
As \eqref{eq:CG-Q} is a special case of $\eqref{eq:HQCQP_2}$, according to Theorem \ref{th:HCDT_l_1} and the one-to-one mapping \eqref{eq:CG}, there exists a necessary and sufficient optimality condition for the strict local non-global minimizer of \eqref{eq:TLS}.
\end{proof}

\section{Discussion and examples}\label{sec:AppDis}
In this section, we show that further extensions of  Theorem \ref{th:HCDT_l_1} seem to be impossible.

The standard second-order sufficient condition certainly cannot be necessary for non-strict local minimizers. As shown in the following,  it may not even be necessary for a strict global minimizer of \eqref{eq:trs_hcdt}, which is a special case of \eqref{eq:HQCQP_2}.

Let $Q=U\Lambda U^T$ be an eigenvalue decomposition, where $\Lambda ={\rm diag}(\lambda_1,\cdots,\lambda_n)$,  $\lambda_1=\cdots=\lambda_m<\lambda_{m+1}\le\cdots\le\lambda_n$ and $U=[u_1,\cdots,u_n]\in \R^{n\times n}$ is orthogonal. We assume $\lambda_1<0$, otherwise \eqref{eq:trs_hcdt} would reduce to two convex trust region subproblems.

\begin{theorem} \label{th:trsFL}
Suppose $\lambda_1<0$. Problem \eqref{eq:trs_hcdt} has a strict global minimizer where the standard second-order sufficient optimality condition does not hold,
if and only if $b^Tu_i=0$ for $i=1,\cdots,m$, and
\begin{equation*}
\sum_{j=m+1}^n\left(\frac{u_j^Tb}{\lambda_j-\lambda_1}\right)^2=1.\label{trs:x}
\end{equation*}
\end{theorem}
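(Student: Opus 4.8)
The plan is to analyze the homogenized trust region problem \eqref{eq:trs_hcdt} directly using the eigenbasis of $Q$, and to characterize precisely when its strict global minimizer sits on the boundary between the "sufficient condition holds" regime and the "sufficient condition fails" regime. Recall from \eqref{eq:trsJZ} that \eqref{eq:trs_hcdt} is \eqref{eq:HQCQP_2} with $A_0=\bigl[\begin{smallmatrix}Q&b\\b^T&0\end{smallmatrix}\bigr]$, $A_1=\bigl[\begin{smallmatrix}0&0\\0&1\end{smallmatrix}\bigr]$, $A_2=\bigl[\begin{smallmatrix}I&0\\0&0\end{smallmatrix}\bigr]$, and that at any minimizer of the inhomogeneous \eqref{eq:trs} we have the KKT system $(Q+\mu I)y^*+b=0$, $y^{*T}y^*=1$ with multiplier $\mu\ge-\lambda_1$. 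The key observation is that the standard second-order sufficient condition \eqref{eq:cl_su}, after the congruence transformation shown in the proof recovering \cite{Wang20}, reduces to $Q+\mu I\succ 0$ on the orthogonal complement of $y^*$, whereas failure of the sufficient condition together with $A_0+\alpha A_1+\beta A_2\nsucceq0$ forces $\mu=-\lambda_1$ exactly, i.e. the multiplier equals the hard-case threshold and $Q+\mu I$ is singular with a nontrivial kernel. So my target is: the strict global minimizer lands exactly at $\mu=-\lambda_1$ with the kernel direction lying tangent to the sphere.

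First I would set $\mu=-\lambda_1$ and solve the gradient equation $(Q-\lambda_1 I)y^*=-b$ in the eigenbasis. Writing $y^*=\sum_i (u_i^Ty^*)u_i$, the $i$-th component gives $(\lambda_i-\lambda_1)(u_i^Ty^*)=-u_i^Tb$. For $i>m$ this determines $u_i^Ty^*=-\dfrac{u_i^Tb}{\lambda_i-\lambda_1}$. For $i\le m$, where $\lambda_i=\lambda_1$, solvability forces $u_i^Tb=0$, which is exactly the first stated condition $b^Tu_i=0$ for $i=1,\dots,m$; the components $u_i^Ty^*$ for $i\le m$ then remain free. The second step is to impose feasibility $y^{*T}y^*=1$. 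The free components contribute $\sum_{i\le m}(u_i^Ty^*)^2$, so the norm equation reads $\sum_{i\le m}(u_i^Ty^*)^2+\sum_{j>m}\bigl(\dfrac{u_j^Tb}{\lambda_j-\lambda_1}\bigr)^2=1$. Here is where strictness enters and does the real work: for the minimizer to be strict I must rule out a nonzero free component, because any nonzero $u_i^Ty^*$ with $i\le m$ would let me rotate within $\mathrm{span}\{u_1,\dots,u_m\}$ at constant objective, producing a continuum of global minimizers and hence non-strictness (this is precisely the Case (a) obstruction of Corollary \ref{re:nonst}). Thus strictness forces all free components to vanish, collapsing the norm equation to the displayed identity $\sum_{j=m+1}^n\bigl(\dfrac{u_j^Tb}{\lambda_j-\lambda_1}\bigr)^2=1$.

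For the converse direction I would assume $b^Tu_i=0$ for $i\le m$ and the summation identity, and verify that the point $y^*$ with $u_i^Ty^*=0$ for $i\le m$ and $u_j^Ty^*=-\dfrac{u_j^Tb}{\lambda_j-\lambda_1}$ for $j>m$ is a feasible KKT point with $\mu=-\lambda_1$ at which $Q+\mu I\succeq0$ and $A_0+\alpha A_1+\beta A_2\succeq0$ but not positive definite, so by Theorem \ref{le:HCDT_g2} it is a global minimizer while the strict second-order sufficient condition \eqref{eq:cl_su} fails along the singular direction. The strictness of this global minimizer I would confirm by checking that no feasible curve keeps the objective constant: since all the would-be flat directions $u_i$ ($i\le m$) have been pinned to zero and $Q-\lambda_1I$ is positive definite on $\mathrm{span}\{u_{m+1},\dots,u_n\}$, any admissible perturbation on the sphere strictly increases the objective. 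I expect the main obstacle to be the strictness analysis in both directions—carefully translating "strict local/global minimizer" of the homogenized problem into the vanishing of the free eigencomponents and excluding the Case (a) flat direction of Corollary \ref{re:nonst}—rather than the eigenbasis bookkeeping, which is routine. A secondary technical point I would watch is the passage between the lifted variable $(y,z)$ with $z^2=1$ and the original $y$, ensuring that the $\pm(y,z)$ symmetry does not spuriously create a second global minimizer that would destroy strictness.
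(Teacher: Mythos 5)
Your overall route is the same as the paper's: a global minimizer of \eqref{eq:trs_hcdt} carries a multiplier $\beta\ge-\lambda_1$; if $\beta>-\lambda_1$ then $Q+\beta I\succ0$ and the sufficient condition holds, so its failure pins $\beta=-\lambda_1$; solvability of $(Q-\lambda_1 I)y^*=-b$ gives $b^Tu_i=0$ for $i\le m$; and everything reduces to whether the free components of $y^*$ along $u_1,\dots,u_m$ must vanish. Your sufficiency half is sound (uniqueness of the solution of \eqref{eq:trsHC} when the sum equals $1$ gives strictness, and $u_1$ is a tangent null direction of $Q-\lambda_1 I$, so \eqref{eq:cl_su} fails).

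The gap is in the necessity half, in the claim that strictness alone forces the free components to vanish because a nonzero free component ``would let me rotate within $\mathrm{span}\{u_1,\dots,u_m\}$ at constant objective, producing a continuum of global minimizers.'' That is true only for $m\ge2$. When $m=1$ and $\|\bar y\|<1$, the solution set \eqref{xstar} consists of the two isolated points $\bar y\pm\sqrt{1-\|\bar y\|^2}\,u_1$; each is a strict global minimizer with multiplier $-\lambda_1$, yet $\sum_{j>m}\bigl(u_j^Tb/(\lambda_j-\lambda_1)\bigr)^2<1$. If your inference ``strict and $\mu=-\lambda_1$ $\Rightarrow$ sum $=1$'' were valid, these points would contradict the theorem. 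What actually excludes them is the other hypothesis: there the unique null direction $u_1$ of $Q-\lambda_1 I$ satisfies $u_1^Ty^*=a_1\neq0$, so it is not tangent to the sphere and the second-order sufficient condition \emph{holds} --- which is exactly how the paper disposes of the $m=1$ case. Your proof therefore needs a case split: for $m=1$ use the failure of \eqref{eq:cl_su} to force $u_1^Ty^*=0$ (hence $\|\bar y\|=1$); for $m\ge2$ your rotation/strictness argument works. As written you invoke the failure of the sufficient condition only to fix $\mu=-\lambda_1$, and that is not enough. A secondary point: attributing the $m\ge2$ non-strictness to ``Case (a) of Corollary \ref{re:nonst}'' is off --- for the homogenization \eqref{eq:trsJZ} any nonzero tangent $\bar v$ has $\bar v_{n+1}=0$ and hence $\bar v^T(A_2-A_1)\bar v=\bar v^T\bar v>0$, so Case (a) is vacuous here; the continuum for $m\ge2$ comes directly from the structure of \eqref{xstar}, and that corollary in any case concerns local non-global minimizers rather than global ones.
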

\begin{proof}
Problem \eqref{eq:trs_hcdt} is a special case of \eqref{eq:HQCQP_2} with the input
\eqref{eq:trsJZ}, where  Polyak's assumptions \eqref{eq:PolyakA2}-\eqref{eq:PolyakA1} hold.
By Theorem \ref{le:HCDT_g2}, $(y^{*},1)$ is a global minimizer of \eqref{eq:trs_hcdt} if and only if there exist $\alpha\in\R, \ \beta\ge0$ satisfying \eqref{eq:global_grad1}-\eqref{eq:feasible1}
and
\begin{eqnarray}
\bmatrix Q+\beta I& b^T\\ b & \alpha\endbmatrix\succeq 0. \nonumber
\end{eqnarray}
Since $\lambda_1<0$ and $Q+\beta I\succeq 0$, it holds that $\beta\ge -\lambda_1>0$. In addition, we have $y^{*T}y^*=1$ by \eqref{eq:feasible1}. Moreover, by \eqref{eq:trsJZ}, the standard second-order necessary optimality condition \eqref{eq:cl_ne} is equivalent to
\[
v^T(Q+\beta I)v\ge 0, ~\forall v\in\R^n {\rm\  such\ that\ } v^Ty^*=0,
\]
and the standard second-order sufficient condition \eqref{eq:cl_su} is equivalent to \eqref{eq:nonglobal_hessian1}.
If $\beta>-\lambda_1$, then $Q+\beta I\succ 0$ and \eqref{eq:nonglobal_hessian1} holds. If $\beta=-\lambda_1$, then \eqref{eq:global_grad1}-\eqref{eq:feasible1} reduce to
\begin{equation}\label{eq:trsHC}
(Q-\lambda_1 I) y^*+b=0,\ y^{*T}y^*=1.
\end{equation}
By the first equation in \eqref{eq:trsHC}, we have $b\in {\rm range}(Q-\lambda_1I)$, or equivalently,  $b^Tu_i=0$ for $i=1,\cdots,m$.  Define
\[
\bar y :=-\sum_{j=m+1}^n\frac{u_j^Tb}{\lambda_j-\lambda_1}u_j.
\]
If $\|\bar y\|=1$,
then $y^*=\bar y$ is the unique solution of \eqref{eq:trsHC} and $y^*\in {\rm span}\{u_{m+1},\cdots, u_n\}$. In this case, \eqref{eq:nonglobal_hessian1} does not hold with the setting $\beta=-\lambda_1$ and $v=u_1$.
If $\|\bar y\|<1$, then $\bar y$ does not satisfy the second equation in \eqref{eq:trsHC}. Therefore,  
\begin{equation}
y^*\in\left\{\bar y+a_1u_1+\cdots+a_mu_m:~
\|(a_1, \cdots, a_m)^T\|^2=1-\|\bar y\|^2\right\}.\label{xstar}
\end{equation}
If $m=1$, then $y^*=\bar y+a_1u_1$ with $a_1\neq0$ according to \eqref{xstar}. For any $v\neq 0$ satisfying $v^T(Q-\lambda_1 I)v=0$, we have
$
(Q-\lambda_1 I)v=0$,
as $Q-\lambda_1 I\succeq 0$. It follows that $v\in {\rm span}\{u_{1},\cdots, u_m\}={\rm span}\{u_{1}\}$. Then we have
\[
v^Ty^*=a_1 v^Tu_1\neq 0.
\]
Consequently, the standard second-order sufficient optimality condition \eqref{eq:nonglobal_hessian1} holds.
If $m>1$,  $y^*$ defined in \eqref{xstar} is a non-strict local minimizer.  The proof is complete.
\end{proof}	


We show that Theorem \ref{th:HCDT_l_1} cannot be extended to  \eqref{eq:gcdt} by a counterexample.

\begin{observation}
There is a strict local non-global minimizer of \eqref{eq:gcdt}
where the standard second-order sufficient optimality condition is not necessary.
\end{observation}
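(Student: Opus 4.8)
The plan is to construct an explicit counterexample in low dimension, since the claim is merely an existence statement (an \emph{Observation}, not a theorem with hypotheses), and a single well-chosen instance of \eqref{eq:gcdt} suffices. I would exploit the structural difference between \eqref{eq:gcdt} and the homogeneous \eqref{eq:HQCQP_2}: the reduction from \eqref{eq:gcdt} to \eqref{eq:HQCQP_2} used earlier in the paper requires $b_0=b_1=b_2=0$, so a genuine inhomogeneous linear term in the objective or in a constraint is exactly what breaks the equivalence. Concretely, I would look for an instance where there is a strict local non-global minimizer $x^*$ at which the active constraint gradients are linearly dependent (so LICQ fails), or at which the KKT multipliers force the Hessian of the Lagrangian to satisfy the second-order \emph{necessary} condition with a critical direction $v$ giving $v^T\nabla^2_{xx}L\,v=0$ while strictness still holds because of a favorable \emph{third}-order behavior of $q_0$ along the feasible curve.

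The key steps, in order, are as follows. First, I would fix a small dimension ($n=2$ or $n=3$) and choose $Q_0,Q_1,Q_2,b_0,b_1,b_2,c_1,c_2$ so that a candidate point $x^*$ is feasible with both constraints active. Second, I would verify the first-order KKT conditions and solve for the multipliers $\lambda_1,\lambda_2\ge 0$, checking strict complementarity and the sign of the multipliers. Third, I would compute the Hessian of the Lagrangian $G=Q_0+\lambda_1 Q_1+\lambda_2 Q_2$ and identify the critical cone $\{v:\ v^T\nabla q_1(x^*)=0,\ v^T\nabla q_2(x^*)=0\}$; the defining feature of the example is that $G$ restricted to this cone is positive \emph{semidefinite} but not positive definite, so the standard second-order sufficient condition \eqref{eq:cl_su} fails. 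Fourth, I would certify that $x^*$ is nonetheless a \emph{strict} local minimizer by a direct higher-order argument: parametrize a feasible curve $x(t)$ tangent to the degenerate critical direction $v$ (as in the curve constructions of Theorem \ref{th:HCDT_l_1}) and show that the objective strictly increases, typically by showing the relevant third derivative is nonzero or the fourth is positive. Finally, I would confirm global non-optimality by exhibiting another feasible point with strictly smaller objective value.

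The main obstacle, and the delicate part of the design, is the simultaneous balancing of three competing requirements at the single point $x^*$: the Hessian on the critical cone must be degenerate (to defeat \eqref{eq:cl_su}), yet the curve-wise higher-order expansion must be strictly positive (to secure \emph{strict} local minimality), and the multipliers must remain feasible ($\beta>0$, nonnegativity, strict complementarity) so that $x^*$ genuinely sits in the regime where Theorem \ref{th:HCDT_l_1} \emph{would} apply in the homogeneous setting. In the homogeneous case the proof of Theorem \ref{th:HCDT_l_1} showed that a degenerate critical direction of type (a) forces non-strictness, so the counterexample must engineer a nonzero odd-order term along the curve — exactly the mechanism that the inhomogeneous linear terms $b_i$ make available but that the homogeneous problem forbids. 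I expect the cleanest route is to start from a minimal homogeneous instance of \eqref{eq:HQCQP_2} with a degenerate but non-strict minimizer and then add a small linear perturbation $b_0^Tx$ that promotes it to a strict minimizer while leaving the second-order data essentially unchanged, after which I would read off the explicit data and verify all conditions by direct computation.
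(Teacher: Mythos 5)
Your strategy---exhibit an explicit low-dimensional instance of \eqref{eq:gcdt} with a strict local non-global minimizer at which the Hessian of the Lagrangian is degenerate on the critical cone---is exactly the paper's approach: the paper gives the $\R^3$ instance $\min\{x_1^2+2x_1-10(x_3+\tfrac14)^2:\ \|x\|^2\le1,\ x_3\le0\}$ with $x^*=(-1,0,0)^T$, multipliers $\alpha=0$, $\beta=5$, and degenerate direction $\bar v=(0,1,0)^T$. But for an existence claim the example \emph{is} the proof, and your proposal stops at a design brief without producing one, so as written it does not establish the statement. Two of your design choices also need correction. First, LICQ need not fail (it holds in the paper's example); the degeneracy there comes instead from the ball constraint being active with a \emph{zero} multiplier, so the Hessian of the Lagrangian picks up no curvature from it. Second, your emphasized mechanism---securing strictness via a nonzero third-order term along a feasible curve tangent to the degenerate direction---is self-defeating whenever that curve is two-sided: $\phi'(0)=\phi''(0)=0$ with $\phi'''(0)\neq0$ forces $\phi$ below $\phi(0)$ on one side, destroying local minimality (this is precisely the contradiction exploited in Case (c) of Theorem \ref{th:HCDT_l_1}). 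The paper's example instead gets strictness from an even-order effect: any feasible perturbation with $x_2\neq0$ forces $x_1>-1$ through the ball constraint, and $(x_1+1)^2$ then contributes a strictly positive fourth-order term along the degenerate direction, while the objective is strictly monotone in $x_1$ and $x_3$ on the feasible set. If you pursue your ``perturb a homogeneous degenerate instance'' route, you must therefore aim for a vanishing third-order and positive fourth-order expansion (or a genuinely one-sided feasible arc), not a nonzero odd-order term.
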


\begin{example}\label{ex:cdt}
Consider the following instance of \eqref{eq:gcdt}:
\begin{equation}\label{eq:ex_cdt}
	\begin{array}{cl}
	\min\limits_{x\in \R^3} & q_0(x):= x_1^2+2x_1-10{(x_3+\frac{1}{4})}^2\\
	\ST   &  x_1^2+x_2^2+x_3^2\le 1,\\
	& \ \ \ \ \ \ \ \ \ \ ~~~ x_3\le 0.
	\end{array}
	\end{equation}
\end{example}
We can verify that $x^*=(-1,0,0)^T$ is a strict local minimizer of \eqref{eq:ex_cdt} by the strict local monotonicity of $q_0$ with respect to $x_1$ and $x_3$ in the feasible region, respectively. Clearly,
$x^*$ is not a global  minimizer as
\[
q_0(x^*)=-1>-\frac{45}{8}=q_0((0,0,-1)^T).
\]
That is, $x^*$ is a strict local non-global minimizer of \eqref{eq:ex_cdt}. Elementary calculations show that  LICQ holds at $x^*$ and the Lagrangian multipliers corresponding to the two constraints are $\alpha=0$ and $\beta=5$, respectively. For $\bar v=(0,1,0)^T$, we have
\begin{eqnarray}
	{\bar v}^T
	\bmatrix -2\\~0\\~0
	\endbmatrix=
	{\bar v}^T
	\bmatrix 0\\0\\1
	\endbmatrix
	=0,\ \nonumber
	{\bar v}^T\left(
	\bmatrix 2&0&~0\\0&0&~0\\0&0&-20
	\endbmatrix
	+\alpha
	\bmatrix 2&0&0\\0&2&0\\0&0&2
	\endbmatrix
	+\beta
	\bmatrix 0&0&0\\0&0&0\\0&0&0
	\endbmatrix
	\right)
	\bar v=0
	.\nonumber
\end{eqnarray}
Thus, the standard second-order sufficient optimality condition does not hold at $x^*$.

As is well-known, \eqref{eq:trs}  has at most one local non-global minimizer \cite{JOSE94}. It follows that problem \eqref{eq:trs_hcdt} has at most two strict local non-global minimizers.  
Following is an example of \eqref{eq:HQCQP_2} with four strict local non-global minimizers.

\begin{example}\label{ex_5.4}
\footnote{This example is due to  Tianci Luo, a student of mathematics software course in Beihang University, 2021.}
Consider the following instance of \eqref{eq:HQCQP_2} in $\R^3$:
\begin{equation}\label{eq:ex4}
	\begin{array}{cl}
		\min\limits_{x\in \R^3} & q_0(x):=x_1x_2+3x_2x_3\\
		\ST  &\ \ \  x_1^2+x_2^2+x_3^2= 1,\\
		& -20x_1^2+10x_3^2=1.
	\end{array}
\end{equation}
\end{example}
One can verify that $\pm x^I(t_1)$ and $\pm  x^{II}(t_2)$ are all the four strict local non-global minimizers and
$x^{II}(t_3)$ is the global minimizer, where  
\begin{eqnarray}
&&x^I(t)=(t, \sqrt{0.9-3t^2}, \sqrt{0.1+2t^2}),~x^{II}(t)=(t, -\sqrt{0.9-3t^2}, \sqrt{0.1+2t^2}),\\
&&t_1=-0.6640,~t_2=-0.3394, ~t_3=0.3611.
\end{eqnarray}

\begin{conjecture}  \eqref{eq:HQCQP_2} has at most four strict local non-global minimizers.
\end{conjecture}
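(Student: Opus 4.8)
The plan is to reduce the two--parameter KKT system to a scalar secular equation and to bound its admissible roots, in the spirit of Mart\'{i}nez's and Wang--Xia's treatment of \eqref{eq:trs}. By Polyak's assumption \eqref{eq:PolyakA2} the matrices $A_1,A_2$ are simultaneously diagonalizable by congruence, so after a linear change of variables I would assume $A_1=\diag(\gamma_1,\dots,\gamma_n)$ and $A_2=\diag(\lambda_1,\dots,\lambda_n)$; moreover, since \eqref{eq:PolyakA2} provides a positive definite member of the pencil, I may replace $(A_1,A_2)$ by a basis of $\mathrm{span}\{A_1,A_2\}$ one of whose elements is positive definite (this only re-parametrizes $(\alpha,\beta)$ and leaves $G:=A_0+\alpha A_1+\beta A_2$ unchanged). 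By Theorems \ref{th:HCDT_l1}, \ref{th:HCDT_l} and \ref{th:HCDT_l_1}, every strict local non-global minimizer $x^*$ is characterized by multipliers $\alpha\in\R,\ \beta>0$ with $Gx^*=0$, $q_1(x^*)=q_2(x^*)=1$, and $G$ of inertia $(n-2,1,1)$ --- that is, its smallest eigenvalue is negative, its second smallest vanishes, and $\ker G=\mathrm{span}\{x^*\}$. Since the map $x^*\mapsto -x^*$ preserves all of these conditions, it suffices to bound the number of such $x^*$ up to sign by two.

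Next I would eliminate one multiplier. Treating $\beta$ as the free parameter, the stationarity $Gx=0$ reads $(A_0+\beta A_2)x=-\alpha A_1x$, so after the above reduction $-\alpha$ is a genuine generalized eigenvalue of the pencil $(A_0+\beta A_2,\,A_1)$ with eigenvector $x=x(\beta)$. The inertia requirement $(n-2,1,1)$ singles out a distinguished branch $\alpha=\alpha(\beta)$, namely the one on which the second smallest eigenvalue of $G(\alpha(\beta),\beta)$ equals $0$ while exactly one eigenvalue stays negative. Fixing the scale of $x(\beta)$ by $q_1(x(\beta))=1$, the activity/feasibility conditions then collapse to a single equation $\psi(\beta):=q_2(x(\beta))-1=0$ in the one variable $\beta>0$, restricted to the range on which the branch keeps inertia $(n-2,1,1)$.

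The decisive step is to show that $\psi$ has at most two roots on this admissible range, which would give at most two sign-classes of strict local non-global minimizers, hence at most four. Following the \eqref{eq:trs} paradigm, I would attempt to prove that the admissible range is a single interval bounded by two consecutive generalized eigenvalues of the pencil $(A_1,A_2)$, and that a suitable monotone reparametrization of $\psi$ is strictly convex (or has a sign-definite second derivative) on that interval, so that $\psi=0$ has at most two solutions. Concretely I would compute $\psi'(\beta)$ and $\psi''(\beta)$ by first-order perturbation of the eigenpair $(\alpha(\beta),x(\beta))$, expressing $\dot x(\beta)$ through the reduced resolvent of $G$ on the orthogonal complement of $\ker G$, and then try to read off convexity from the resulting expression.

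The hard part will be exactly this root-counting estimate. For \eqref{eq:trs} the null vector is explicit, $x=-(Q+\mu I)^{-1}b$, so the secular function is a sum of simple rational terms whose convexity is transparent; here the projection onto the intersection of the two quadrics has no closed form, the branch $x(\beta)$ is only accessible implicitly, and its second-order behavior is governed by the uncontrolled off-diagonal structure of $A_0$. Two further obstacles compound this: eigenvalue crossings can split the inertia stratum $(n-2,1,1)$ into several subintervals, each potentially contributing roots, and one must also check that no admissible root is lost at endpoints where $\ker G$ becomes two-dimensional or where LICQ degenerates (the regime excluded by Theorem \ref{th:LICQ} and analyzed through Corollary \ref{re:nonst}). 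Proving that the crossings' total contribution still does not exceed two roots up to sign is the crux that keeps the statement a conjecture; since the examples in Section \ref{sec:AppDis} attain the value four, any successful argument must be tight and cannot afford to lose a factor anywhere.
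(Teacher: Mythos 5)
This statement is labelled a \emph{conjecture} in the paper: the authors offer no proof, and Example \ref{ex_5.4} only shows the bound four is attained. Your proposal does not close that gap --- it is a research plan whose decisive step you yourself defer. The entire content of the conjecture is concentrated in the claim that the secular function $\psi(\beta)=q_2(x(\beta))-1$ has at most two admissible roots, and you offer no argument for it beyond the hope that a ``suitable monotone reparametrization'' is convex; as you note, the convexity that makes this work for \eqref{eq:trs} comes from the explicit rational form of $x=-(Q+\mu I)^{-1}b$, which has no analogue here. Announcing that the hard part is hard is not a proof, so the statement remains exactly as open after your argument as before it.

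There are also two concrete overstatements in the setup that would need repair even if the root count were established. First, you assert that Theorems \ref{th:HCDT_l1}, \ref{th:HCDT_l} and \ref{th:HCDT_l_1} force $G=A_0+\alpha A_1+\beta A_2$ to have inertia $(n-2,1,1)$ with $\ker G=\mathrm{span}\{x^*\}$. They do not: Theorem \ref{th:HCDT_l} gives exactly one negative eigenvalue, and \eqref{eq:cl_su} gives positivity of $v^TGv$ only on the codimension-two subspace $\{v:\ v^TA_1x^*=v^TA_2x^*=0\}$, which does not preclude a kernel of dimension two (a kernel vector $v$ with $v^TA_1x^*\neq 0$ is not covered by \eqref{eq:cl_su}). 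Second, the ``distinguished branch'' $\alpha(\beta)$ is not known to be single-valued: for fixed $\beta$ there may be several $\alpha$ for which the second smallest eigenvalue of $G$ vanishes while one eigenvalue stays negative, since the eigenvalues of $G$ need not be monotone in $\alpha$ unless $A_1$ itself is definite (which \eqref{eq:PolyakA2} does not guarantee, and your basis change alters which multiplier is eliminated, not this difficulty). Either failure multiplies the number of branches and destroys the count of ``at most two roots up to sign'' before the convexity question is even reached.
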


The final important observation is established by the other example.
\begin{observation}
\eqref{eq:HQCQP_2} may have a non-strict local minimizer, which implies that
it may have infinitely many local non-global minimizers.
\end{observation}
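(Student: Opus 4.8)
The plan is to exhibit a single explicit instance of \eqref{eq:HQCQP_2} in $\R^3$ that carries a non-strict local non-global minimizer, and then to show that non-strictness automatically propagates to a whole arc of such minimizers. For the instance I would keep the first constraint as the unit sphere, taking $A_1=I$, and set $A_2=\diag(2,1,0)$. Then the active manifold $\{x:q_1(x)=q_2(x)=1\}$ splits into the two planar ellipses $C_\pm=\{x:x_3=\pm x_1,\ 2x_1^2+x_2^2=1\}$, while the feasible region of \eqref{eq:HQCQP_2} is the spherical cap $\{x:x^Tx=1,\ x_1^2\le x_3^2\}$. Polyak's assumptions \eqref{eq:PolyakA2}-\eqref{eq:PolyakA1} hold trivially (take $\mu=(1,0)^T$ and $x^0=(0,0,1)^T$). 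Writing $A_0=(\alpha_{ij})$, the objective would be chosen so that $q_0$ is constant along the component $C_+$; a direct expansion shows this is equivalent to the two linear conditions $\alpha_{12}+\alpha_{23}=0$ and $\alpha_{11}+\alpha_{33}+2\alpha_{13}=2\alpha_{22}$.

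Next I would fix the sign of the inequality multiplier, which is the delicate point. Writing the KKT system at a candidate $x^*\in C_+$ and projecting $\nabla q_0$ onto the tangent space of the sphere, the constancy conditions already force the tangential gradient to be parallel to the direction along which $q_2$ decreases, and one computes the multiplier to be $\beta=\alpha_{22}-\alpha_{11}-\alpha_{13}$. Hence one must impose $\alpha_{22}>\alpha_{11}+\alpha_{13}$ to obtain $\beta>0$; the naive first guess gives $\beta=0$ and a spurious local \emph{maximum}, which is the trap to avoid. A clean choice is $q_0(x)=-x_1^2+x_2^2+x_3^2+2x_1x_3$ (that is, $\alpha_{11}=-1$, $\alpha_{22}=\alpha_{33}=\alpha_{13}=1$, remaining entries zero), which yields $\alpha=-2$, $\beta=1$, gives $q_0\equiv1$ on $C_+$, and gives $q_0=1-4x_1^2$ on $C_-$ with minimum $-1$; thus $C_+$ is non-global.

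Then I would confirm that $x^*=(1/\sqrt2,0,1/\sqrt2)^T$ is a genuine local minimizer and is non-strict. Substituting $x_2^2=1-x_1^2-x_3^2$ reduces $q_0$ on the sphere to $1-2x_1(x_1-x_3)$; on the feasible cap $x_1^2\le x_3^2$, so near $x^*$ the product $x_1(x_1-x_3)\le0$ and $q_0\ge1$, with equality exactly on $C_+$. With $G=A_0+\alpha A_1+\beta A_2$ one checks $Gx^*=0$, that $G$ has a single negative eigenvalue (consistent with Theorem \ref{th:HCDT_l}), and that the critical direction $\bar v=(0,1,0)^T$ satisfies $\bar v^TA_1x^*=\bar v^TA_2x^*=0$, $\bar v^TG\bar v=0$, and $\bar v^T(A_2-A_1)\bar v=0$. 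By Corollary \ref{re:nonst} this certifies that $x^*$ is a non-strict local non-global minimizer.

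Finally, to pass from one such point to infinitely many, I would invoke the curve of Case (a) in the proof of Theorem \ref{th:HCDT_l_1}: for the certificate $\bar v$, the arc $x(t)=(x^*+t\bar v)/\sqrt{(x^*+t\bar v)^TA_1(x^*+t\bar v)}$ stays feasible with $q_1(x(t))=q_2(x(t))=1$ and keeps $q_0(x(t))\equiv q_0(x^*)$. Since $x^*$ is a local minimizer, for each small $t$ the point $x(t)$ inherits local minimality (its objective value equals the local minimum attained on a neighborhood contained in that of $x^*$), so the whole arc, which here is precisely the part of $C_+$ with $x_1\neq0$, consists of local non-global minimizers. I expect the only real obstacle to be the construction in the first two steps: arranging $A_0$ so that $q_0$ is flat along $C_+$ while curving strictly upward into the interior (the $\beta>0$ sign) and simultaneously staying non-global. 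Once an instance with the correct sign is found, both the local-minimality verification and the arc argument are routine.
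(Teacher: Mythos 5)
Your proposal is correct and follows essentially the same strategy as the paper: exhibit an explicit instance in $\R^3$ whose objective is constant along a curve contained in the active manifold, so that a whole arc of non-strict local non-global minimizers appears (the paper uses $q_0=-\sqrt{2}x_1^2+x_1x_2$ with constraints $x^Tx=1$ and $2x_1^2+\tfrac12 x_2^2+x_3^2=1$, where $q_0\equiv 0$ on one of the two feasible circles). Your instance and verification (the reduction $q_0=1-2x_1(x_1-x_3)$ on the sphere, the multiplier $\beta=1>0$, and the certificate $\bar v=(0,1,0)^T$ via Corollary \ref{re:nonst}) all check out.
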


\begin{example}\label{ex:3}
Consider the following instance of \eqref{eq:HQCQP_2} in $\R^3$:
	\begin{equation}\label{eq:ex3}
	\begin{array}{cl}
	\min\limits_{x\in \R^3} & q_0(x):=-\sqrt{2}x_1^2+x_1x_2\\
	\ST  &\ \ \  x_1^2+x_2^2+x_3^2= 1,\\
	& 2x_1^2+\frac{1}{2}x_2^2+x_3^2=1.
	\end{array}
	\end{equation}
\end{example}
The feasible region is a union of the following two circles:
\begin{eqnarray*}
&&C_1=\left\{\frac{1}{\sqrt{6+t^2}}( \sqrt{2}, 2, t)^T:~ t \in\R\cup\infty\right\}\bigcup\left\{\frac{1}{\sqrt{6+t^2}}(-\sqrt{2}, -2, t)^T:~t \in\R\cup\infty\right\}, \\
&& C_2=\left\{\frac{1}{\sqrt{6+t^2}}( -\sqrt{2}, 2, t)^T:~ t \in\R\cup\infty\right\}\bigcup\left\{\frac{1}{\sqrt{6+t^2}}(\sqrt{2},-2, t)^T:~ t \in\R\cup\infty\right\}.
\end{eqnarray*}
Moreover, these two circles intersect at $(0, 0, \pm 1)^T$ with $t=\infty$.

Elementary calculations show that $q_0(x(t))=0$ for  $x(t)\in C_1$ and $q_0(x(t))$ is strictly increasing with $|t|$ for $x(t)\in C_2$. Then $\pm(1/ {\sqrt{3}}, -\sqrt{2}/{\sqrt{3}},0)^T$ are two global minimizers of \eqref{eq:ex3} with the global minimum  $-2\sqrt{2}/3$, and $(0, 0, \pm 1)^T$ are the maximizers over  $C_2$. Therefore, any point in $C_1\setminus\{(0, 0, \pm 1)^T\}$ is a local non-global minimizer of \eqref{eq:ex3}.

Combining Corollary \ref{re:nonst} with Example \ref{ex:3} leads to a conjecture on the necessary and sufficient condition for non-strict local non-global minimizers of \eqref{eq:HQCQP_2}.

\begin{conjecture}
For any non-strict local non-global minimizer of \eqref{eq:HQCQP_2},  the following necessary optimality condition is sufficient:

(i) There exist $\alpha\in\R$ and $\beta>0$ such that \eqref{eq:global_grad}-\eqref{eq:ghx} and \eqref{eq:cl_ne} hold.

(ii) There exists a nonzero vector $\bar v$ satisfying \eqref{eq:fbarv}-\eqref{eq:lqv}.
\end{conjecture}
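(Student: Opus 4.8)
The plan is to establish the missing sufficiency direction: assuming $x^*$ is feasible with $q_1(x^*)=q_2(x^*)=1$ and satisfies (i) and (ii) (together with $A_0+\alpha A_1+\beta A_2\nsucceq0$, which is what separates the non-global case), show that $x^*$ is a local minimizer. Its non-strictness then follows at once from Corollary \ref{re:nonst}, and its non-globality from Theorem \ref{le:HCDT_g2} combined with the uniqueness of the multipliers $\alpha,\beta$ guaranteed by LICQ (Theorem \ref{th:LICQ}): since the multipliers are unique, $G:=A_0+\alpha A_1+\beta A_2$ is determined, and $G\nsucceq0$ rules out global optimality. Thus everything reduces to proving local minimality.

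The key device I would use is an exact identity. Writing $G=A_0+\alpha A_1+\beta A_2$ and using $Gx^*=0$ from \eqref{eq:global_grad} together with $q_1(x^*)=q_2(x^*)=1$, one checks that for every $x$ on the surface $q_1(x)=1$,
\[
q_0(x)-q_0(x^*)=(x-x^*)^TG(x-x^*)+\beta\bigl(1-q_2(x)\bigr).
\]
On the feasible set the second term is nonnegative because $\beta>0$ and $q_2(x)\le1$. Setting $h=x-x^*$, for points moving strictly into the interior along a first-order transverse direction ($h^TA_2x^*<0$), the term $\beta(1-q_2(x))=-2\beta\,h^TA_2x^*+O(|h|^2)$ is first-order positive and dominates the second-order quantity $h^TGh$, so $q_0(x)>q_0(x^*)$. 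The difficulty therefore concentrates on the directions tangent to $q_2=1$, in particular on the equality manifold $M=\{x:q_1(x)=1,\ q_2(x)=1\}$, where the identity collapses to $q_0(x)-q_0(x^*)=(x-x^*)^TG(x-x^*)$ and it remains to prove this is nonnegative for $x\in M$ near $x^*$.

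To analyze $M$, I would use the simultaneous diagonalization of $A_1,A_2$ afforded by \eqref{eq:PolyakA2} and parametrize $M$ near $x^*$ by the tangent space $T=\{v:v^TA_1x^*=0,\ v^TA_2x^*=0\}$, with the transverse motion being $O(|p|^2)$ in the tangential parameter $p\in T$ via the implicit function theorem. The leading term of $(x-x^*)^TG(x-x^*)$ is then $p^TGp$, which is nonnegative on $T$ by \eqref{eq:cl_ne}. Decomposing $T=D\oplus D^{\perp_G}$, where $D=\ker(W^TGW)$ is the degenerate cone and $W$ spans $T$, the component along $D^{\perp_G}$ gives a strictly positive second-order term; moreover $\bar v^TGp=0$ for all $p\in T$ whenever $\bar v\in D$, so the cross terms vanish. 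Along the degenerate directions the second-order term is zero and higher-order information is required; here I would recycle the curve constructions of Theorem \ref{th:HCDT_l_1}, in which a degenerate direction satisfying the Case-(a) tangency \eqref{eq:lqv} yields an explicit curve in $M$ along which $q_0$, hence $(x-x^*)^TG(x-x^*)$, is identically constant, whereas a degenerate direction violating \eqref{eq:lqv} (Cases (b)/(c)) produces a nearby point of $M$ with strictly smaller objective.

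The hard part is precisely to pass from the single flat direction granted by (ii) to control of the entire degenerate cone $D$: local minimality on $M$ needs \emph{every} direction of $D$ to be of Case-(a) type, while (ii) supplies only one, and the two demands coincide exactly when $\dim D=1$. Showing that the hypotheses force $\dim D=1$ — or, more weakly, that a Case-(a) degenerate direction cannot coexist with a Case-(b)/(c) one once $G$ has a single negative eigenvalue (Theorem \ref{th:HCDT_l}) — is the crux, and it is genuinely delicate for $n>3$ because $\dim T=n-2\ge2$ there, so the one-dimensionality implicitly exploited in the $n=3$ instances of Examples \ref{ex_5.4} and \ref{ex:3} is no longer automatic. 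Even granting $\dim D=1$, one must still make the higher-order expansion of $(x-x^*)^TG(x-x^*)$ over the full $(n-2)$-dimensional manifold $M$ uniform, without any closed-form parametrization of the intersection of the two quadrics; a Morse-lemma-type normal form along the flat direction, combined with strict positivity transverse to $D$, is the natural route, but ensuring its uniformity is where I expect the argument to stall, and is the reason the statement stands as a conjecture rather than a theorem.
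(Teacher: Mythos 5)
The statement you are addressing is stated in the paper as a \emph{conjecture}: the paper contains no proof of the sufficiency direction (only the necessity, established via the proof of Theorem \ref{th:HCDT_l_1} and Corollary \ref{re:nonst}), so there is no proof of record to compare your attempt against; the question is only whether your sketch closes the open gap, and by your own admission it does not. Your setup is sound: the identity $q_0(x)-q_0(x^*)=(x-x^*)^TG(x-x^*)+\beta\bigl(1-q_2(x)\bigr)$ on the surface $q_1(x)=1$ is correct (it follows from $Gx^*=0$ and $q_1(x^*)=q_2(x^*)=1$), the reduction to the degenerate cone $D=\ker(W^TGW)$ of the tangent space is the right framing, and you correctly observe that the conjecture as literally stated needs a hypothesis such as $G\nsucceq0$ to rule out global optimality, since (i)--(ii) alone do not exclude $G\succeq0$.

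The decisive gap is the one you name yourself and then leave unresolved: hypothesis (ii) supplies a \emph{single} degenerate direction of Case-(a) type, i.e.\ satisfying \eqref{eq:lqv}, whereas local minimality requires that no direction in $D$ be of Case-(b)/(c) type --- otherwise the curve constructions in the proof of Theorem \ref{th:HCDT_l_1} yield feasible points arbitrarily close to $x^*$ with strictly smaller objective. Nothing in (i)--(ii) visibly forces $\dim D=1$, nor forces every element of $D$ to satisfy \eqref{eq:lqv}, and you give no argument for either; moreover the Case-(b) construction you propose to ``recycle'' relies on $G$ having exactly one negative eigenvalue (Theorem \ref{th:HCDT_l}), which is a \emph{consequence} of local minimality and hence unavailable in a sufficiency proof. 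A second, quieter gap: in the regime where $h^TGh$ is only $O(|h|^2)$-nonnegative on the tangent cone and $h^TA_2x^*$ is negative but itself of order $|h|^2$, neither term in your identity dominates the other, so the claim that the first-order transverse term absorbs the second-order one needs a cone decomposition you do not supply. In short, your proposal is a correct diagnosis of why the statement remains a conjecture, not a proof of it.
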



\section{Conclusions}
Global solutions of (QQm) with $m\in\{1,2\}$, the problem of minimizing a nonconvex quadratic form function over $m$ quadratic form constraints, have been completely characterized in literature. We study local optimality conditions of (QQm). First, any local minimizer of (QQ1) is  globally optimal. It implies that both the Rayleigh quotient problem and the total least squares problem have no local non-global minimizers. As is known, the trust region subproblem is a special case of \eqref{eq:HQCQP_2} with at most one local non-global minimizer. However, in general, \eqref{eq:HQCQP_2} could have infinitely many local non-global minimizers. At any local non-global minimizer of \eqref{eq:HQCQP_2}, both LICQ and strict complementary condition hold, and the Hessian of the Lagrangian has exactly one negative eigenvalue. Our main result is to prove that the standard second-order sufficient optimality condition is necessary at any strict local non-global minimizer of \eqref{eq:HQCQP_2}, which partially gives a negative answer to Peng and Yuan's open question asking whether there are sufficient conditions that are weaker than the standard second-order sufficient optimality condition for \eqref{eq:gcdt}.
As an application, any strict local non-global minimizer of the ellipsoid regularized total least squares problem has a necessary and sufficient optimality condition. Further extensions seem to be impossible as the standard second-order sufficient condition is no longer necessary for the strict local non-global minimizer of \eqref{eq:gcdt} and the strict global minimizer of \eqref{eq:HQCQP_2}. Two conjectures are made on overestimating the number of strict local non-global minimizers and establishing a necessary and sufficient optimality condition for the non-strict local non-global minimizer of \eqref{eq:HQCQP_2}.

\bibliographystyle{siamplain}
\bibliography{references}

\end{document}